\title[Tensor categories of affine Lie algebras beyond admissible levels]{{\bf Tensor categories of affine Lie algebras\\ beyond admissible levels}}
\thanks{We would like to thank Drazen Adamovi$\acute{\mbox{c}}$, Naoki Genra, Yi-Zhi Huang, Shashank Kanade, Robert McRae, Antun Milas and David Ridout for useful discussions. We also thank the referee for their valuable comments and suggestions. T. C is supported by NSERC $\#$RES0020460.}
\author{Thomas Creutzig}
\address{(T. Creutzig) Department of Mathematical and Statistical Sciences, University of Alberta, 632 CAB, Edmonton, Alberta, Canada T6G 2G1.}
\email{creutzig@ualberta.ca}
\author{Jinwei Yang}
\address{(J. Yang) Department of Mathematical and Statistical Sciences, University of Alberta, 632 CAB, Edmonton, Alberta, Canada T6G 2G1.}
\email{jinwei2@ualberta.ca}
\numberwithin{equation}{section}
\theoremstyle{plain}
\newtheorem{theorem}[subsubsection]{Theorem}
\newtheorem{lemma}[subsubsection]{Lemma}
\newtheorem{prop}[subsubsection]{Proposition}
\newtheorem{cor}[subsubsection]{Corollary}
\newtheorem{conj}[subsubsection]{Conjecture}
\theoremstyle{definition}
\newtheorem{definition}[subsubsection]{Definition}
\newtheorem{remark}[subsubsection]{Remark}
\newtheorem{exam}[subsubsection]{Example}
\newtheorem{conjecture}[subsubsection]{Conjecture}
\def\CC{\mathbb{C}}
\def\LL{\mathbb{L}}
\def\NN{\mathbb{N}}
\def\QQ{\mathbb{Q}}
\def\ZZ{\mathbb{Z}}
\newcommand\cA{\mathcal{A}}
\newcommand\cB{\mathcal{B}}
\newcommand\cC{\mathcal{C}}
\newcommand\cF{\mathcal{F}}
\newcommand\cG{\mathcal{G}}
\newcommand\cM{\mathcal{M}}
\newcommand\cO{\mathcal{O}}
\newcommand\cR{\mathcal{R}}
\newcommand\cU{\mathcal{U}}
\newcommand\cV{\mathcal{V}}
\newcommand\cW{\mathcal{W}}
\newcommand\frg{\mathfrak{g}}
\newcommand\frh{\mathfrak{h}}
\newcommand\id{\textup{id}}
\newcommand{\Ind}{\textup{Ind}}
\newcommand\Span{\textup{Span}}
\newcommand{\tr}{\textup{tr}}
\newcommand{\wt}{\textup{wt}\;}
\newcommand\Hom{\textup{Hom}}
\newcommand{\Ext}{\textup{Ext}}
\newcommand\gl{\mathfrak{gl}}
\renewcommand\sl{\mathfrak{sl}}
\newcommand\so{\mathfrak{so}}
\newcommand{\btimes}{\boxtimes}
\newcommand{\jiao}[1]{\langle{#1}\rangle}
\newcommand\quash[1]{}
\newcommand{\ov}{\overline}
\newcommand\xr{\xrightarrow}
\newcommand\one{\mathbf{1}}
\renewcommand\a\alpha
\renewcommand\b\beta
\newcommand\g\gamma
\renewcommand\d\delta
\newcommand\D\Delta
\renewcommand{\l}{\lambda}
\newcommand{\om}{\omega}
\newlength{\@pxlwd} \newlength{\@rulewd} \newlength{\@pxlht}
\def\sprite#1(#2,#3)[#4,#5]{
   \edef\@sprbox{\expandafter\@cdr\string#1\@nil @box}
   \expandafter\newsavebox\csname\@sprbox\endcsname
   \edef#1{\expandafter\usebox\csname\@sprbox\endcsname}
   \expandafter\setbox\csname\@sprbox\endcsname =\hbox\bgroup
   \vbox\bgroup
  \catcode`.=\active\catcode`B=\active\catcode`:=\active\catcode`|=\active
      \@pxlwd=#4 \divide\@pxlwd by #3 \@rulewd=\@pxlwd
      \@pxlht=#5 \divide\@pxlht by #2
      \def .{\hskip \@pxlwd \ignorespaces}
      \def B{\@ifnextchar B{\advance\@rulewd by \@pxlwd}{\vrule
         height \@pxlht width \@rulewd depth 0 pt \@rulewd=\@pxlwd}}
      \def :{\hbox\bgroup\vrule height \@pxlht width 0pt depth
0pt\ignorespaces}
      \def |{\vrule height \@pxlht width 0pt depth 0pt\egroup
         \prevdepth= -1000 pt}
   }
\def\endsprite{\egroup\egroup}
\def\hboxtr{\FormOfHboxtr} 
\begin{document}

\begin{abstract}
We show that if $V$ is a vertex operator algebra such that all the irreducible ordinary $V$-modules are $C_1$-cofinite and all the grading-restricted generalized Verma modules for $V$ are of finite length, then the category of finite length generalized $V$-modules has a braided tensor category structure.

By applying the general theorem to the simple affine vertex operator algebra (resp. superalgebra) associated to a finite simple Lie algebra (resp. Lie superalgebra) $\frg$ at level $k$ and the category $KL_k(\frg)$ of its finite length generalized modules,
we discover several families of $KL_k(\frg)$ at non-admissible levels $k$, having braided tensor category structures. In particular, $KL_k(\frg)$ has a braided tensor category structure if the category of ordinary modules is semisimple or more generally if the category of ordinary modules is of finite length.

We also prove the rigidity and determine the fusion rules of some categories $KL_k(\frg)$, including the category $KL_{-1}(\sl_n)$. Using these results, we construct a rigid tensor category structure on a full subcategory of $KL_1(\mathfrak{sl}(n|m))$ consisting of objects with semisimple Cartan subalgebra actions.
\end{abstract}

\maketitle
\section{Introduction}
\subsection{Tensor categories of affine Lie algebras}
Tensor category structures on the representation categories of affine Lie algebras, more generally, of vertex operator algebras have been studied since the late 1980s. In the work of physicists (\cite{BPZ, KZ, MS1, MS2}), it has been realized that the category of integrable modules for an affine Lie algebra at a fixed positive integral level should have the structure of a rigid braided tensor category. Kazhdan and Lusztig were the first to rigorously construct a rigid tensor category structure on the category of finite length modules whose composition factors are simple highest weight modules with dominant weights for affine Lie algebras in the case that the level plus the dual Coxeter number is not a positive rational number \cite{KL1}--\cite{KL5}. Then, several works, including those of Beilinson, Feigin and Mazur \cite{BFM} and Huang and Lepowsky \cite{HL7}, gave the constructions of braided tensor categories at positive integral levels; these are in fact shown to be rigid and even modular tensor categories \cite{H4}. Braided tensor category structure on the category $KL_k$ at an admissible level $k$ was recently found by Huang together with the two authors \cite{CHY} and then the rigidity could also be proven in the case that the Lie algebra is simply-laced \cite{C2}. There are however levels that are neither generic nor admissible and in these cases not much is known about the structure of $KL_k$, especially the existence of braided tensor category structure is open. We refer to \cite{H7} for a brief review of vertex-operator-algebraic constructions of various tensor category structures on representation categories for affine Lie algebras. In this work, we partially solved Problem 4.12 there.

\subsection{Main results}
In this paper, we prove that if $V$ is a vertex operator algebra whose irreducible ordinary modules are all {\em $C_1$-cofinite} and whose grading-restricted generalized Verma modules are of finite length, then there is a braided tensor category structure on the category of finite length generalized $V$-modules. Our constructions are based on the tensor category theory of vertex operator algebras developed in \cite{HL1}-\cite{HL7} by Huang, Lepowsky and its logarithmic generalization \cite{HLZ0}-\cite{HLZ8} by Huang, Lepowsky and Zhang. The notion of $C_1$-cofinite modules plays a crucial role in the applicability of this theory. We show that under the above assumption, the category of lower bounded $C_1$-cofinite modules is the same as the category of finite length generalized modules (Theorem \ref{maintheorem}). Using this equivalence, we prove the existence of the braided tensor category structure (Theorem \ref{tensorcategory}). We then apply the general results to affine vertex operator algebras. In particular, if any generalized Verma module for a simple (or any other homomorphic image of the universal) affine vertex operator algebra is of finite length, then the category $KL_k$ of finite length generalized modules has a braided tensor category structure (Theorem \ref{mainresult}).

Our results provide a universal way to construct braided tensor categories for affine Lie algebras at various levels. It not only applies to the module categories studied by Kazhdan and Lusztig when the level plus the dual Coxeter number is non-positive rational and to the categories of ordinary modules at admissible levels (including the positive integral levels), but also gives tensor category structures at some other rational levels that are neither generic nor admissible. In particular, if the category of ordinary modules is semisimple, then $KL_k$ has a braided tensor category structure (Corollary \ref{main}).

\subsection{Applications}
There are three families of not necessarily admissible levels at which the categories $KL_k$ can be shown to have braided tensor category structures:
\begin{enumerate}
\item Collapsing levels for the minimal $W$-algebras $\cW_k(\frg, \theta)$ (Section \ref{sec:collaps});
\item Positive rational but generic levels (Section \ref{sec:generic});
\item The levels $k$ such that the ordinary modules for $\cW_k(\mathfrak{g}, \theta)$ are all of finite length (Section \ref{sec:minimal}).
\end{enumerate}
The first and second families have been studied in \cite{AKMPP1}-\cite{AKMPP2} and \cite{C1}, respectively. In both cases the categories of ordinary modules are semisimple. The third family provides a few interesting examples of not necessarily semisimple categories of ordinary modules via minimal $W$-algebras $\cW_k(\frg, \theta)$ (Theorem \ref{thm:minimal}), particularly when $\cW_k(\frg, \theta)$ is $C_2$-cofinite (Corollary \ref{cor:walgebra}) or when the category of ordinary modules for $\cW_k(\frg, \theta)$ is semisimple (Corollary \ref{semisimplicity}). Examples of these type beyond admissible levels have been obtained in \cite{ArM1, ArM2} and \cite{Ka} (see Example \ref{exam:ArM}).

A further example comes from Theorem \ref{CL}, recently proved by the first author and Linshaw \cite{CL}, that implies that the category of ordinary modules for $\cW_{-2}(\mathfrak{sl}_n, \theta)$, $n\geq 6$ is semisimple and hence $KL_{-2}(\sl_n)$ is a braided tensor category (Theorem \ref{cor:beyondcollapsing}).

As a consequence of our main result, there are braided tensor category structures on the categories $KL_k(\frg)$ at these cases. Here is a list of pairs $(\frg, k)$ beyond admissible levels for which $KL_k(\frg)$ is braided tensor:
\begin{enumerate}
\item $\frg=A_1$, $k=-2+\frac{1}{p}$ and $p\in \mathbb Z_{\geq 1}$.
\item $\frg=A_\ell$, $\ell \geq 1$ and $k = -1$.
\item $\frg=A_\ell$, $\ell \geq 3$ and $k = -2$.
\item $\frg=A_{2\ell-1}$, $\ell \geq 2$ and $k = -\ell$.
\item $\frg=B_\ell$, $\ell \geq 2$ and $k = -2$.
\item $\frg=C_{\ell}$,  and $k = -1-\ell/2$.
\item $\frg=D_\ell$, $\ell \geq 3$ and $k \in \{ -2, -1\}$.
\item $\frg=D_{2\ell-1}$, $\ell \geq 3$ and $k = -2\ell+3$.
\item $\frg=E_6$ and $k \in \{ -4, -3, -2, -1\}$.
\item $\frg=E_7$ and $k \in \{ -6, -4, -3, -2, -1\}$.
\item $\frg=E_8$ and $k \in \{-6, -5, -4, -3, -2, -1\}$.
\item $\frg=F_4$ and $k = -3$.
\item $\frg=G_2$ and $k = -1$.
\end{enumerate}

\subsection{Rigidity}
A major open problem in tensor category theory is rigidity and for this two strategies have been employed. In \cite{C2}, the coset realization of principal $W$-algebras of simply-laced Lie algebras \cite{ACL} was used to prove an equivalence of braided tensor categories between $KL_k$ at admissible level $k$ and certain fusion subcategories of the corresponding principal $W$-algebras. Especially the category $KL_k$ is then rigid as well.

A second strategy is to look for certain abelian intertwining algebras $V$ with compact automorphism group $G$, such that $V^G$ is an affine vertex algebra. Due to McRae \cite{M} (see also \cite{CM} for the case when $G$ is finite and abelian), if a certain category of $V^G$-modules has a braided tensor category structure, then the subcategory generated by the $V^G$-modules showing up in the decomposition of $V$ as a $V^G \times G$-module is a braided tensor category equivalent to an abelian $3$-cocycle modification of ${\rm Rep} \; G$ (we call this modification a {\em twist}). In particular, this subcategory of $V^G$-modules is rigid.

Finding such algebras is tricky and jointly with Adamovi$\acute{\mbox{c}}$ and Genra in \cite{ACGY}, we construct a family of abelian intertwining algebras denoted by $\mathcal V^{(p)}$ for $p \in \mathbb Z_{\geq 1}$, such that $(\mathcal V^{(p)})^{SU(2)} \cong L_{-2+\frac{1}{p}}(\mathfrak{sl}_2)$. It turns out that all simple objects in $KL_{-2+\frac{1}{p}}(\sl_2)$ appear in the decomposition of $\mathcal V^{(p)}$ as an $L_{-2+\frac{1}{p}}(\mathfrak{sl}_2)\times SU(2)$-module. Due to our Proposition \ref{generic}, the assumptions of \cite{M} are satisfied and hence $KL_{-2+\frac{1}{p}}(\sl_2)$ is rigid (Corollary \ref{orbten}, Example \ref{vp}, cf. \cite{ACGY}).
This result completes the construction of rigid braided tensor category structures on $KL_k(\sl_2)$ at all levels $k$ except the critical level $k = -2$.

We employ a similar strategy with $\beta\gamma^{\otimes n}$ and $G=U(1)$ to prove the rigidity of the tensor category $KL_{-1}(\sl_n)$ for the simple affine vertex operator algebra $L_{-1}(\mathfrak{sl}_n)$. We also use the conformal embeddings \cite[Thm.~5.1]{AKMPP4} to study certain subcategories of ordinary modules for $L_{-2}(\mathfrak{sl}_5)$, $L_{-2}(\mathfrak{sl}_6)$, $L_{-3}(\mathfrak{so}_{10})$ and $L_{-4}(\mathfrak{e}_6)$. In all these cases we find rigid vertex tensor categories consisting of simple currents.

Moreover, by applying the tensor category theory for the vertex operator superalgebra extensions to $L_{-1}(\sl_m) \subset L_1(\sl(n|m))$, we are able to determine the category $KL_1^{ss}(\sl(n|m))$ of finite length generalized modules with semisimple Cartan subalgebra actions for the vertex operator superalgebras $L_{1}(\mathfrak{sl}(n|m))$ when $n\neq m$ and also for $L_{1}(\mathfrak{psl}(n|n))$ and prove the rigidity.

\subsection{Outlook}
The main results of this paper hold for certain non-semisimple categories $KL_k$, for example, the category of finite length modules for affine Lie algebras at negative levels studied in \cite{KL1}-\cite{KL5} and the category $KL_k(\frg)$ with $\cW_k(\frg,\theta)$ being $C_2$-cofinite. We are aiming to study more non-semisimple such examples in the future.

One can verify that our results naturally generalize to finite length generalized modules for vertex operator superalgebras, in particular, vertex operator superalgebras associated to Lie superalgebras, the first important case to study is the category of finite length generalized modules for $L_k(\gl(1|1))$ \cite{CR1}. Recently jointly with McRae \cite{CMY2}, we establish the existence of the tensor structures on $KL_k(\gl(1|1))$ and $KL_k^{ss}(\gl(1|1))$ using the main results in this paper, and also study the detailed tensor structures there. We aim to discuss braided tensor categories of finite length generalized modules for a general affine vertex superalgebra in future work. We expect many examples of categories that are not semi-simple but of finite length.

The long-term goal is to establish vertex tensor category results beyond grading-restricted generalized modules, e.g. relaxed-highest weight modules and their spectral flow conjugates of admissible level affine vertex algebras \cite{CR2, KR1, KR2}. This is quite difficult since these categories do not satisfy nice known finiteness conditions as $C_1$-cofiniteness.

Of course one does not only want to establish the existence of tensor category, but also use it for further interesting insights. In the present work, we use the conformal embeddings of \cite{AKMPP1}--\cite{AKMPP4} in order to derive fusion rules and rigidity statements. The tensor category tool is rather useful when it restricts to the branching rules, that is, the decomposition of the larger vertex algebra in terms of modules of the subalgebra (see \cite{CKM2}). However, such decompositions are often quite difficult to determine. Jointly with Adamovi$\acute{\mbox{c}}$ and Genra, we aim to study several open decomposition problems and then use our results for new fusion rules and rigidity statements. This started with the recent work \cite{ACGY}.

In \cite{ACGY}, our results have been used to prove conjectures of \cite{C1} that the three families of vertex algebras: the $\mathcal R^{(p)}$-algebras of \cite{A}, certain $W$-algebras at boundary admissible level and certain chiral algebras coming from physics are all isomorphic. This especially solved the branching rule problem for conformal embeddings of $W$-algebras in $L_k(\mathfrak{sl}_2)$ at certain non-admissible levels $k$.
All these cases are related to $\frg=\mathfrak{sl}_2$, but in \cite{C1}, there are also conjectures for $\frg=\mathfrak{sl}_n$ and higher rank chiral algebras motivated from physics are constructed in \cite{C3}. Again it seems that $W$-algebras and affine vertex algebras at non-admissible and non-generic levels play a central role.

Another conjecture from physics concerning non-admissible level affine vertex algebras is that large coupling limits of certain corner vertex algebras appearing in $S$-duality are large extensions of $L_k(\frg)$ at level $k=-h^\vee+\frac{1}{p}$ for some positive integer $p$ (\cite{CG, CGL}). Furthermore, these vertex algebras are expected to have the compact Lie group $G$ whose Lie algebra is $\frg$ as subgroup of automorphisms. Jointly with Adamovi$\acute{\mbox{c}}$ and Genra in \cite{ACGY}, we proved this conjecture for $G=SU(2)$ and then used it together with the results of this work to determine the rigidity of the tensor category $KL_{-2+\frac{1}{p}}$ as mentioned above. For higher rank, the problem is completely open and progress would be quite exciting. Note also that $S$-duality, associated vertex algebras and their vertex tensor categories are closely connected with the quantum geometric Langlands program (\cite{CG, FG}) and the just mentioned large coupling limits are viewed as classical limits.

\section{Vertex operator algebras}
\subsection{Preliminaries on vertex operator algebras}
Let $(V, Y, {\bf 1}, \omega)$ be a vertex operator algebra. We first recall the definitions of various $V$-modules.
\begin{definition}
\begin{itemize}
\item[(1)]  A \textit{weak $V$-module} (or simply {\em $V$-module}) is a vector space $W$
equipped with a vertex operator map
$$\begin{array}{cccl}
 Y_W: & V & \rightarrow & (\mathrm{End}\,W)[[x,x^{-1}]]\\
 &   v & \mapsto & Y_W(v,x)=\sum_{n\in\mathbb{Z}} v_n\,x^{-n-1}\\
\end{array}$$
satisfying the following axioms:
\begin{itemize}
\item[(i)] The lower truncation condition: For $u\in V$, $w \in W$, $Y_W(u,x)w$ has only finitely many
terms of negative powers in $x$.

 \item[(ii)] The vacuum property:
\begin{equation*}
 Y_W(\mathbf{1},x)=1_W.
 \end{equation*}

 \item[(iii)] The Jacobi identity: for $u,v\in V$,
\begin{align*}
 x_0^{-1}&\delta\left(\dfrac{x_1-x_2}{x_0}\right) Y_W(u,x_1)Y_W(v,x_2) \\
&\quad - x_0^{-1}\delta\left(\dfrac{-x_2+x_1}{x_0}\right)Y_W(v,x_2)Y_W(u,x_1)
\nonumber\\
 & = x_2^{-1}\delta\left(\dfrac{x_1-x_0}{x_2}\right)Y_W(Y(u,x_0)u,x_2).
\end{align*}

\item[(iv)] The Virasoro algebra relations: if we write $Y_W(\omega,x)=\sum_{n\in\mathbb{Z}} L_n x^{-n-2}$,
then for any $m,n\in\mathbb{Z}$,
\begin{equation*}
 [L_m,L_n]=(m-n)L_{m+n}+\dfrac{m^3-m}{12} \delta_{m+n,0} c,
\end{equation*}
where $c$ is the central charge of $V$.

\item[(v)] The $L_{-1}$-derivative property: for any $v\in V$,
\begin{equation*}
 Y_W(L_{-1}v,x)=\dfrac{d}{dx} Y_W(v,x).
 \end{equation*}
\end{itemize}

\item[(2)] A \textit{generalized $V$-module} is a weak $V$-module $(W, Y_{W})$ with a
$\CC$-grading $W=\coprod_{n\in \CC}W_{[n]}$ such that $W_{[n]}$ for $n\in \CC$ are
generalized eigenspaces for the operator $L_{0}$ with eigenvalues $n$.

\item[(3)]
A \textit{lower-bounded generalized $V$-module} is a generalized $V$-module such that for any $n\in\mathbb{C}$, $W_{[n+m]}=0$ for $m\in\mathbb{Z}$ sufficiently negative. A \textit{grading-restricted generalized $V$-module} is a lower-bounded generalized $V$-module such that dim $W_{[n]}<\infty$ for any $n\in\mathbb{C}$.

\item[(4)] An \textit{ordinary $V$-module} is a grading-restricted
generalized $V$-module such that $W_{[n]}$ for $n\in \CC$ are
eigenspaces $W_{(n)}$ for the operator $L_{0}$ (with eigenvalues $n$).

\item[(5)]A generalized $V$-module $W$ is of {\em length $l$} if there exist generalized $V$-submodules
$W = W_1 \supset \cdots \supset W_{l+1} = 0$ such that $W_i/W_{i+1}$ for $i = 1, \dots, l$ are irreducible
ordinary $V$-modules.  A {\em finite length} generalized $V$-module is a generalized $V$-module of length $l$ for some $l \in \ZZ_{\geq 1}$.

\item[(6)]We say a category of $V$-modules is {\em semisimple} if every object is completely reducible.
\end{itemize}
\end{definition}

\begin{remark} The definition of finite length generalized module is due to Huang \cite[Definition~1.2]{H5}. We stress that a finite length module is defined to be a generalized $V$-module whose simple composition factors are ordinary modules.
\end{remark}

\begin{remark}
The following properties of finite length generalized modules are obvious (cf. \cite{H5}):
\begin{itemize}
\item[(1)]A finite length generalized module is grading-restricted.
\item[(2)]The category of finite length generalized $V$-modules is
closed under the operation of direct sum, taking generalized $V$-submodules,
quotient generalized $V$-submodules and contragredient duals.
\end{itemize}
\end{remark}

Let $V$ be a vertex operator algebra and let $(W, Y_W)$ be a lower bounded generalized $V$-module with
\[
W = \coprod_{n \in \mathbb{C}}W_{[n]}
\]
where for $n \in \mathbb{C}$, $W_{[n]}$ is the generalized weight space with weight $n$. Its {\em contragredient module} is the vector space
\[
W' = \coprod_{n \in \mathbb{C}}(W_{[n]})^*,
\]
equipped with the vertex operator map $Y'$ defined by
\[
\langle Y'(v,x)w', w \rangle = \langle w', Y^{\circ}_W(v,x)w \rangle
\]
for any $v \in V$, $w' \in W'$ and $w \in W$, where
\[
Y^{\circ}_W(v,x) = Y_W(e^{xL(1)}(-x^{-2})^{L_0}v, x^{-1}),
\]
for any $v \in V$, is the {\em opposite vertex operator} (cf. \cite{FHL}).
We also use the standard notation
\[
\overline{W} = \prod_{n \in \mathbb{C}}W_{[n]},
\]
for the formal completion of $W$ with respect to the $\mathbb{C}$-grading.

We also need the notion of logarithmic intertwining operators:
\begin{definition}\label{log:def}
Let $(W_1,Y_1)$, $(W_2,Y_2)$ and $(W_3,Y_3)$ be $V$-modules. A {\em logarithmic intertwining
operator of type ${W_3\choose W_1\,W_2}$} is a linear map
\begin{align*}
\mathcal{Y}: &W_1\otimes W_2\to W_3\{x\}[\log x] \\
&w_{(1)}\otimes w_{(2)}\mapsto{\mathcal{Y}}(w_{(1)},x)w_{(2)}=\sum_{k=0}^{K}\sum_{n\in
{\mathbb C}}{w_{(1)}}_{n, k}^{
\mathcal{Y}}w_{(2)}x^{-n-1}(\log x)^{k}
\end{align*}
for all $w_{(1)}\in W_1$ and $w_{(2)}\in W_2$, such that the
following conditions are satisfied:
\begin{itemize}
\item[(i)] The {\em lower truncation
condition}: for any $w_{(1)}\in W_1$, $w_{(2)}\in W_2$, $n\in
{\mathbb C}$ and $k=0, \dots, K$,
\begin{equation*}
{w_{(1)}}_{n+m, k}^{\mathcal{Y}}w_{(2)}=0\;\;\mbox{ for }\;m\in {\mathbb
N} \;\mbox{sufficiently large}.
\end{equation*}
\item[(ii)] The {\em Jacobi identity}:
\begin{eqnarray*}
\lefteqn{ x^{-1}_0\delta \bigg( {x_1-x_2\over x_0}\bigg)
Y_3(v,x_1){\mathcal{Y}}(w_{(1)},x_2)w_{(2)}} \\
&&\hspace{2em}- x^{-1}_0\delta \bigg( {x_2-x_1\over -x_0}\bigg)
{\mathcal{Y}}(w_{(1)},x_2)Y_2(v,x_1)w_{(2)} \\
&&{ = x^{-1}_2\delta \bigg( {x_1-x_0\over x_2}\bigg){
\mathcal{Y}}(Y_1(v,x_0)w_{(1)},x_2) w_{(2)}}
\end{eqnarray*}
for $v\in V$, $w_{(1)}\in W_1$ and $w_{(2)}\in W_2$.
\item[(iii)] The {\em $L_{-1}$-derivative property:} for any
$w_{(1)}\in W_1$,
\begin{equation*}
{\mathcal{Y}}(L_{-1}w_{(1)},x)=\frac d{dx}{\mathcal{Y}}(w_{(1)},x).
\end{equation*}
\end{itemize}
A logarithmic intertwining operator is called an {\it intertwining operator} if no $\log x$ appears in ${\mathcal{Y}}(w_{(1)},x)w_{(2)}$
for $w_{(1)}\in W_1$ and $w_{(2)}\in W_2$. The dimension of the space  $\mathcal{V}_{W_1 W_2}^{W_3}$ of all logarithmic intertwining
operators is called the {\it fusion rule}.
\end{definition}

The following finiteness condition plays an important role in this paper:
\begin{definition}
Let $V$ be a vertex algebra and let $W$ be a weak $V$-module. Let $C_1(W)$ be the subspace of $W$ spanned by elements of the form $u_{-1}w$ where $u\in V_+:=\coprod_{n\in \mathbb{Z}_{>0}} V_{(n)}$ and $w\in W$. We say that $W$ is {\it $C_1$-cofinite} if $W/C_1(W)$ is finite dimensional.
\end{definition}

The $C_1$-cofinite condition is closed under taking quotients:
\begin{lemma}[\cite{H5}] \label{qu}
Let $V$ be a vertex algebra. For a $V$-module $W_1$ and a $V$-submodule $W_2$ of $W_1$, $C_1(W_1/W_2)=(C_1(W_1)+W_2)/W_2$.	
\end{lemma}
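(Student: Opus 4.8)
The plan is to transport the generating set of $C_1(W_1)$ through the canonical projection and check that it lands precisely on the generating set of $C_1(W_1/W_2)$. Write $\pi\colon W_1 \to W_1/W_2$ for the quotient map, which is a surjective homomorphism of weak $V$-modules; in particular the action on the quotient satisfies $u_n \pi(w) = \pi(u_n w)$ for all $u \in V$, $w \in W_1$ and $n \in \ZZ$. Everything will follow from the compatibility of this action with $\pi$ together with elementary linear algebra, so the proof is essentially a bookkeeping of generators.

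First I would establish the identity $C_1(W_1/W_2) = \pi(C_1(W_1))$ as subspaces of $W_1/W_2$. By definition $C_1(W_1/W_2)$ is spanned by elements $u_{-1}\bar{w}$ with $u \in V_+$ and $\bar{w} \in W_1/W_2$, while $C_1(W_1)$ is spanned by the elements $u_{-1}w$ with $u \in V_+$ and $w \in W_1$. Since $\pi$ is surjective, every $\bar{w}$ equals $\pi(w)$ for some lift $w \in W_1$, and then $u_{-1}\bar{w} = u_{-1}\pi(w) = \pi(u_{-1}w)$. Thus each spanning element of $C_1(W_1/W_2)$ is the $\pi$-image of a spanning element of $C_1(W_1)$, and conversely each such image is a spanning element of $C_1(W_1/W_2)$. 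Because $\pi$ is linear, the two spans coincide, giving $C_1(W_1/W_2) = \pi(C_1(W_1))$.

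It then remains to identify $\pi(C_1(W_1))$ with $(C_1(W_1)+W_2)/W_2$, which is just the standard description of the image of a subspace under a quotient: as $\pi$ has kernel $W_2$, any subspace $U \subseteq W_1$ satisfies $\pi(U) = (U + W_2)/W_2$, and one applies this with $U = C_1(W_1)$. The only point that genuinely carries the argument -- and the place where it would fail if the quotient structure were mishandled -- is the surjectivity of $\pi$, which supplies the lift $w$ of an arbitrary $\bar{w}$ and thereby forces the generators of the two $C_1$-spaces to correspond exactly; beyond this I do not expect any real obstacle.
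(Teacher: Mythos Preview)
Your proof is correct. The paper does not supply its own argument for this lemma; it is simply quoted from \cite{H5}, so there is nothing to compare against beyond noting that your direct verification via the quotient map $\pi$ and the identity $u_{-1}\pi(w)=\pi(u_{-1}w)$ is exactly the elementary computation one would expect.
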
	

We shall also need the following fact:
\begin{lemma}[\cite{H5}]\label{inclusion}
If $W$ is a finite length module with $C_1$-cofinite simple composition factors, then $W$ is also $C_1$-cofinite.
\end{lemma}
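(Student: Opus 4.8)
The plan is to induct on the length $l$ of $W$. The base case $l=1$ is immediate: then $W$ is itself an irreducible ordinary module, hence $C_1$-cofinite by hypothesis. For the inductive step, the essential observation is that $C_1$-cofiniteness is preserved under extensions; once this is established, I apply it to the short exact sequence coming from the top of a composition series. Concretely, write $W = W_1 \supset W_2 \supset \cdots \supset W_{l+1}=0$ with each $W_i/W_{i+1}$ irreducible ordinary and $C_1$-cofinite, and set $U = W_2$. Then $W/U \cong W_1/W_2$ is $C_1$-cofinite, while $U$ has length $l-1$ with $C_1$-cofinite composition factors, so $U$ is $C_1$-cofinite by the induction hypothesis. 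It therefore suffices to prove: if $0 \to U \to W \to W/U \to 0$ is exact with $U$ and $W/U$ both $C_1$-cofinite, then $W$ is $C_1$-cofinite.

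For this I would invoke Lemma \ref{qu} twice. Applied to the pair $U \subset W$, it yields $C_1(W/U) = (C_1(W)+U)/U$, and hence
\[
W/(C_1(W)+U) \cong (W/U)/C_1(W/U),
\]
which is finite dimensional since $W/U$ is $C_1$-cofinite. Next, observe that $C_1(U) \subseteq U \cap C_1(W)$: any spanning element $u_{-1}w$ with $u \in V_+$ and $w \in U$ lies in $U$ because $U$ is a submodule, and lies in $C_1(W)$ by definition. The natural surjection $U/C_1(U) \twoheadrightarrow U/(U\cap C_1(W))$ then shows, using the $C_1$-cofiniteness of $U$, that $U/(U\cap C_1(W)) \cong (C_1(W)+U)/C_1(W)$ is finite dimensional, the isomorphism being the second isomorphism theorem. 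Finally, the short exact sequence
\[
0 \to (C_1(W)+U)/C_1(W) \to W/C_1(W) \to W/(C_1(W)+U) \to 0
\]
has finite-dimensional outer terms, so $W/C_1(W)$ is finite dimensional, completing the induction.

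The only genuinely delicate point — and the crux of the argument — is the inclusion $C_1(U) \subseteq U \cap C_1(W)$ combined with the correct identification of the two relevant quotients via the isomorphism theorems; everything else is a formal diagram chase resting on the already-established Lemma \ref{qu}. I do not expect a serious obstacle here, since the composition series reduces the whole statement to the single extension above, and $C_1$-cofiniteness behaves well under these exact-sequence manipulations.
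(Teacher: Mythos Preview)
Your proof is correct. The paper itself does not supply a proof of this lemma; it merely cites \cite{H5}, so there is no in-paper argument to compare against. Your approach---induction on length, reducing to closure of $C_1$-cofiniteness under extensions, then using Lemma~\ref{qu} together with the inclusion $C_1(U)\subseteq U\cap C_1(W)$ and the isomorphism theorems---is the standard one and is essentially what lies behind the cited result.
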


\subsection{Generalized Verma modules}\label{GVM}
This subsection is mainly due to \cite{Li} (cf. \cite{DLM2}). Let $(V, Y, \one, \om)$ be a vertex operator algebra. Let $W$ be a lower bounded generalized $V$-module and let
\[
W^{top} = \Span_{\CC}\{w \in W|w \;\mbox{homogeneous}, \; w' = 0\; \mbox{if}\; \Re(\wt w') < \Re(\wt w)\}.
\]
It is easy to see that $W^{top}$ is a module for the Zhu's algebra $A(V)$ and
\[
W^{top} \cap C_1(W) = 0.
\]

Consider the quotient space
\[
\widehat{V} = V\otimes \CC[t, t^{-1}]/D(V\otimes \CC[t, t^{-1}]),
\]
where $D = L(-1)\otimes 1 + 1\otimes \frac{d}{dt}$. Denote by $v(m)$ the image of $v\otimes t^m$ in $\widehat{V}$ for $v \in V$ and $m \in \ZZ$. Then $\widehat{V}$ is $\ZZ$-graded by defining the degree of $v(m)$ to be $\wt v -m-1$ for homogeneous $v$. Denote the homogeneous space of degree $m$ by $\widehat{V}(m)$. The space $\widehat{V}$ is in fact a $\ZZ$-graded Lie algebra with Lie bracket
\[
[u(m), v(n)] = \sum_{i \in \NN}\binom{m}{i}u_iv(m+n-i).
\]
In particular, $\widehat{V}(0)$ is a Lie subalgebra. Let $U$ be a $\widehat{V}(0)$-module, it can be lifted to a module for $P = \bigoplus_{i \geq 0}\widehat{V}(i)$ by letting $\widehat{V}(i)$, $i > 0$ act trivially. Set
\[
M(U) = \cU(\widehat{V})\otimes_{\cU(P)}U.
\]
Let $J(U)$ be the intersection of all $\ker \a$ where $\a$ runs over all $\widehat{V}$-homomorphisms from $M(U)$ to weak $V$-modules. Set
\[
F(U) = M(U)/J(U).
\]
Then $F(U)$ is a weak $V$-module, called {\em generalized Verma module}. From the construction, $F(U)$ has the following universal property: for any weak $V$-module $M$ such that $M^{top} = U$, there is a unique weak $V$-module homomorphism $\tilde{\phi}: F(U)\rightarrow M$. Note that $F(U)$ might be zero.

There is a natural Lie algebra homomorphism $\widehat{V}(0)\rightarrow A(V)$ that sends $v(\wt v -1)$ to the image of $v \in V$ in $A(V)$, thus for any $A(V)$-module $U$, it can be lifted to $\widehat{V}(0)$-module. One can construct $F(U)$ as follows: Define for $v \in V$,
\[
Y_{M(U)}(v, z) = \sum_{m \in \ZZ}v(m)z^{-m-1},
\]
and let $I$ be the subspace of $M(U)$ spanned by the coefficients of
\[
(z_0+z_2)^{\wt u}Y_{M(U)}(u, z_0 + z_2)Y_{M(U)}(v, z_2)w = (z_2+z_0)^{\wt u}Y_{M(U)}(Y(u,z_0)v, z_2)w
\]
for any homogeneous $u, v \in V$ and $w \in U$. Then the space
\[
F(U) = M(U)/\cU(\widehat{V})I
\]
becomes a weak $V$-module such that $F(U)^{top} = U$. In particular, $F(U)$ is not zero.

Take $w \in W^{top}$, let $\jiao{w}$ be the $A(V)$-module generated by $w$ and let $V\cdot w$, called the highest weight module, be the $V$-submodule of $W$ generated by $w$ (See \cite[Sec.~4.5]{LL}). By the universal property, there is a surjective $V$-module homomorphism $F(\jiao{w}) \twoheadrightarrow V \cdot w$.

If $W$ is $C_1$-cofinite, then $W^{top}$ is finite dimensional. Therefore $F(\jiao{w})$ is a grading-restricted generalized $V$-module.

\subsection{Affine vertex operator algebras}
Vertex operator algebras associated to affine Lie algebras are the most important examples of vertex operator algebras. Let us briefly recall the construction of affine vertex operator algebras. The affine Lie algebra $\widehat{\mathfrak{g}}$ associated with a finite simple Lie algebra $\mathfrak{g}$ and $(\cdot, \cdot)$ is the vector space $\mathfrak{g} \otimes \CC[t, t^{-1}] \oplus \CC{\bf k}$ equipped with the bracket operation defined by
\[
[a \otimes t^m, b \otimes t^n] = [a, b]\otimes t^{m+n} + (a, b)m\delta_{m+n,0}{\bf k},
\]
\[
[a \otimes t^m, {\bf k}] = 0,
\]
for $a, b \in \mathfrak{g}$ and $m, n \in \ZZ$. It is $\ZZ$-graded in a natural way. Consider the subalgebras
\[
\widehat{\mathfrak{g}}_{\pm} = \mathfrak{g}\otimes t^{\pm 1}\CC[t^{\pm 1}]
\]
and the vector space decomposition
\[
\widehat{\mathfrak{g}} = \widehat{\mathfrak{g}}_{-} \oplus \mathfrak{g} \oplus \CC{\bf k} \oplus \widehat{\mathfrak{g}}_{+}.
\]

For $\lambda \in \mathfrak{h}^{*}$, the dual space of a Cartan subalgebra $\frh \subset \frg$, we use $E^\lambda$ to denote the irreducible highest weight $\mathfrak{g}$-module with highest weight $\lambda$.

Let $M$ be a $\mathfrak{g}$-module, viewed as homogeneously graded of fixed degree, and let $k \in \CC$. Let $\widehat{\mathfrak{g}}_{+}$ act on $M$ trivially and ${\bf k}$ act as the scalar multiplication by $k$. Then $M$ becomes a $\mathfrak{g} \oplus \CC{\bf k} \oplus \widehat{\mathfrak{g}}_{+}$-module, and we have the $\CC$-graded induced $\widehat{\mathfrak{g}}$-module
\[
\widehat{M}_{k} = U(\widehat{\mathfrak{g}})\otimes_{U(\mathfrak{g}\oplus \CC{\bf k}\oplus \hat{\mathfrak{g}}_{+})}M,
\]
which contains a canonical copy of $M$ as its subspace.

For $\lambda \in \mathfrak{h}^{*}$, we use the notation $V_k(\lambda)$, called the {\em generalized Verma modules}, to denote the graded $\widehat{\mathfrak{g}}$-module $\widehat{E^\lambda}_{k}$. Let $J_k(\lambda)$ be the maximal proper submodule of $V_k(\lambda)$ and $L_k(\lambda) = V_k(\lambda)/J_k(\lambda)$. Then $L_k(\lambda)$ is the unique irreducible graded $\widehat{\mathfrak{g}}$-module such that ${\bf k}$ acts as $k$ and the space of all elements annihilated by $\widehat{\mathfrak{g}}_{+}$ is isomorphic to the $\mathfrak{g}$-module $E^\lambda$. It is clear that the generalized Verma module $V_k(\lambda)$ is grading-restricted if and only if $\l \in P_+$, the set of dominant integral weights of $\frg$.

\begin{remark}
By the universal property, the generalized Verma module $V_k(\lambda)$ is the same as the generalized Verma module $F(E^{\lambda})$ constructed in Subsection \ref{GVM}.
\end{remark}

For $k \neq -h^{\vee}$, the quotient modules of $V_k(0)$ have vertex operator algebra structure (\cite{FZ}, \cite{LL}), we denote the vertex operator algebras $V_k(0)$ and $L_k(0)$ by $V_k(\mathfrak{g})$ and $L_k(\mathfrak{g})$, respectively.

In the special case that $\mathfrak{g}$ is a one-dimensional abelian Lie algebra with a basis $\{\alpha\}$, let $M_{\alpha}(k)$ (or simply $M(k)$) be the universal Heisenberg vertex operator algebra of level $k$ generated by $\alpha$. The irreducible $M_{\alpha}(k)$-modules are the modules $M_{\alpha}(k, \lambda)$ (or simply $M(k,\lambda)$) generated by a vector $v_{\lambda}$ with the action given by $\alpha(n)v_{\lambda} = \delta_{n,0}\lambda v_{\lambda}$ for $n \in \ZZ_{\geq 0}$.


\subsection{Tensor category of vertex operator superalgebra extensions}
In this subsection, we will recall tensor category theory of vertex operator superalgebra extensions developed mainly in \cite{HKL, KO, CKM1, CKM2}.

Let $V$ be a vertex operator algebra and $(\cC, \btimes, \one_{\cC}, \cA_{\bullet,\bullet,\bullet}, l_{\bullet}, r_{\bullet}, \cR_{\bullet,\bullet})$ be a category of $V$-modules with a natural vertex and braided tensor category structure in the sense of \cite{HLZ0}-\cite{HLZ8}, which we will briefly summarize in Section 3. Note that unit object $\one_{\cC}$ is actually the vertex operator algebra $V$ itself. A triple $(A, \mu_A, \iota_A)$ with $A$ an object in $\cC$ and $\mu_A: A\btimes A \rightarrow A$, $\iota_A: \one_{\cC} \rightarrow A$ morphisms in $\cC$ is called an {\em associative algebra} if:
\begin{itemize}
\item[(1)]Multiplication is associative: $\mu_A \circ (\id_A \btimes \mu_A) = \mu_A \circ (\mu_A \btimes \id_A)\circ \cA_{A,A,A}: A\btimes (A \btimes A) \rightarrow A$
\item[(2)]Multiplication is unital: $\mu_A \circ (\iota_A \btimes \id_A) = l_A: \one_{\cC}\btimes A \rightarrow A$ and $\mu_A \circ (\id_A \btimes \iota_A) = r_A: A \btimes \one_{\cC} \rightarrow A$
\end{itemize}
We say $(A, \mu_A, \iota_A)$ is a {\em commutative algebra} if additionally
\begin{itemize}
\item[(3)]Multiplication is commutative: $\mu_A \circ \cR_{A,A} = \mu_A: A \btimes A \rightarrow A$.
\end{itemize}

For an associative algebra $(A, \mu_A, \iota_A)$, define $\cC_A$ to be the category of pairs $(X, \mu_X)$, where $X$ is an object of $\cC$ and $\mu_X: A\btimes X \rightarrow X$ is a morphism in $\cC$ satisfy the following:
\begin{itemize}
\item[(1)]Unit property: $l_X = \mu_X \circ (\iota_A \btimes \id_X): \one_{\cC}\btimes X \rightarrow X$
\item[(2)]Associativity: $\mu_X \circ (\id_A \btimes \mu_X) = \mu_X \circ (\mu_A \btimes \id_X)\circ \cA_{A, A, X}: A \btimes (A \btimes X) \rightarrow X$.
\end{itemize}
A morphism $f \in \Hom_{\cC_A}((X_1, \mu_{X_1}), (X_2, \mu_{X_2}))$ is a morphism $f \in \Hom_{\cC}(X_1, X_2)$ such that $\mu_{X_2}\circ (\id_A \btimes f) = f\circ \mu_{X_1}$.

When $A$ is commutative, define $\cC_A^{loc}$ to be the full subcategory of $\cC_A$ containing {\em local} (or {\em dyslectic}) objects: those $(X, \mu_X)$ such that $\mu_X \circ \cR_{X,A} \circ \cR_{A, X} = \mu_X$.

Let $W$ be an object of $\cC$, define $\cF_A(W) = A \btimes W$ and
\[
\mu_{\cF_A(W)}: A \btimes (A \btimes W) \xr{\cA_{A,A,W}}  (A \btimes A) \btimes W \xr{\mu_A\btimes \id_W} A \btimes W.
\]
Then $(\cF_A(W), \mu_{\cF_A(W)})$ is an object of $\cC_A$. The {\em induction functor}, which we also denote by $\cF_A$, is given by
\begin{align*}
\cF_A: \cC &\rightarrow \cC_A\\
W &\mapsto (\cF_A(W), \mu_{\cF_A(W)});\;\;\; f \mapsto \id_A \btimes f.
\end{align*}
The {\em restriction functor} $\cG_A: \cC_A \rightarrow \cC$ is given by
\[
\cG_A: (X, \mu_X) \mapsto X; \;\;\; f \mapsto f.
\]

\begin{theorem}\label{sum1}
Let $\cC$ be a braided tensor category and let $A$ be a commutative and associative algebra in $\cC$. Then the following results hold:
\begin{itemize}
\item[(1)] The category $\cC_A$ of $A$-modules is a tensor category (\cite[Thm.~1.5]{KO}, \cite[Thm.~2.53]{CKM1}), and the subcategory $\cC_A^{loc}$ of local $A$-modules is a braided tensor category (\cite[Thm.~1.10]{KO}, \cite[Thm.~2.55]{CKM1}).
\item[(2)]The induction functor $\cF_A: \cC \rightarrow \cC_A$ is tensor functor (\cite[Thm.~1.6]{KO}, \cite[Thm.~2.59]{CKM1}).
\item[(3)]The induction functor satisfies Frobenius reciprocity, that is, it is left adjoint to the restriction functor $\cG_A$ from $\cC_A$ to $\cC$:
\begin{equation}\label{eqn:fro-rec}
\Hom_{\cC_A}(\cF_A(W), X) = \Hom_{\cC}(W, \cG_A(X))
\end{equation}
for objects $W$ in $\cC$ and $X$ in $\cC_A$ (see for example \cite[Thm.~1.6(2)]{KO}, \cite[Lem.~7.8.12]{EGNO}, \cite[Lem.~2.61]{CKM1}).
\item[(4)]Let $W$ be an object in $\cC$. Then $\cF_A(W)$ is in $\cC_A^{loc}$ if and only if the monodromy isomorphism $\cM_{A, W} : = \cR_{W,A}\circ \cR_{A,W} = \id_{A\btimes W}$ (\cite[Prop.~2.65]{CKM1}).
\item[(5)]Let $\cC^0$ be the full subcategory of objects in $\cC$ that induce to $\cC_A^{loc}$. Then the restriction of the induction functor $\cF_A: \cC^0 \rightarrow \cC_A^{loc}$ is a braided tensor functor (\cite[Thm.~2.67]{CKM1}).
\end{itemize}
\end{theorem}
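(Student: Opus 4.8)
These are standard facts about commutative algebras in braided tensor categories, each attributed to a specific result in \cite{KO}, \cite{CKM1} or \cite{EGNO}, so the plan is to reconstruct the underlying categorical constructions rather than merely cite them. The starting point is the relative tensor product over $A$. Given $(X,\mu_X)$ and $(Y,\mu_Y)$ in $\cC_A$, I would define $X \btimes_A Y$ as the coequalizer of the two parallel morphisms $A \btimes (X \btimes Y) \rightrightarrows X \btimes Y$, one obtained from $\mu_Y$ after transporting $A$ across $X$ by the braiding and the other from $\mu_X$; commutativity of $A$ guarantees these are compatible and that the resulting object carries a well-defined $A$-action. Verifying that $\btimes_A$ is associative and unital, with coherence isomorphisms descending from the constraints $\cA_{\bullet,\bullet,\bullet}$, $l_\bullet$, $r_\bullet$ of $\cC$, yields the tensor category $\cC_A$ of part (1). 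For the braided part, I would show that $\cR_{X,Y}$ passes to a morphism $X \btimes_A Y \to Y \btimes_A X$ exactly when $X$ and $Y$ are dyslectic, so that the braiding of $\cC$ restricts to one on $\cC_A^{loc}$.

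For part (2) the key computation is the natural isomorphism $\cF_A(W_1) \btimes_A \cF_A(W_2) \cong \cF_A(W_1 \btimes W_2)$: the two copies of $A$ in $(A \btimes W_1)\btimes_A (A \btimes W_2)$ collapse to one via $\mu_A$, and I would check this isomorphism is compatible with the associativity and unit constraints so that $\cF_A$ is a tensor functor. For part (3) I would exhibit the adjunction directly through the natural map
\[
\Hom_{\cC_A}(\cF_A(W), X) \to \Hom_{\cC}(W, \cG_A(X)), \qquad f \mapsto f \circ (\iota_A \btimes \id_W) \circ l_W^{-1},
\]
with inverse $g \mapsto \mu_X \circ (\id_A \btimes g)$, and verify these are mutually inverse using the unit and associativity axioms for the $A$-module $X$; this amounts to the triangle identities for the unit $(\iota_A \btimes \id_W)\circ l_W^{-1}$ and the counit $\mu_X$.

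Parts (4) and (5) are where the braiding enters essentially. For (4) I would compute the monodromy $\cR_{\cF_A(W), A}\circ \cR_{A, \cF_A(W)}$ on $A \btimes W$ and, using the two hexagon axioms together with the commutativity $\mu_A \circ \cR_{A,A} = \mu_A$ and naturality of the braiding, reduce the dyslecticity condition $\mu_{\cF_A(W)}\circ \cR_{\cF_A(W),A} \circ \cR_{A,\cF_A(W)} = \mu_{\cF_A(W)}$ to the single equation $\cM_{A,W} = \cR_{W,A}\circ \cR_{A,W} = \id_{A \btimes W}$, which characterizes $\cC^0$.

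Part (5) then follows by combining (4) with the explicit braiding on $\cC_A^{loc}$ constructed in (1): for $W_1, W_2 \in \cC^0$ one checks that $\cF_A$ sends $\cR_{W_1,W_2}$ to the induced braiding on $\cF_A(W_1)\btimes_A \cF_A(W_2)$, so that the restricted functor $\cF_A \colon \cC^0 \to \cC_A^{loc}$ is braided. The main obstacle in a from-scratch proof is not any individual coherence check but the existence and right-exactness of the relative tensor product $\btimes_A$ in the vertex-algebraic setting, where $\btimes$ is the $P(z)$-tensor product and morphisms must be built from intertwining operators; this is precisely the analytic input supplied by the theory of \cite{HLZ0}-\cite{HLZ8} and carried out carefully in \cite{CKM1}, and it is what makes the otherwise formal categorical arguments above legitimate in the category $\cC$.
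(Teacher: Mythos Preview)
The paper does not prove Theorem~\ref{sum1} at all: it is stated as a summary of known results, with each item carrying an inline citation to \cite{KO}, \cite{CKM1}, or \cite{EGNO}, and no argument is given beyond those references. Your proposal therefore goes beyond what the paper does by actually sketching the categorical constructions behind each item, and your sketch is essentially correct: the relative tensor product $\btimes_A$ as a coequalizer, the collapse of two copies of $A$ for monoidality of $\cF_A$, the explicit unit/counit for the adjunction, and the hexagon reduction for dyslecticity are exactly the standard arguments that the cited sources carry out.

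One small remark: your closing paragraph locates the main difficulty in the vertex-algebraic realization of $\btimes_A$ via intertwining operators, but Theorem~\ref{sum1} is stated for an abstract braided tensor category $\cC$, and at that level the coequalizer construction is purely categorical (assuming $\cC$ has the relevant colimits). The analytic input from \cite{HLZ0}--\cite{HLZ8} is what makes $\cC$ a braided tensor category in the first place; once that is granted, the arguments you outline need no further vertex-algebraic content. The interface with the vertex operator algebra structure is really the content of Theorem~\ref{sum2}, not Theorem~\ref{sum1}.
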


The following theorem allows one to study the tensor structure of the category of modules in $\cC$ for a vertex operator (super)algebra extension of $V$ using abstract tensor category theory:
\begin{theorem}\label{sum2}
Let $V$ be a vertex operator algebra, and let $\cC$ be a category of $V$-modules with a natural vertex tensor category structure. Then the following results hold:
\begin{itemize}
\item[(1)] A vertex operator algebra extension $V \subset A$ in $\cC$ is equivalent to a commutative associative algebra in the braided tensor category $\cC$ with trivial twist and injective unit (\cite[Thm.~3.2]{HKL}).
\item[(2)] The category of modules in $\cC$ for the extended vertex operator algebra $A$ is isomorphic to the category of local $\cC$-algebra modules $\cC_A^{loc}$ (\cite[Thm.~3.4]{HKL}).
\item[(3)] The results in (1) and (2) hold for a vertex operator superalgebra extension: The vertex operator superalgebra extension $V \subset A$ in $\cC$ such that $V$ is in the even subalgebra $A^0$ is equivalent to a commutative associative superalgebra in $\cC$ whose twist $\theta$ satisfies $\theta^2 = \id_A$. The category of generalized modules for the vertex operator superalgebra $A$ is isomorphic to the category of local $\cC$-superalgebra modules $\cC_A^{loc}$ (\cite[Thm.~3.13, 3.14]{CKL}).
\item[(4)] The isomorphism given in \cite[Thm.~3.4]{HKL} and \cite[Thm.~3.14]{CKL} between the category of modules in $\cC$ for the extended vertex operator (super)algebra $A$ and the category of local $\cC$-algebra modules $\cC_A^{loc}$ is an isomorphism of vertex tensor (super)categories (\cite[Thm.~3.65]{CKM1}).
\end{itemize}
\end{theorem}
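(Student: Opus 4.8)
The statement assembles a dictionary, due to Huang--Kirillov--Lepowsky, Creutzig--Kanade--Linshaw and Creutzig--Kirillov--McRae, between the vertex-algebraic notion of an extension $V \subset A$ and the purely categorical notion of a (super)algebra object in $\cC$. The plan is to build this dictionary by translating each piece of vertex-operator data into a morphism in $\cC$ and matching the defining identities on both sides, using throughout the tensor-categorical framework recalled in Theorem \ref{sum1}.

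For (1), I would first produce the multiplication. The vertex operator $Y_A$ of the extended algebra, restricted to $A \ot A$, is an intertwining operator of type $\binom{A}{A\,A}$; by the universal property of the $P(z)$-tensor product underlying the braided tensor structure of $\cC$, such an intertwining operator is the same datum as a morphism $\mu_A \colon A \btimes A \to A$. The vacuum vector $\one \in V \subset A$ supplies the unit $\iota_A \colon \one_\cC \to A$, which is injective precisely because $V$ sits inside $A$ as a subalgebra. I would then check that associativity of $\mu_A$ is the categorical shadow of the associativity (operator product expansion) of vertex operators, so that it matches the constraint $\cA_{A,A,A}$, and that commutativity $\mu_A \circ \cR_{A,A} = \mu_A$ is exactly the skew-symmetry of $Y_A$ once the braiding $\cR_{A,A}$ is unwound as the analytic continuation implementing it. Finally, the twist $\theta_A = e^{2\pi i L_0}$ is trivial because the conformal weights of a vertex operator algebra lie in $\ZZ$, so $L_0$ has integer eigenvalues on $A$. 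The converse reconstructs $Y_A$ from $\mu_A$ as an intertwining operator and derives the Jacobi identity from associativity and commutativity.

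For (2), each module $M$ for the extended algebra $A$ inside $\cC$ carries, by the same universal property, an action morphism $\mu_M \colon A \btimes M \to M$ obtained from $Y_M|_{A \ot M}$, and the module axioms become the unit and associativity axioms defining $\cC_A$. The key point is that the locality (dyslecticity) condition $\mu_M \circ \cM_{A,M} = \mu_M$ for the monodromy isomorphism $\cM_{A,M} = \cR_{M,A}\circ \cR_{A,M}$ corresponds exactly to the single-valuedness of $Y_M(a,x)m$, i.e.\ to $M$ being a genuine untwisted $A$-module; this yields the isomorphism between $\cC_A^{loc}$ and the category of modules in $\cC$ for the extended algebra $A$. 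Part (3) runs identically after adjoining the $\ZZ/2$-grading: now the conformal weights lie in $\tfrac12\ZZ$, so $\theta_A^2 = \id_A$ rather than $\theta_A = \id_A$, and the monodromy/locality analysis is carried out with the Koszul sign, giving the super-analogue of the equivalence for generalized modules.

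Part (4) upgrades the isomorphism of abelian categories to an isomorphism of vertex tensor (super)categories by verifying compatibility with the tensor product bifunctor and with the associativity, unit and braiding isomorphisms on both sides. I expect the main obstacle throughout to be the faithful translation between the analytic data (intertwining operators, $P(z)$-tensor products, analytic continuation of compositions) and the categorical data, and in particular the verification that the trivial-monodromy condition defining $\cC_A^{loc}$ is genuinely equivalent to the single-valuedness characterizing honest modules, since the braiding and monodromy isomorphisms are defined through parallel transport and their explicit action must be computed to make this matching precise.
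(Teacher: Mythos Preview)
The paper does not supply a proof of this theorem: it is stated purely as a compilation of results, with each item attributed to its original source (\cite[Thm.~3.2, 3.4]{HKL}, \cite[Thm.~3.13, 3.14]{CKL}, \cite[Thm.~3.65]{CKM1}) and no further argument given. Your proposal, by contrast, sketches the actual content of those cited proofs---the dictionary between intertwining operators and morphisms via the universal property of the $P(z)$-tensor product, the matching of OPE associativity and skew-symmetry with the categorical associator and braiding, and the identification of single-valuedness with trivial monodromy. That sketch is accurate in spirit and aligns with how \cite{HKL} and \cite{CKM1} proceed, but it goes well beyond what the present paper does: here the theorem functions only as a black-box summary to be invoked later, not as something to be re-proved.
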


The following proposition gives an criterion for the simplicity of the induced objects:
\begin{prop}\cite[Prop.~4.4]{CKM1}\label{prop:simplicity}
Let $A$ be a vertex operator (super)algebra extension of $V$ with $V \subset A^0$, the even subalgebra of $A$
and both $A$, $V$ simple, and suppose $A = \oplus_{i \in I}A_i$ as a $V$-module with each $A_i$ a simple $V$-module. If $W$ is a simple $V$-module such that each non-zero $A_i\btimes_V W$ is a simple $V$-module and each non-zero $A_i\btimes_V W \ncong A_j \btimes_V W$ unless $i=j$, then $\cF_A(W)$ is a simple object of $\cC_A^0$.
\end{prop}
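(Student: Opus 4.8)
The plan is to reduce the simplicity of $\cF_A(W)$ to $\Hom$-computations among $V$-modules in $\cC$, via Frobenius reciprocity (Theorem~\ref{sum1}(3)) together with the hypotheses that $W$ and the nonzero $A_i\btimes_V W$ are simple and pairwise non-isomorphic. Since $V=\one_{\cC}$ is a $V$-submodule of $A$ isomorphic to the simple module $\one$, it is one of the summands; call it $A_0$, so that $A_0\btimes_V W\cong W$. I write $\btimes=\btimes_V$ below.

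First I would compute the endomorphism algebra of $\cF_A(W)$. As a $V$-module $\cG_A\cF_A(W)=A\btimes W=\bigoplus_{i\in I}A_i\btimes W$, so Frobenius reciprocity gives
\[
\End_{\cC_A}(\cF_A(W))\cong\Hom_{\cC}(W,\cG_A\cF_A(W))\cong\bigoplus_{i\in I}\Hom_{\cC}(W,A_i\btimes W).
\]
Each nonzero $A_i\btimes W$ is simple, so by Schur's lemma the $i$-th summand vanishes unless $A_i\btimes W\cong W$; as $A_0\btimes W\cong W$ is nonzero and the nonzero $A_i\btimes W$ are pairwise non-isomorphic, this happens only for $i=0$. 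Hence $\End_{\cC_A}(\cF_A(W))\cong\Hom_{\cC}(W,W)=\CC$ and $\cF_A(W)$ is indecomposable. This does not by itself give simplicity---a non-split extension of two non-isomorphic simple objects also has one-dimensional endomorphism algebra---so a direct analysis of submodules is still needed.

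Next I would describe the subobjects. The restriction functor $\cG_A$ is exact and faithful, so any subobject $M\subseteq\cF_A(W)$ in $\cC_A$ restricts to a $V$-submodule of the semisimple, multiplicity-free module $\bigoplus_i A_i\btimes W$; hence $\cG_A(M)=\bigoplus_{i\in S}A_i\btimes W$ for some set $S$ of nonzero indices. I would then note that $\cF_A(W)$ is generated over $A$ by the summand $A_0\btimes W\cong W$: unwinding the action $\mu_{\cF_A(W)}$ through associativity and the right unit shows that multiplication $\mu_A\colon A_j\btimes A_0\to A$ carries $A_0\btimes W$ isomorphically onto $A_j\btimes W$ for every $j$. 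Consequently, if $0\in S$ then $M=\cF_A(W)$.

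It remains to prove that every nonzero $A$-submodule $M$ contains the vacuum sector $A_0\btimes W$, i.e.\ that $0\in S$; this is the main obstacle. The idea is that from any nonzero summand one can act back down to $A_0\btimes W$. Fix $i\in S$, so $A_i\btimes W\subseteq M$, and let $A_{i^*}$ be the dual summand, characterized by the $A_0$-component of $\mu_A\colon A_{i^*}\btimes A_i\to A$ being nonzero; such a summand exists precisely because $A$ is simple, so its invariant bilinear form restricts to a nondegenerate pairing $A_{i^*}\times A_i\to A_0=V$. Acting on $A_i\btimes W\subseteq M$ then produces, via this $A_0$-component, a $V$-module map $A_{i^*}\btimes(A_i\btimes W)\to A_0\btimes W\cong W$ landing in $M$. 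The delicate point is the nonvanishing of this composite after tensoring with $W$; it follows from nondegeneracy of the pairing, associativity of $\btimes$, and the simplicity of $W$ (so that a nonzero map into $W$ is surjective). Thus $W=A_0\btimes W\subseteq M$, giving $0\in S$ and $M=\cF_A(W)$. Therefore $\cF_A(W)$ is simple. In the vertex operator superalgebra case the same argument applies with all objects, morphisms and decompositions respecting the $\ZZ/2$-grading, yielding simplicity of $\cF_A(W)$ as an object of $\cC_A^0$.
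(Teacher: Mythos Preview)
The paper does not give its own proof here; the proposition is quoted from \cite[Prop.~4.4]{CKM1}, so there is no in-paper argument to compare against. Your proof is correct. On the two points you mark as delicate: the existence of $i^*$ needs no invariant bilinear form, since simplicity of $A$ as an $A$-module already forces the image of $\mu_A|_{A\btimes A_i}\colon A\btimes A_i\to A$ to be all of $A$, so some component $\mu_A|_{A_j\btimes A_i}\to A_0$ is nonzero and one may take $i^*=j$; and once this component $p$ is surjective onto the simple module $A_0=V$, right exactness of the $P(z)$-tensor bifunctor makes $p\btimes\id_W$ surjective onto $A_0\btimes W\cong W\neq 0$, which lies in $M$ because $M$ is an $A$-submodule.
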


If $A$ is an infinite direct sum of objects in the category $\cC$, thus it is not an object of $\cC$ itself, we have to consider {\em direct sum completion} $\cC^{\oplus}$ (\cite{AR, CKM2}), or more generally {\em direct limit completion} $\Ind(\cC)$ of the category $\cC$ (\cite{CMY1}). In this paper, we shall only need the direct sum completion $\cC^{\oplus}$ whose objects are direct sums $\bigoplus_{s \in S}X_s$ of objects in $\cC$, where $S$ is an arbitrary index set, and whose morphisms $f: \bigoplus_{s \in S}X_s \rightarrow \bigoplus_{t \in T}Y_t$ are such that for $s \in S$, $f|_{X_s}$ maps to $\bigoplus_{t \in T'}Y_t$ for some finite subset $T' \subset T$ (see a detailed definition in \cite{AR}, cf. \cite[Appendix~A]{CKM2}).

If $\cC$ is a braided tensor category, then direct sum completion $\cC^{\oplus}$ inherits the same tensor structure from $\cC$ by defining all the structure isomorphisms componentwise (\cite{AR}). Moreover there is a braided monoidal functor $\cC \rightarrow \cC^{\oplus}$ which is fully faithful, that is, bijective on morphisms. If $\cC$ is semisimple, then $\cC^{\oplus}$ is also an abelian category. As a result, for a semisimple braided tensor category $\cC$, all the results in Theorem \ref{sum1}, Theorem \ref{sum2} and Proposition \ref{prop:simplicity} naturally apply with $\cC$ replaced by its direct sum completion $\cC^{\oplus}$.

\section{Tensor categories of vertex operator algebras}
The logarithmic tensor category theory of Huang, Lepowsky and Zhang (\cite{HLZ0}--\cite{HLZ8}) gave a construction of a braided tensor category structure on an appropriate module category $\cC$ for a vertex operator algebra. In this section, we will give a sufficient condition for the existence of this tensor category structure by verifying the assumptions needed for invoking the theory, i.e., \cite[Assump.~10.1]{HLZ6} and \cite[Assump.~12.1, 12.2]{HLZ8}.

The \cite[Assump.~10.1]{HLZ6} mainly requires that the category $\cC$ is closed under taking submodules, quotients, contragredient duals and {\em $P(z)$-tensor products}, and \cite[Assump.~12.2]{HLZ8}, for verifying the {\em associativity axioms}, can be replaced by the conditions in Theorem \ref{assiso} below (see \cite[Thm.~3.1]{H6}). We shall briefly recall the construction of the $P(z)$-tensor products and the associativity axiom in order to check the applicability of the logarithmic tensor category theory.

\subsection{Construction of the $P(z)$-tensor product}
In the tensor category theory for vertex operator algebras, the tensor product bifunctors
are not built on the classical tensor product bifunctor for vector spaces. Instead, the central concept underlying the constructions is the notion of $P(z)$-tensor product (\cite{HL3}, \cite{HLZ3}, \cite{HLZ4}), where $z$ is a nonzero complex number and $P(z)$ is the Riemann sphere $\overline{\CC}$ with one negative oriented puncture at $\infty$ and two ordered positively oriented punctures at $z$ and $0$, with local coordinates $1/w$, $w-z$ and $w$, respectively.

We briefly recall the construction of $P(z)$-tensor product in \cite{HLZ4}. Let $W_1, W_2 \in \cC$, let $v \in V$ and
\[
Y_t(v, x) = \sum_{n \in Z}(v \otimes t^n)x^{-n-1} \in (V \otimes \mathbb{C}[t, t^{-1}])[[x, x^{-1}]].
\]
Denote by $\tau_{P(z)}$ the action of
\[
V \otimes \iota_{+}{\mathbb C}[t,t^{- 1}, (z^{-1}-t)^{-1}]
\]
on the vector space $(W_1 \otimes W_2)^*$, where $\iota_+$ is the operation of expanding a rational function in the formal variable $t$ in the direction of positive powers of $t$, given by
\begin{equation}\label{defofpz}
\begin{split}
 \bigg(\tau_{P(z)}\bigg(x_0^{-1}\delta\bigg(\frac{x_1^{-1}-z}{x_0}\bigg)&Y_t(v, x_1)\bigg)\lambda\bigg)(w_{(1)}\otimes w_{(2)}) = \\
& \;\;\;\; z^{-1}\delta\bigg(\frac{x_1^{-1}-x_0}{z}\bigg)\lambda(Y_1(e^{x_1L(1)}(-x_1^{-2})^{L(0)}v, x_0)w_{(1)}\otimes w_{(2)}) \\
&\quad  +\; x_0^{-1}\delta\bigg(\frac{z-x_1^{-1}}{-x_0}\bigg)\lambda(w_{(1)}\otimes Y_2^{\circ}(v, x_1)w_{(2)}),
\end{split}
\end{equation}
for $v \in V$, $\lambda \in (W_1 \otimes W_2)^*$, $w_{(1)} \in W_1$ and $w_{(2)} \in W_2$. Denote by $Y_{P(z)}'$ the action of $V \otimes \mathbb{C}[t,t^{-1}]$ on $(W_1\otimes W_2)^*$ defined by
\[
Y_{P(z)}'(v,x) = \tau_{P(z)}(Y_t(v,x)).
\]
We also have the operators $L_{P(z)}'(n)$ for $n \in \mathbb{Z}$ defined by
\begin{equation}
Y_{P(z)}'(\omega,x) = \sum_{n \in \ZZ}L_{P(z)}'(n)x^{-n-2}.
\end{equation}

Given two objects $W_1$ and $W_2$ in $\cC$, let $W_1\hboxtr_{P(z)}W_2$ be the vector space consisting of all the elements $\lambda \in (W_1 \otimes W_2)^*$ satisfying the following two conditions:
\begin{itemize}
\item[(1)]{\it $P(z)$-compatibility condition}:
\begin{itemize}
\item[(a)]{\it Lower truncation condition}: For all $v \in V$, the formal Laurent series $Y_{P(z)}'(v,x)\lambda$ involves only finitely many negative powers of $x$.
\item[(b)] The following formula holds for all $v$ in $V$:
\begin{equation}\nonumber
\tau_{P(z)}\bigg(z^{-1}\delta\bigg(\frac{x_1-x_0}{z}\bigg)Y_t(v,x_0)\bigg)\lambda
  = z^{-1}\delta\bigg(\frac{x_1-x_0}{z}\bigg)Y_{P(z)}'(v,x_0)\lambda .
\end{equation}
\end{itemize}
\item[(2)] {\it $P(z)$-local grading restriction condition}:
\begin{itemize}
\item[(a)] {\it Grading condition}: $\lambda$ is a (finite) sum of generalized eigenvectors of $(W_1 \otimes W_2)^*$ for the operator $L_{P(z)}'(0)$.
\item[(b)]The smallest subspace $W_{\lambda}$ of $(W_1 \otimes W_2)^*$ containing $\lambda$ and stable under the component operators $\tau_{P(z)}(v\otimes t^n)$ of the operators $Y_{P(z)}'(v,x)$ for $v \in V$, $n \in \mathbb{Z}$, have the properties:
    \begin{equation*}
    \mbox{dim}\; (W_{\lambda})_{[n]} < \infty
    \end{equation*}
    \[
    (W_{\lambda})_{[n+k]} = 0\;\;\;\mbox{for $k \in \mathbb{Z}$ sufficiently negative};
    \]
    for any $n \in \mathbb{C}$, where the subscripts denote the $\mathbb{C}$-grading by $L_{P(z)}'(0)$-eigenvalues.
\end{itemize}
\end{itemize}

The following theorem gives the construction of $P(z)$-tensor product on an appropriate category $\mathcal{C}$ of $V$-modules:
\begin{theorem}[\cite{HLZ4}]
The vector space $W_1\hboxtr_{P(z)}W_2$ is closed under the action $Y_{P(z)}'$ of $V$ and the Jacobi identity holds on $W_1\hboxtr_{P(z)}W_2$. Furthermore, if $\cC$ is closed under taking contragredients, then the $P(z)$-tensor product of $W_1, W_2 \in \mathcal{C}$ exists if and only if $W_1\hboxtr_{P(z)}W_2$ equipped with $Y_{P(z)}'$ is an object of $\mathcal{C}$ and in this case, the $P(z)$-tensor product is the contragredient module of $(W_1\hboxtr_{P(z)}W_2, Y_{P(z)}')$.
\end{theorem}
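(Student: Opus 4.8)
The plan is to realize $W_1 \hboxtr_{P(z)} W_2$ as the object representing the universal property that \emph{defines} the $P(z)$-tensor product. Recall that a $P(z)$-product of $W_1$ and $W_2$ is a module $(W_3, Y_3)$ in $\cC$ together with a $P(z)$-intertwining map $I\colon W_1 \otimes W_2 \to \overline{W_3}$, and the $P(z)$-tensor product $(W_1 \boxtimes_{P(z)} W_2, \boxtimes_{P(z)})$ is by definition initial among such products. The central device is the transpose: to a $P(z)$-product $(W_3, I)$ and a functional $w_3' \in W_3'$ one associates $\lambda_{w_3'} \in (W_1 \otimes W_2)^*$ by $\lambda_{w_3'}(w_{(1)} \otimes w_{(2)}) = \langle w_3', I(w_{(1)} \otimes w_{(2)}) \rangle$. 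First I would check that the defining axioms of a $P(z)$-intertwining map translate, under this transpose, into exactly the two conditions cutting out $W_1 \hboxtr_{P(z)} W_2$: the $P(z)$-compatibility condition encodes the intertwining relation of $I$ with $Y_t$, while the $P(z)$-local grading restriction condition reflects that $I$ takes values in a grading-restricted module. Moreover $w_3' \mapsto \lambda_{w_3'}$ intertwines the contragredient $V$-action on $W_3'$ with the action $Y_{P(z)}'$.

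For the closure and Jacobi statements I would argue directly on $(W_1 \otimes W_2)^*$. Stability of $W_1 \hboxtr_{P(z)} W_2$ under the component operators $\tau_{P(z)}(v \otimes t^n)$ follows from two observations: the smallest $\tau_{P(z)}$-stable subspace $W_{\tau_{P(z)}(v \otimes t^n)\lambda}$ is contained in $W_\lambda$, so the local grading restriction condition is inherited, and a routine $\delta$-function manipulation shows the compatibility condition is preserved by the action. For the Jacobi identity the key point is that on the full dual $(W_1 \otimes W_2)^*$ the action $Y_{P(z)}'$ satisfies only part of the required identity automatically from the definition \eqref{defofpz} of $\tau_{P(z)}$; it is precisely the compatibility condition (b) that supplies the complementary $\delta$-function relation, and combining the two recovers the full Jacobi identity, with lower truncation coming from the grading restriction. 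The hard part will be exactly this step, since it demands careful bookkeeping of the three formal variables $x_0, x_1, x_2$ and of the expansion operation $\iota_+$, and it is where the two defining conditions must be used in tandem rather than separately.

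Finally, for the \emph{if and only if} assertion I would invoke the universal property. The canonical evaluation pairing gives a map $\boxtimes_{P(z)}\colon W_1 \otimes W_2 \to \overline{(W_1 \hboxtr_{P(z)} W_2)'}$, sending $w_{(1)} \otimes w_{(2)}$ to the functional $\lambda \mapsto \lambda(w_{(1)} \otimes w_{(2)})$, and one verifies this is itself a $P(z)$-intertwining map. If $W_1 \hboxtr_{P(z)} W_2$ equipped with $Y_{P(z)}'$ is an object of $\cC$, then since $\cC$ is closed under contragredients its contragredient lies in $\cC$, and the transpose correspondence of the first paragraph shows that for every $P(z)$-product $(W_3, I)$ the associated map $W_3' \to W_1 \hboxtr_{P(z)} W_2$ dualizes to the unique morphism factoring $I$ through $\boxtimes_{P(z)}$; hence the contragredient of $(W_1 \hboxtr_{P(z)} W_2, Y_{P(z)}')$ satisfies the universal property and is the $P(z)$-tensor product. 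Conversely, if the $P(z)$-tensor product exists in $\cC$, then applying the transpose to its universal intertwining map embeds its contragredient into $W_1 \hboxtr_{P(z)} W_2$, and surjectivity is forced because any $\lambda$ generates a submodule $W_\lambda$ which the local grading restriction condition guarantees is lower-bounded and grading-restricted, hence an object of $\cC$ giving a $P(z)$-product that must factor through the tensor product; this identifies $W_1 \hboxtr_{P(z)} W_2$ with a module in $\cC$.
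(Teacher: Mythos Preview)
The paper itself does not prove this theorem; it is quoted from \cite{HLZ4} (Huang--Lepowsky--Zhang) as background for the construction of $P(z)$-tensor products, with no argument given here. Your outline is essentially the strategy carried out in \cite{HLZ3,HLZ4}: the transpose correspondence between $P(z)$-intertwining maps and elements of $(W_1\otimes W_2)^*$ satisfying the compatibility and local grading restriction conditions, the verification that $Y'_{P(z)}$ preserves $W_1\hboxtr_{P(z)}W_2$ and satisfies Jacobi, and the universal-property argument identifying the tensor product with the contragredient. So at the level of architecture your proposal matches the source.

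There is, however, a genuine gap in your converse direction. You write that any $\lambda\in W_1\hboxtr_{P(z)}W_2$ generates $W_\lambda$, which ``the local grading restriction condition guarantees is lower-bounded and grading-restricted, hence an object of $\cC$.'' The last clause does not follow: the condition (2)(b) as stated only forces $W_\lambda$ to have finite-dimensional homogeneous pieces and to be lower bounded, not to lie in $\cC$. Without $W_\lambda'\in\cC$ you cannot invoke the universal property of the tensor product to factor the associated $P(z)$-intertwining map through $W_1\boxtimes_{P(z)}W_2$, and your surjectivity argument collapses. In the full Huang--Lepowsky--Zhang treatment this is handled either by strengthening the local grading restriction condition to require $W_\lambda\in\cC$ (the ``$\cC$-local grading restriction condition'' appearing later in their series), or by imposing standing closure hypotheses on $\cC$ as in \cite[Assump.~10.1]{HLZ6}. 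You should make explicit which of these routes you are taking; the bare grading-restriction condition quoted in this paper is not enough on its own for an arbitrary $\cC$.
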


\subsection{Associativity isomorphism}
The associativity isomorphism of the $P(z)$-tensor product is the most important ingredient of Huang-Lepowsky-Zhang's tensor category theory. The following theorem, which is essentially a combination of \cite[Thm.~10.3]{HLZ6}, \cite[Thm.~11.4]{HLZ7}, and \cite[Thm.~3.1]{H6}, gives the sufficient condition for the associativity isomorphism:
\begin{theorem}\label{assiso}
Assume that $V$ satisfies the following conditions:
\begin{enumerate}
\item [(1)] For any two modules $W_1$ and $W_2$ in $\mathcal{C}$ and any $z \in \CC^{\times}$, if the generalized $V$-module $W_{\lambda}$ generated by a generalized eigenvector $\lambda \in (W_1 \otimes W_2)^{*}$ for $L_{P(z)}'(0)$ satisfying the $P(z)$-compatibility condition is lower bounded, then $W_{\lambda}$ is an object of $\mathcal{C}$.
\item[(2)]The product or the iterates of the intertwining operators for $V$ have the convergence and extension property (see definitions in \cite[Sec.~11.1]{HLZ7}).
\end{enumerate}
Then for any $V$-module $W_1, W_2$ and $W_3$ and any complex number $z_1$ and $z_2$ satisfying $|z_1|>|z_2|>|z_1-z_2|>0$, there is a unique isomorphism $\mathcal{A}_{P(z_1), P(z_2)}^{P(z_1-z_2), P(z_2)}$ from $W_1 \boxtimes_{P(z_1)} (W_2 \boxtimes_{P(z_2)} W_3)$ to $(W_1 \boxtimes_{P(z_1-z_2)} W_2) \boxtimes_{P(z_2)} W_3$ such that for $w_{(1)} \in W_1$, $w_{(2)} \in W_2$, $w_{(3)} \in W_3$,
\begin{eqnarray*}
&&\overline{\mathcal{A}}_{P(z_1), P(z_2)}^{P(z_1-z_2), P(z_2)}(w_1 \boxtimes_{P(z_1)} (w_2 \boxtimes_{P(z_2)} w_3))\nonumber\\
&& \;\;\; = (w_1 \boxtimes_{P(z_1-z_2)} w_2) \boxtimes_{P(z_2)} w_3,
\end{eqnarray*}
where
\[
\overline{\mathcal{A}}_{P(z_1), P(z_2)}^{P(z_1-z_2), P(z_2)}: \overline{W_1 \boxtimes_{P(z_1)} (W_2 \boxtimes_{P(z_2)} W_3)} \rightarrow \overline{(W_1 \boxtimes_{P(z_1-z_2)} W_2) \boxtimes_{P(z_2)} W_3}
\]
is the canonical extension of $\mathcal{A}_{P(z_1), P(z_2)}^{P(z_1-z_2), P(z_2)}$.
\end{theorem}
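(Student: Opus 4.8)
The plan is to reduce the statement to the logarithmic tensor-category machinery of Huang--Lepowsky--Zhang \cite{HLZ6, HLZ7} in the form streamlined by Huang \cite{H6}, so that the two hypotheses supply precisely the analytic and structural inputs that theory demands. Recall that the $P(z)$-tensor product is governed by a universal property: the pair $(W_1 \boxtimes_{P(z)} W_2, \boxtimes_{P(z)})$ is initial among $P(z)$-intertwining maps out of $W_1 \otimes W_2$, and such maps are exactly logarithmic intertwining operators evaluated at $z$. Consequently the two iterated products $W_1 \boxtimes_{P(z_1)}(W_2 \boxtimes_{P(z_2)} W_3)$ and $(W_1 \boxtimes_{P(z_1-z_2)} W_2)\boxtimes_{P(z_2)} W_3$ are classified by products and by iterates of intertwining maps, respectively, and the asserted isomorphism $\mathcal{A}$ is the unique module map identifying the two presentations. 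First I would fix the target via this universal property and reduce the whole statement to producing a single natural transformation that, on the dense set of generators $w_1 \boxtimes_{P(z_1)}(w_2 \boxtimes_{P(z_2)} w_3)$, implements the passage from a product of intertwining maps to the corresponding iterate.

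Next I would deploy hypothesis (1) to keep every construction inside $\cC$. It asserts that whenever the submodule $W_\lambda$ generated by a $P(z)$-compatible generalized $L_{P(z)}'(0)$-eigenvector is merely lower bounded, it is already an object of $\cC$, hence grading-restricted of finite length. This is exactly the input that verifies the $P(z)$-local grading restriction condition and shows that the tensor products, and then the iterated tensor products, are again objects of $\cC$; equally it guarantees that the generators $w_1 \boxtimes_{P(z_1)}(w_2 \boxtimes_{P(z_2)} w_3)$, which a priori lie only in a formal completion, in fact generate honest objects of $\cC$ on which the universal property returns genuine module maps. By \cite[Thm.~3.1]{H6} this single hypothesis may be substituted for \cite[Assump.~12.2]{HLZ8} in the associativity construction, which is the reason I would phrase the assumption in this economical form.

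Then I would invoke hypothesis (2). The convergence and extension property makes a product of two intertwining operators, convergent for $|z_1| > |z_2| > 0$, extend to a single analytic function on a region overlapping the domain $|z_2| > |z_1-z_2| > 0$ where the matching iterate converges, the two representing the same multivalued function on the common domain $|z_1| > |z_2| > |z_1-z_2| > 0$. By \cite[Thm.~11.4]{HLZ7} this yields the expansion condition of \cite{HLZ6}, and feeding both inputs into \cite[Thm.~10.3]{HLZ6} produces the natural isomorphism $\mathcal{A}$ together with the stated identity on generators. Uniqueness is then formal: since the generators span a generating subspace, any module map is determined by its values there, so at most one $\mathcal{A}$ can satisfy the displayed formula.

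I expect the genuine obstacle to be the passage from convergence-and-extension to the expansion condition, that is, the analytic continuation matching products with iterates. One must control the branches coming from the $\log x$ terms of the logarithmic intertwining operators and verify that the analytically continued function still takes values in the completion of an object of $\cC$ rather than in some larger space; this is exactly where hypotheses (1) and (2) interact, and it is the technical heart of \cite[Thm.~11.4]{HLZ7}. Once that analytic statement is secured, the remaining steps---the universal-property bookkeeping and the verification of the explicit formula on generators---are routine.
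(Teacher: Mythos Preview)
Your proposal is correct and matches the paper's own treatment: the paper does not give an independent proof but states that the theorem is ``essentially a combination of \cite[Thm.~10.3]{HLZ6}, \cite[Thm.~11.4]{HLZ7}, and \cite[Thm.~3.1]{H6},'' and your argument spells out precisely this combination---using hypothesis~(1) via \cite[Thm.~3.1]{H6} to replace \cite[Assump.~12.2]{HLZ8}, hypothesis~(2) via \cite[Thm.~11.4]{HLZ7} to obtain the expansion condition, and then \cite[Thm.~10.3]{HLZ6} to conclude.
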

	
\subsection{Applicability of tensor category theory}
We first introduce the following conjecture (see also discussions in \cite[Sec.~6.3]{CMR}):
\begin{conjecture}\label{mainconjecture}
Let $V$ be a vertex operator algebra such that all the irreducible ordinary $V$-modules are $C_1$-cofinite. Then we conjecture that $W$ is a lower bounded $C_1$-cofinite module if and only if $W$ is a finite length generalized module.
\end{conjecture}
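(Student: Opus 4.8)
The plan is to prove the two implications separately, establishing the easy one in full and reducing the hard one to a single finiteness statement about generalized Verma modules. For the easy direction, suppose $W$ is a finite length generalized module. As noted in the remarks following the definition, such a $W$ is grading-restricted, hence in particular lower bounded. Its simple composition factors are by definition irreducible ordinary modules, and these are $C_1$-cofinite by the standing hypothesis on $V$; Lemma \ref{inclusion} then gives that $W$ itself is $C_1$-cofinite. This direction uses none of the deep input.

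The substance is the converse: a lower bounded $C_1$-cofinite module $W$ should have finite length. Since $W$ is $C_1$-cofinite, $W^{top}$ is finite dimensional, and more generally a spanning argument using $\dim V_{(n)} < \infty$ shows that lifts of a finite homogeneous basis of $W/C_1(W)$ generate $W$ over $V$; thus $W$ is finitely generated and, being lower bounded, grading-restricted. Choosing homogeneous generators $w_1, \dots, w_r$ and letting $\jiao{w_i}$ be the finite-dimensional $A(V)$-module each generates, the universal property recalled in Subsection \ref{GVM} yields a surjection $\bigoplus_i F(\jiao{w_i}) \twoheadrightarrow W$, where each $F(\jiao{w_i})$ is grading-restricted because its top is finite dimensional. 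Since finite length passes to finite direct sums and to quotients, it therefore suffices to prove that $F(U)$ is of finite length for every finite-dimensional $A(V)$-module $U$.

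Before addressing length I would check that the irreducible subquotients occurring are genuinely \emph{ordinary}, as the definition of finite length demands. Any such subquotient $L$ is grading-restricted, being a subquotient of a grading-restricted module. Writing $L_0 = S + N$ with $S$ semisimple and $N$ its locally nilpotent part, the identity $[L_0, v_m] = (\wt v - m - 1)v_m$, valid because $V$ is genuinely $L_0$-graded, together with the analogous relation for $S$ forces $[N, v_m] = 0$; hence $N$ is a module endomorphism of $L$. As $L$ has countable dimension, Schur's lemma makes $N$ a nilpotent scalar, so $N = 0$ and $L$ is ordinary. Thus the only remaining issue is genuinely the finiteness of the length.

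The hard part will be exactly the finite length of the grading-restricted generalized Verma module $F(U)$. Finite generation and grading-restriction by themselves do not rule out an infinite strictly ascending chain of submodules whose successive quotients are irreducible ordinary modules of increasing real conformal weight, so an additional chain condition is needed to terminate the filtration. This is precisely the input not supplied by $C_1$-cofiniteness of the irreducible ordinary modules alone, which is why the equivalence is stated here as a conjecture rather than a theorem. Imposing it as a hypothesis -- that all grading-restricted generalized Verma modules are of finite length -- removes the obstacle: under that assumption each $F(\jiao{w_i})$ is of finite length, so the surjection $\bigoplus_i F(\jiao{w_i}) \twoheadrightarrow W$ exhibits $W$ as a finite length module, completing the equivalence and upgrading the conjecture to the corresponding theorem.
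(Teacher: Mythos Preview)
Your framing is right: this is stated as a conjecture, and the paper does not prove it outright but rather establishes it under the additional hypothesis that every grading-restricted generalized Verma module has finite length (Theorem~\ref{maintheorem}). Your proposal correctly proves the easy direction via Lemma~\ref{inclusion}, correctly identifies this same extra hypothesis as what is needed for the converse, and your Schur-type argument that irreducible grading-restricted subquotients are automatically ordinary is a correct point the paper leaves implicit.

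There is, however, a gap in your reduction. The homogeneous lifts $w_1,\dots,w_r$ of a basis of $W/C_1(W)$ need not lie in $W^{top}$, and there is no reason for the operators in $\widehat{V}(i)$, $i>0$, to annihilate them; the universal property in Subsection~\ref{GVM} produces a map $F(U)\to M$ only when $M^{top}=U$, so for an arbitrary homogeneous $w_i$ you cannot manufacture a map $F(\jiao{w_i})\to W$ hitting $w_i$, and the claimed surjection $\bigoplus_i F(\jiao{w_i})\twoheadrightarrow W$ is not justified. (Relatedly, $\jiao{w_i}$ is only an $A(V)$-module, rather than merely a $\widehat{V}(0)$-module, when $w_i$ is such a highest-weight vector.) The paper circumvents this by iterating from the top rather than working in one shot: take an irreducible $\widehat{V}(0)$-submodule of $W^{top}$, let $W_1$ be the highest-weight submodule it generates (this \emph{is} a quotient of a grading-restricted generalized Verma module), pass to $W/W_1$, and repeat. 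Since $W^{top}\cap C_1(W)=0$, Lemma~\ref{qu} gives $\dim(W/W_1)/C_1(W/W_1)<\dim W/C_1(W)$, so the process terminates in a finite filtration with highest-weight subquotients, which the extra hypothesis then refines to a composition series. Your one-shot surjection should be replaced by this inductive descent.
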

\begin{remark}
One direction of Conjecture \ref{mainconjecture} is obvious, that is, under the assumption that all the irreducible ordinary $V$-modules are $C_1$-cofinite, if $W$ is of finite length, $W$ is $C_1$-cofinite by Lemma \ref{inclusion}.
\end{remark}

\begin{remark}
Let $V$ be a homomorphic image $\widetilde{V_k(\frg)}$ of the universal affine vertex operator algebra. Then all its ordinary irreducible modules are $C_1$-cofinite.
\end{remark}

Let $\cO^{fin}$ be the category of finite length generalized modules. Assume Conjecture \ref{mainconjecture} holds, i.e., $\cO^{fin}$ is the same as the category of lower bounded $C_1$-cofinite modules, we can establish the braided tensor category structure on $\cO^{fin}$ using Huang-Lepowsky-Zhang's logarithmic tensor category theory:
\begin{theorem}\label{tensorcategory}
Suppose Conjecture \ref{mainconjecture} holds, then $\cO^{fin}$ has a braided tensor category structure with the tensor product bifunctor $\boxtimes_{P(1)}$, the unit object $V$ and the braiding isomorphism, the associativity isomorphism, the left and right unit isomorphisms given in \cite[Sec.~12.2]{HLZ8}.
\end{theorem}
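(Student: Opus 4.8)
The plan is to produce the braided tensor category structure on $\cO^{fin}$ by feeding it into the logarithmic tensor category theory of Huang, Lepowsky and Zhang, so the entire argument reduces to checking the hypotheses of that theory: the closure and grading conditions \cite[Assump.~10.1]{HLZ6} and \cite[Assump.~12.1]{HLZ8}, together with the two conditions of Theorem \ref{assiso}, which by \cite[Thm.~3.1]{H6} stand in for the associativity assumption \cite[Assump.~12.2]{HLZ8}. Throughout I would use the dictionary provided by Conjecture \ref{mainconjecture}: $\cO^{fin}$ is at once the category of finite length generalized modules and the category of lower bounded $C_1$-cofinite modules, using the former description for the formal closure statements and the latter for the analytic input.

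First I would dispose of the closure and grading conditions. Closure of $\cO^{fin}$ under finite direct sums, submodules, quotients and contragredients is immediate from the finite length description (a submodule or quotient of a finite length module again has finite length, and the contragredient functor preserves length while dualizing the ordinary irreducible composition factors into ordinary irreducible ones), and grading-restriction is automatic since finite length modules are grading-restricted; this covers the non-tensor part of \cite[Assump.~10.1]{HLZ6} and all of \cite[Assump.~12.1]{HLZ8}. I would also record the observation, used repeatedly below, that a grading-restricted generalized Verma module $F(U)$ with $\dim U<\infty$ is $C_1$-cofinite, since the creation operators producing $F(U)$ above its top all lie in $C_1(F(U))$ and hence $F(U)/C_1(F(U))\cong F(U)^{top}=U$; by Conjecture \ref{mainconjecture} such an $F(U)$ is therefore of finite length and lies in $\cO^{fin}$.

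Next I would verify the two conditions of Theorem \ref{assiso}. For condition~(1), given $W_1,W_2\in\cO^{fin}$, a nonzero $z\in\CC^\times$, and a lower bounded $W_\lambda\subseteq (W_1\otimes W_2)^*$ generated by a generalized $L'_{P(z)}(0)$-eigenvector $\lambda$ satisfying the $P(z)$-compatibility condition, I would show that $W_\lambda$ is $C_1$-cofinite; by Conjecture \ref{mainconjecture} this places $W_\lambda$ in $\cO^{fin}$. The mechanism is a spanning argument in the spirit of the $C_1$-cofiniteness techniques of \cite{H5}: the $P(z)$-compatibility condition rewrites each action $\tau_{P(z)}(v\otimes t^n)\lambda$ in terms of the module structures of $W_1$ and $W_2$ via formula \eqref{defofpz}, and the finite dimensionality of $W_1^{top}$ and $W_2^{top}$ (a consequence of $C_1$-cofiniteness, see Subsection \ref{GVM}) forces all but finitely many contributions into $C_1(W_\lambda)$, so that $W_\lambda/C_1(W_\lambda)$ is finite dimensional. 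The same argument applied to the full space shows that $W_1\hboxtr_{P(z)}W_2$ is lower bounded with finite dimensional generalized weight spaces and $C_1$-cofinite, hence of finite length; thus $W_1\hboxtr_{P(z)}W_2\in\cO^{fin}$, and by the theorem of \cite{HLZ4} recalled above the $P(z)$-tensor product exists in $\cO^{fin}$, completing \cite[Assump.~10.1]{HLZ6}. For condition~(2), the convergence and extension property of products and iterates of intertwining operators follows from the $C_1$-cofiniteness of all objects of $\cO^{fin}$: $C_1$-cofiniteness forces the relevant matrix coefficients to satisfy systems of ordinary differential equations with regular singular points, and the standard analysis of such equations (as in \cite{HLZ7}) yields both the convergence of these series and their single valued extension past the singularities.

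Finally, with the closure and grading conditions and conditions~(1)--(2) of Theorem \ref{assiso} in place, the logarithmic tensor category theorem of \cite{HLZ8} --- with the associativity isomorphism supplied by Theorem \ref{assiso} --- endows $\cO^{fin}$ with a braided tensor category structure whose tensor product is $\boxtimes_{P(1)}$, whose unit object is $V$, and whose braiding, associativity and unit isomorphisms are those of \cite[Sec.~12.2]{HLZ8}. I expect the genuine obstacle to lie in the conditions of Theorem \ref{assiso}, and especially in the convergence and extension property: securing the differential equations with regular singular points out of $C_1$-cofiniteness and controlling the analytic continuation is the real content, whereas the closure, grading and finite length statements are essentially formal once the equivalence of Conjecture \ref{mainconjecture} is available. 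The $C_1$-cofiniteness of $W_\lambda$ and of the tensor products in condition~(1) is the second delicate point, since it is precisely here, in the possibly non-semisimple setting, that the finite length / $C_1$-cofinite dictionary is indispensable.
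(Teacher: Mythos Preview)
Your overall architecture matches the paper's: verify the closure conditions of \cite[Assump.~10.1]{HLZ6}, then the two hypotheses of Theorem~\ref{assiso}, and conclude via \cite[Thm.~12.15]{HLZ8}. Your treatment of closure under subquotients and contragredients, and of condition~(2) via regular singular differential equations (\cite{H2}, \cite{HLZ7}), is essentially the same as the paper's.

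The gap is in your handling of condition~(1) and, relatedly, closure under $P(z)$-tensor product. You propose to show directly that $W_\lambda$ is $C_1$-cofinite by a ``spanning argument in the spirit of \cite{H5}'' using the compatibility condition and the finite dimensionality of $W_1^{top}$, $W_2^{top}$. This is not justified as stated: $W_\lambda$ sits as a \emph{submodule} of $(W_1\otimes W_2)^*$, and $C_1$-cofiniteness passes to quotients, not to submodules, so there is no evident mechanism forcing $W_\lambda/C_1(W_\lambda)$ to be finite dimensional from that inclusion alone. The paper's argument instead dualizes: the inclusion $W_\lambda\subset (W_1\otimes W_2)^*$ corresponds via \cite[Prop.~5.24]{HLZ4} and \cite[Prop.~4.8]{HLZ3} to a \emph{surjective} intertwining operator of type ${W_\lambda'\choose W_1\, W_2}$, and then Miyamoto's theorem \cite{Mi} (cf.\ \cite[Cor.~2.14]{CMY1}) gives that $W_\lambda'$ is $C_1$-cofinite; Conjecture~\ref{mainconjecture} then puts $W_\lambda'\in\cO^{fin}$, hence $W_\lambda\in\cO^{fin}$. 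The same citation of \cite{Mi} handles closure under $\boxtimes_{P(z)}$ (including lower boundedness of the tensor product), which you assert by ``the same argument'' without supplying one. In short, the missing ingredient is the contragredient/surjective-intertwining-operator trick together with Miyamoto's $C_1$-cofiniteness result; once you insert that, your sketch becomes the paper's proof.
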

\begin{proof}
The proof is a natural generalization of those of \cite[Cor.~4.1.7, Thm.~4.2.5]{CJORY} for the Virasoro algebra case.

It is clear that the category $\cO^{fin}$ is closed under taking finite direct sums, submodules, quotients and contragredient duals. Take $W_1, W_2 \in \cO^{fin}$, then by Conjecture \ref{mainconjecture}, $W_1, W_2$ are grading restricted $C_1$-cofinite modules. In \cite{Mi}, Miyamoto proved that $W_1\boxtimes_{P(z)} W_2$ is lower bounded $C_1$-cofinite, thus $W_1\boxtimes_{P(z)} W_2 \in \cO^{fin}$. The category $\cO^{fin}$ is closed under $P(z)$-tensor product.

Since all the objects in $\cO^{fin}$ are grading-restricted $C_1$-cofinite modules, from \cite{H2}, matrix elements of products and iterates of logarithmic intertwining operators satisfy certain differential equations with regular singular singularities. By \cite{HLZ7}, the convergence and extension properties needed in the associativity isomorphism holds for $\cO^{fin}$ (see also Theorem \ref{assiso} (2)).

It remains to show Condition (1) in Theorem \ref{assiso} holds for $\cO^{fin}$. Let $W_{\lambda}$ be the lower bounded generalized $V$-module constructed in Theorem \ref{assiso}. The inclusion map $W_{\lambda} \subset (W_1\otimes W_2)^*$ intertwines the action of $V \otimes \iota_{+}{\mathbb C}[t,t^{- 1}, (z^{-1}-t)^{-1}]$ given by $\tau_{P(z)}$, therefore by \cite[Prop.~5.24]{HLZ4}, it corresponds to a $P(z)$-intertwining map $I$ of type ${W_{\lambda}' \choose W_1\,W_2}$ such that for an arbitrary element $w' \in W_{\l}$
\[
w'(w_1\otimes w_2)=\langle w', I(w_1\otimes w_2)\rangle.
\]
By \cite[Prop.~4.8]{HLZ3}, there is an intertwining
operator $\mathcal{Y}$ of type ${W_{\lambda}' \choose W_1\,W_2}$ such that
\[
\mathcal{Y}(w_1, z)w_2 = I(w_1\otimes w_2).
\]
This intertwining operator is surjective since
\[
\langle w', \mathcal{Y}(w_1, z)w_2\rangle = \langle w', I(w_1\otimes w_2)\rangle = w'(w_1\otimes w_2).
\]
By \cite{Mi} (cf. \cite[Cor.~2.14]{CMY1}), $W_{\lambda}'$ is $C_1$-cofinite since $W_1$ and $W_2$ are $C_1$-cofinite. Then it follows from Conjecture \ref{mainconjecture} that $W_{\lambda}' \in \cO^{fin}$, thus $W_{\lambda} \in \cO^{fin}$. We verified condition (1) in Theorem \ref{assiso}. Therefore, the associativity isomorphisms hold for $\cO^{fin}$.

Now that \cite[Assump.~10.1]{HLZ6} and associativity axiom hold, it follows from \cite[Thm.~12.15]{HLZ8} that $\cO^{fin}$ is braided tensor.
\end{proof}


In the following theorem, we give a sufficient condition for an arbitrary vertex operator algebra in order for Conjecture \ref{mainconjecture} to be true:

\begin{theorem}\label{maintheorem}
Let $V$ be a vertex operator algebra such that all the irreducible ordinary $V$-modules are $C_1$-cofinite. If all the grading-restricted generalized Verma modules for $V$ are of finite length, then Conjecture \ref{mainconjecture} holds. Moreover, the category $\cO^{fin}$ has a braided tensor category structure.
\end{theorem}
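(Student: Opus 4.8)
The plan is to prove that Conjecture \ref{mainconjecture} holds under the stated hypotheses, after which the braided tensor category structure on $\cO^{fin}$ follows immediately from Theorem \ref{tensorcategory}. Since one direction of the conjecture is already noted in the remark (finite length implies $C_1$-cofinite via Lemma \ref{inclusion}), the entire content is the converse: if $W$ is a lower bounded $C_1$-cofinite module, then $W$ has finite length. First I would reduce to understanding the top of $W$. Because $W$ is lower bounded and $C_1$-cofinite, the space $W^{top}$ from Subsection \ref{GVM} is finite dimensional, and $W^{top}\cap C_1(W)=0$ as recorded there.

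The key step is an induction argument on the structure of $W$ using its lowest graded pieces. I would pick a nonzero homogeneous $w\in W^{top}$ of minimal real part of weight and consider the highest weight submodule $V\cdot w\subseteq W$. By the universal property of the generalized Verma module, there is a surjection $F(\jiao{w})\twoheadrightarrow V\cdot w$, and since $W^{top}$ is finite dimensional, $F(\jiao{w})$ is a grading-restricted generalized Verma module. By hypothesis \emph{every} grading-restricted generalized Verma module is of finite length, so $F(\jiao{w})$ has finite length, and therefore its quotient $V\cdot w$ has finite length as well. The plan is then to pass to the quotient $W/(V\cdot w)$: by Lemma \ref{qu} the quotient remains $C_1$-cofinite, and it is still lower bounded, so the induction hypothesis or a termination argument applies to it.

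The main obstacle, and the step requiring the most care, is ensuring this process terminates, i.e.\ controlling that one cannot keep peeling off highest weight submodules forever. The natural quantity to track is the finite-dimensional quotient $W/C_1(W)$: I would argue that each time one splits off a highest weight submodule generated by a top vector, the image of $W^{top}$ in the successive quotient strictly decreases, using $W^{top}\cap C_1(W)=0$ to see that top vectors contribute nontrivially to $W/C_1(W)$. Since $\dim W/C_1(W)<\infty$ is preserved under quotients (Lemma \ref{qu}) and the dimension drops at each stage, the procedure halts after finitely many steps. Combining the finitely many finite-length pieces $V\cdot w$ shows $W$ itself has a finite composition series whose factors are the simple composition factors of the finite-length generalized Verma modules; since those factors are irreducible ordinary modules, $W$ is of finite length in the required sense.

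Once the converse inclusion is established, the two categories coincide: the category of lower bounded $C_1$-cofinite modules equals $\cO^{fin}$. This is precisely the content of Conjecture \ref{mainconjecture} under the present hypotheses, so the conjecture holds. The final assertion that $\cO^{fin}$ carries a braided tensor category structure is then an immediate consequence of Theorem \ref{tensorcategory}, whose only input is the validity of Conjecture \ref{mainconjecture}; no further verification of the Huang--Lepowsky--Zhang assumptions is needed beyond what was already carried out in the proof of that theorem.
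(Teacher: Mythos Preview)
Your proposal is correct and follows essentially the same route as the paper: build a finite filtration of $W$ by successively peeling off highest weight submodules generated from $W^{top}$, use that each such submodule is a quotient of a grading-restricted generalized Verma module (hence of finite length by hypothesis), and control termination via the strict drop in $\dim W/C_1(W)$ at each step. The only small points the paper makes more explicit are choosing an irreducible $\widehat{V}(0)$-submodule of $W^{top}$ rather than a cyclic one, and invoking the fact (from \cite[Lem.~3.1.1]{CJORY}) that a lower bounded module with $W=C_1(W)$ must be zero to conclude the final quotient vanishes.
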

\begin{proof} If $W$ is of finite length, then from Lemma \ref{inclusion} $W$ is $C_1$-cofinite.

Assume $W$ is a lower bounded $C_1$-cofinite $V$-module. Since $W$ is $C_1$-cofinite, $W^{top}$ is a finite dimensional $\widehat{V}(0)$-module. Take an irreducible $\widehat{V}(0)$-submodule in $W^{top}$, it generates a highest weight $V$-submodule $W_1$ of $W$. Apparently $W_1 \nsubseteq C_1(W)$ and $W_1$ is a quotient of a certain grading-restricted generalized Verma module.

From Lemma \ref{qu}
$$(W/W_1)/C_1(W/W_1)\cong W/(C_1(W)+W_1),$$
it is clear that $W/W_1$ is also a lower bounded $C_1$-cofinite module.
Following the previous procedure, we can pick up an irreducible $\widehat{V}(0)$-module in the top level space of $W/W_1$, and denote the highest weight module generated by this vector in $W/W_1$ by $\overline{W_2}$, it follows that there exists $W_2\subset W$ such that $W_1\subset W_2$ and $W_2/W_1\cong \overline{W_2}$.
Thus we have a sequence of epimorphisms
\begin{align*}
W\twoheadrightarrow W/W_1\twoheadrightarrow (W/W_1)/(W_2/W_1)\cong W/ W_2
\end{align*}
Continuing in this manner we obtain a sequence of epimorphisms
\begin{align*}
W\twoheadrightarrow W/W_1\twoheadrightarrow W/ W_2 \twoheadrightarrow W/W_3 \twoheadrightarrow \cdots \twoheadrightarrow W/W_{n-1}\twoheadrightarrow W/W_{n}.
\end{align*}
Note that for $i = 1, \dots, n-1$
\[
\dim (W/W_{i+1})/C_1(W/W_{i+1}) < \dim (W/W_{i})/C_1(W/W_{i}),
\]
the process ends in a finite number of steps since  dim $W/C_1(W)$ is finite. Moreover, the last step is such that $(W/W_{n})/C_1(W/W_{n})=0$ which implies that $W/W_{n}=0$ (see \cite[Lem.~3.1.1]{CJORY}).
Thus, we have obtained a sequence of grading-restricted generalized modules
\begin{equation}\label{highestweightfiltration}
0\hookrightarrow W_1 \hookrightarrow W_2\cdots \hookrightarrow W_{n-1}\hookrightarrow W_{n} = W
\end{equation}
such that $W_{i}/W_{i-1}\cong \overline{W_{i}}$ is a highest weight $V$-module.
Since $\ov{W_i}$ are all quotients of certain grading-restricted generalized Verma modules, they are of finite length by assumption. By refining the filtration (\ref{highestweightfiltration}) we obtain a finite composition series for $W$. Therefore $W \in \cO^{fin}$.
\end{proof}

\subsection{Tensor categories of affine Lie algebras}
We are interested in the following categories of modules for the simple affine vertex operator algebra $L_k(\frg)$:
\begin{itemize}
\item $\cO_k(\frg)$ (or simply $\cO_k$ if $\mathfrak{g}$ is clear): the category of grading-restricted generalized modules.
\item $KL_k(\mathfrak{g})$ (or simply $KL_k$): the category of finite length generalized modules.
\item $\cO_{k,{\rm ord}}(\frg)$ (or simply $\cO_{k,{\rm ord}}$): the category of ordinary modules.
\end{itemize}
It is clear that the categories $KL_k$ and $\cO_{k,{\rm ord}}$ are both full subcategories of the category $\cO_k$.
It is shown in \cite{Ar1} that if $k$ is admissible, then these categories are the same. In this paper, we are mainly interested in the levels $k$ that are neither generic nor admissible. We first have the following consequence of Theorem \ref{maintheorem}:

\begin{cor}\label{characterization}
If all the grading-restricted generalized Verma modules are of finite length, then an object $W$ is in $KL_k$ if and only if $W$ is a lower bounded $C_1$-cofinite module.
\end{cor}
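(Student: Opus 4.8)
The plan is to obtain this as an immediate specialization of Theorem \ref{maintheorem} to the simple affine vertex operator algebra $V = L_k(\frg)$. Indeed, by definition $KL_k$ is precisely the category $\cO^{fin}$ of finite length generalized modules in the case $V = L_k(\frg)$, so the asserted equivalence ``$W \in KL_k$ if and only if $W$ is lower bounded $C_1$-cofinite'' is nothing but the statement of Conjecture \ref{mainconjecture} for this particular choice of $V$. Thus it suffices to check that the two hypotheses of Theorem \ref{maintheorem} are met for $L_k(\frg)$.

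First I would verify that all irreducible ordinary $L_k(\frg)$-modules are $C_1$-cofinite. This is exactly the content of the remark preceding Theorem \ref{maintheorem}: since $L_k(\frg)$ is a homomorphic image $\widetilde{V_k(\frg)}$ of the universal affine vertex operator algebra, all its ordinary irreducible modules are $C_1$-cofinite. Second, the hypothesis of the corollary asserts that all grading-restricted generalized Verma modules for $L_k(\frg)$ are of finite length; in the notation of Subsection \ref{GVM} these are the modules $F(U)$ attached to finite-dimensional $A(L_k(\frg))$-modules $U$. With both hypotheses in hand, Theorem \ref{maintheorem} applies and yields Conjecture \ref{mainconjecture} for $L_k(\frg)$, which is the desired characterization.

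Since the substantive argument --- the construction of the highest weight filtration (\ref{highestweightfiltration}) via iterated extraction of top-level $\widehat{V}(0)$-submodules, together with the descending induction on $\dim W/C_1(W)$ controlled by Lemma \ref{qu} --- is already carried out in the proof of Theorem \ref{maintheorem}, there is essentially no independent difficulty here. The only point that I would state with a little care is the matching of terminology: one must confirm that the affine-type grading-restricted generalized Verma modules (the modules $V_k(\lambda)$ with $\lambda \in P_+$, equivalently $F(E^\lambda)$ by the remark in Subsection \ref{GVM}) are exactly the grading-restricted $F(U)$ featuring in the hypothesis of Theorem \ref{maintheorem}, so that the finite length assumption transfers verbatim and the conclusion follows at once.
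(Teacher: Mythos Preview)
Your proposal is correct and matches the paper's treatment: the paper presents Corollary \ref{characterization} without a separate proof, simply as ``the following consequence of Theorem \ref{maintheorem},'' and your argument is exactly that specialization, invoking the remark that all ordinary irreducible modules for a homomorphic image of $V_k(\frg)$ are $C_1$-cofinite. Your added care in identifying the affine grading-restricted generalized Verma modules $V_k(\lambda)=F(E^\lambda)$, $\lambda\in P_+$, with the abstract $F(U)$ of Subsection \ref{GVM} is appropriate and harmless, though the paper leaves this implicit.
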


\begin{remark}
The assumption in Corollary \ref{characterization} is related to the famous Kazhdan-Lusztig polynomials. For $\lambda \in P_{+}$, V. Deodhar, O. Gabbar and V. Kac (\cite{DGK}) proved that the highest weight of any irreducible subquotient of the Verma module with highest weight $\lambda$ has the form $w \circ \lambda$ for some $w \in W$, where $w\circ \lambda = w(\lambda + \rho) - \rho$ and $W$ is the affine Weyl group (see also \cite{KK}). The multiplicities are given by the Kazhdan-Lusztig polynomials, which were conjectured in \cite{DGK} and proved by M. Kashiwara \cite{Kas}, Kashiwara-Tanisaki \cite{KasT} and by Casian \cite{Ca}.
\end{remark}

As a consequence of Theorem \ref{tensorcategory} and Corollary \ref{characterization}, we have:
\begin{theorem}\label{mainresult}
If all the grading-restricted generalized Verma modules are of finite length, then $KL_k$ has a braided tensor category structure.
\end{theorem}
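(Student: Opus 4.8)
The plan is to apply the general machinery of Theorem \ref{maintheorem} (equivalently, the combination of Theorem \ref{tensorcategory} with Corollary \ref{characterization}) to the concrete vertex operator algebra $V = L_k(\frg)$, after identifying the category $KL_k$ with the abstract category $\cO^{fin}$ of finite length generalized $V$-modules. By definition $KL_k(\frg)$ is exactly the category of finite length generalized $L_k(\frg)$-modules, so this identification is immediate and the theorem reduces to checking the hypotheses of Theorem \ref{maintheorem} for this choice of $V$.

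First I would verify the standing hypothesis of Theorem \ref{maintheorem}, namely that every irreducible ordinary $L_k(\frg)$-module is $C_1$-cofinite. Since $L_k(\frg)$ is a homomorphic image of the universal affine vertex operator algebra $V_k(\frg)$, this is precisely the content of the Remark preceding Theorem \ref{maintheorem}: for any such image every ordinary irreducible module is $C_1$-cofinite. Concretely this reflects the fact that an ordinary module is induced from its finite dimensional top level and that $\frg \otimes t^{-1}$ already exhausts the relevant degree-one part, so that the quotient by $C_1(W)$ is finite dimensional.

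The remaining hypothesis of Theorem \ref{maintheorem}, that all grading-restricted generalized Verma modules for $L_k(\frg)$ are of finite length, is exactly the assumption of the present theorem. Here the grading-restricted generalized Verma modules in the sense of Subsection \ref{GVM} are, for $\lambda \in P_+$, the modules $F(E^\lambda)$, i.e. the images of the affine generalized Verma modules $V_k(\lambda)$ in the category of $L_k(\frg)$-modules, via the Remark identifying the two constructions. Feeding both hypotheses into Theorem \ref{maintheorem} yields that Conjecture \ref{mainconjecture} holds for $V = L_k(\frg)$; equivalently, by Corollary \ref{characterization}, the lower bounded $C_1$-cofinite modules coincide with the objects of $KL_k$. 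Theorem \ref{tensorcategory} then endows $\cO^{fin} = KL_k$ with the Huang--Lepowsky--Zhang braided tensor category structure, with tensor product bifunctor $\boxtimes_{P(1)}$ and unit object $L_k(\frg)$, completing the argument.

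I do not expect a genuine obstacle at this final step, since all of the analytic input — Miyamoto's $C_1$-cofiniteness of $P(z)$-tensor products, the convergence and extension property, and the verification of \cite[Assump.~10.1]{HLZ6} and the associativity axiom — has already been absorbed into Theorems \ref{tensorcategory} and \ref{maintheorem}. The only points demanding care are bookkeeping: confirming that the irreducible ordinary modules of the simple affine vertex operator algebra really are $C_1$-cofinite, and correctly matching the vertex-operator-algebraic notion of grading-restricted generalized Verma module with the affine modules $V_k(\lambda)$, so that the finite-length hypothesis is exactly the one required to invoke the general result.
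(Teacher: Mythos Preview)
Your proposal is correct and follows essentially the same route as the paper: the paper presents Theorem \ref{mainresult} as an immediate consequence of Theorem \ref{tensorcategory} and Corollary \ref{characterization} (equivalently, of Theorem \ref{maintheorem}), after noting via the preceding Remark that all ordinary irreducible modules for a homomorphic image of $V_k(\frg)$ are $C_1$-cofinite. Your write-up is more explicit about the bookkeeping, but the logical content is identical.
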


\begin{remark}
We have derived the results for $KL_k$, the category of finite length generalized modules for the simple affine vertex operator algebra $L_k(\frg)$, but all the results in this section also hold for the other affine vertex operator algebras that are quotients of the universal affine vertex operator algebras $V_k(\frg)$.
\end{remark}

Theorem \ref{mainresult} applies to all the known results on the tensor category structure of affine Lie algebras so far, in particular, it gives another proof of the tensor category structure constructed in \cite{KL1}-\cite{KL5} (see also \cite{Zh})
\begin{cor}
Let $k + h^{\vee} \notin \QQ_{\geq 0}$, then the category of finite length $L_k(\mathfrak{g})$-modules with simple composition factors isomorphic to $L_k(\lambda)$ for $\lambda \in P_+$ has a braided tensor category structure.
\end{cor}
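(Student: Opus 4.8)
The plan is to identify the category in the statement with $KL_k = KL_k(\frg)$ and then invoke Theorem \ref{mainresult}. An irreducible ordinary $L_k(\frg)$-module is a highest weight module $L_k(\lambda)$ whose top $E^\lambda$ is finite-dimensional, i.e.\ $\lambda \in P_+$; hence the category of finite length $L_k(\frg)$-modules whose composition factors are the $L_k(\lambda)$ with $\lambda \in P_+$ is precisely $KL_k$. By Theorem \ref{mainresult} it therefore suffices to show that every grading-restricted generalized Verma module has finite length. First I would note that $k+h^\vee \notin \QQ_{\geq 0}$ excludes the critical level, so $V_k(\frg)$ is a vertex operator algebra and $C_1$-cofiniteness of irreducible ordinary modules is automatic; moreover the generalized Verma modules for $L_k(\frg)$ are quotients of the universal $\widehat\frg$-modules $V_k(\lambda) = F(E^\lambda)$ of Subsection \ref{GVM}, so it is enough to prove that each $V_k(\lambda)$, $\lambda \in P_+$, has finite length.

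To control the composition factors $L_k(\mu)$ of $V_k(\lambda)$ I would use two inputs. Every such factor is grading-restricted, so $\mu \in P_+$; by the Deodhar--Gabber--Kac linkage theorem (see the remark after Corollary \ref{characterization}), $\mu$ lies in the $\widehat{W}$-linkage class of $\lambda$ for the dot action at level $k$; and since $L_k(\mu)$ occurs as a subquotient of the lower-bounded module $V_k(\lambda)$, whose $L_0$-grading begins at $h_\lambda = \frac{(\lambda,\lambda+2\rho)}{2(k+h^\vee)}$, its conformal weight satisfies $h_\mu \geq h_\lambda$. I would then split into the two cases comprising $k+h^\vee \notin \QQ_{\geq 0}$. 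If $k+h^\vee \notin \QQ$, then for a real affine root $\alpha + n\delta$ the integrality condition $\frac{2[(\lambda+\rho,\alpha)+n(k+h^\vee)]}{(\alpha,\alpha)} \in \ZZ$ forces $\frac{2n(k+h^\vee)}{(\alpha,\alpha)} \in \ZZ$, hence $n = 0$; the integral Weyl group is the finite $W$, the linkage class is $W \circ \lambda$, and its only dominant member is $\lambda$, so $V_k(\lambda) = L_k(\lambda)$ is irreducible. If $k+h^\vee \in \QQ_{<0}$, then $h_\mu \geq h_\lambda$ rearranges (the inequality flipping because $k+h^\vee < 0$) to $|\mu+\rho|^2 \leq |\lambda+\rho|^2$, confining $\mu+\rho$ to a ball and leaving only finitely many $\mu \in P_+$. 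In either case there are finitely many distinct composition factors, each of finite multiplicity since each occurrence of $L_k(\mu)$ supplies a highest weight vector in the finite-dimensional weight space of $V_k(\lambda)$ of finite weight $\mu$ at conformal grade $h_\mu$; thus $V_k(\lambda)$ has finite length.

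The hard part will be the negative rational case, where the integral affine Weyl group is genuinely infinite and the linkage class of $\lambda$ is not finite. Here the grading constraint $h_\mu \geq h_\lambda$ is indispensable: it is exactly the negativity of $k+h^\vee$ that converts this constraint into the compactness bound $|\mu+\rho|^2 \leq |\lambda+\rho|^2$, which cuts the infinite orbit down to finitely many dominant weights. Once finite length of all $V_k(\lambda)$ is established, the same bound applied to $\lambda = 0$ shows $V_k(0)=L_k(0)$, so $V_k(\frg)=L_k(\frg)$ and the category is as described; Theorem \ref{mainresult} then furnishes the braided tensor category structure on $KL_k$, completing the proof.
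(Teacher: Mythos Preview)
Your overall strategy coincides with the paper's: verify the hypothesis of Theorem \ref{mainresult} that every grading-restricted generalized Verma module $V_k(\lambda)$, $\lambda\in P_+$, has finite length. The paper's proof simply cites \cite[Prop.~2.14]{KL1} for this and is done in two lines; you instead reprove that proposition from scratch via the Deodhar--Gabber--Kac linkage in the irrational case and the conformal-weight bound $|\mu+\rho|^2 \le |\lambda+\rho|^2$ in the negative rational case. Your argument is correct: in the irrational case the integral affine Weyl group collapses to the finite $W$ and $\lambda$ is the unique dominant element of $W\circ\lambda$; in the negative rational case your inequality, together with $\mu\in P_+$, leaves finitely many $\mu$, and finite multiplicity follows from finite-dimensionality of weight spaces. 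So you recover exactly what the paper imports from Kazhdan--Lusztig, at the cost of a page of argument but with the benefit of being self-contained.

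One remark: the final step where you deduce $V_k(\frg)=L_k(\frg)$ from the $\lambda=0$ bound is correct (for $\mu\in P_+$ the inequality $|\mu+\rho|^2\le|\rho|^2$ forces $|\mu|^2+2(\mu,\rho)\le 0$, hence $\mu=0$, and the top weight space is one-dimensional), but it is not needed for the conclusion. By the remark following Theorem \ref{mainresult}, the result applies to any quotient of $V_k(\frg)$, and the generalized Verma modules for $L_k(\frg)$ are quotients of the $V_k(\lambda)$, hence inherit finite length. So you may safely drop that paragraph.
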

\begin{proof} By \cite[Prop.~2.14]{KL1}, the generalized Verma module $V_k(\lambda)$ for $\lambda \in P_+$ is of finite length. Then the conclusion follows from Theorem \ref{mainresult}.
\end{proof}

Another important application of Theorem \ref{mainresult} is as follows:
\begin{cor}\label{main}
Assume the category $\cO_{k,{\rm ord}}$ of ordinary $L_k(\frg)$-modules is semisimple, then the category $KL_k$ has a natural structure of braided tensor category.
\end{cor}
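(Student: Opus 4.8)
The plan is to deduce the statement from Theorem \ref{mainresult} by showing that semisimplicity of $\cO_{k,{\rm ord}}$ forces every grading-restricted generalized Verma module for $L_k(\frg)$ to be of finite length. First I would recall that such a module is $F(U)$ for a finite-dimensional $A(L_k(\frg))$-module $U$, and reduce to the case of simple top: choosing a composition series $0 = U_0 \subset \cdots \subset U_m = U$ of $U$ as an $A(L_k(\frg))$-module with simple quotients $E^{\lambda_j}$ (necessarily with $\lambda_j \in P_+$), the exactness of the induction $M(-)$ together with the fact that $F(U)$ is the maximal weak-module quotient of $M(U)$ yields a filtration of $F(U)$ whose successive subquotients are quotients of the $F(E^{\lambda_j})$. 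Hence it suffices to prove that each $F(E^\lambda)$ with $\lambda \in P_+$ is of finite length.

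Next I would observe that $F(E^\lambda)$ is in fact an \emph{ordinary} module, so that the semisimplicity hypothesis applies to it. Indeed, $F(E^\lambda)$ is a quotient of the affine generalized Verma module $V_k(\lambda)$, on which $L_0$ acts as a scalar on each homogeneous subspace; this semisimple action descends to the quotient, and since $\lambda \in P_+$ the resulting module is grading-restricted and lower-bounded. Therefore $F(E^\lambda) \in \cO_{k,{\rm ord}}$, and semisimplicity gives a decomposition $F(E^\lambda) \cong \bigoplus_{i} L_k(\mu_i)$ into simple ordinary modules.

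Finally I would invoke finite generation to cut the sum down to finitely many summands. The module $F(E^\lambda)$ is generated by its finite-dimensional top level $E^\lambda = F(E^\lambda)^{top}$, so a finite generating set lies in some finite partial sum $\bigoplus_{i \in S} L_k(\mu_i)$, which is then a submodule containing the generators and hence equals $F(E^\lambda)$. Thus $F(E^\lambda)$ is a finite direct sum of simple ordinary modules, in particular of finite length. Combining this with the reduction above, all grading-restricted generalized Verma modules for $L_k(\frg)$ are of finite length, and Theorem \ref{mainresult} then supplies the braided tensor category structure on $KL_k$.

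The step I expect to require the most care is the reduction to simple top, namely verifying that the $A(L_k(\frg))$-composition series of $U$ really induces a filtration of $F(U)$ whose subquotients are quotients of the $F(E^{\lambda_j})$; this uses the exactness of the induction $M(-)$ and the maximality property defining $F$, and is the one place where one must be attentive, since $F$ itself need not be exact. The conceptual heart of the argument, that the relevant generalized Verma modules are ordinary and that semisimplicity together with finite generation yields finiteness, is then routine.
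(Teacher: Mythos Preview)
Your proof is correct and follows the same line as the paper's (implicit) argument: the grading-restricted generalized Verma modules are ordinary, hence semisimple by hypothesis, hence of finite length by finite generation, so Theorem \ref{mainresult} applies.

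One simplification is available. Your reduction to simple top is unnecessary in the affine setting: the paper identifies the grading-restricted generalized Verma modules for $L_k(\frg)$ precisely with the $V_k(\lambda)=F(E^\lambda)$ for $\lambda\in P_+$ (see the remark in Section~2.3 and the proof of Theorem~\ref{maintheorem}, which only ever uses $F(U)$ with $U$ irreducible). So the delicate filtration argument you flag as needing care can simply be omitted. In fact, once you know $F(E^\lambda)$ is ordinary and hence completely reducible, the observation that it is generated by its \emph{irreducible} top level $E^\lambda$ forces it to be irreducible, not merely of finite length: the unique direct summand whose top meets $E^\lambda$ already contains the generator.
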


The category $\cO_{k,{\rm ord}}$ is semisimple when $k\in \ZZ_{\geq 0}$, $k+h^{\vee} \notin \QQ$ and $k$ admissible (\cite{FZ}, \cite{KL1}, \cite{Ar1}). Therefore Corollary \ref{main} gives another proof of the existence of the tensor category structure on $KL_k$ for these $k$. In the following section, we will investigate more such examples.

\section{Tensor category of \texorpdfstring{$KL_k$}{}}
In the previous section, we have proved that if all the grading-restricted generalized Verma modules are of finite length, then there is a braided tensor category structure on the category $KL_k$. In particular, if the category $\cO_{k,{\rm ord}}$ is semisimple, then $KL_k$ is a braided tensor category. In this section, we will provide several families of affine Lie algebras at certain levels such that the assumption above holds, and hence $KL_k$ has a tensor category structure. Our technique is mainly via minimal $W$-algebras, which we will briefly introduce here.

\subsection{Minimal $W$-algebras}\label{sec:minimal}
A universal $W$-algebra is a vertex algebra $\mathcal{W}^k(\mathfrak{g}, f)$ associated to the triple $(\mathfrak{g}, f, k)$, where $\mathfrak{g}$ is a basic simple Lie superalgebra with a non-degenerate invariant supersymmetric bilinear form $(\cdot, \cdot)$, $f$ is a nilpotent element of $\mathfrak{g}_{\overline{0}}$, and $k \in \CC$, by applying the quantum Hamiltonian reduction functor $H_f$ to a complex associated to the affine vertex superalgebra $V_k(\mathfrak{g})$ \cite{KRW, KW}. In particular, it was shown that, for $k$ non-critical and $f$ part of an $\mathfrak{sl}_2$-triple, $\mathcal{W}^k(\mathfrak{g}, f)$ has a conformal vertex algebra structure with a conformal vector $\omega$. For $k$ non-critical the vertex algebra $\mathcal{W}^k(\mathfrak{g}, f)$ has a unique simple quotient, denoted by $\mathcal{W}_k(\mathfrak{g}, f)$.

Choose a Cartan subalgebra $\mathfrak{h}$ for $\mathfrak{g}_{\overline{0}}$ and let $\Delta$ be the set of roots. Fix a minimal root $-\theta$ of $\mathfrak{g}$. We may choose root vectors $e_{\theta}$ and $e_{-\theta}$ such that
\[
[e_{\theta}, e_{-\theta}] = x \in \mathfrak{h},\;\;\; [x, e_{\pm \theta}] = \pm e_{\pm \theta}.
\]
The eigenspace decomposition of ad $x$ gives a minimal $\frac{1}{2}\ZZ$-grading:
\[
\mathfrak{g} = \mathfrak{g}_{-1}\oplus \mathfrak{g}_{-1/2} \oplus \mathfrak{g}_{0} \oplus \mathfrak{g}_{1/2}\oplus \mathfrak{g}_{1},
\]
where $\mathfrak{g}_{\pm 1} = \CC e_{\pm \theta}$. Furthermore, one has
\[
\mathfrak{g}_0 = \mathfrak{g}^{\natural} \oplus \CC x,
\]
where
\[
\mathfrak{g}^{\natural} = \{a \in \mathfrak{g}_0|(a,x) = 0\}.
\]

It is proved in \cite{KW} that the universal minimal $W$-algebras of level $k$ corresponding to $f = e_{-\theta}$ for a minimal root $-\theta$, denoted by
\[
\mathcal{W}^k(\mathfrak{g}, \theta) = \mathcal{W}^k(\mathfrak{g}, e_{-\theta}),
\]
has a unique simple quotient, denoted by
\[
\mathcal{W}_k(\mathfrak{g}, \theta) = \mathcal{W}_k(\mathfrak{g}, e_{-\theta}),
\]
and that $\mathcal{W}^k(\mathfrak{g}, \theta)$ is freely generated by the elements $J^{\{a\}}$ ($a$ runs over a basis of $\mathfrak{g}^{\natural}$), $G^{\{u\}}$ ($u$ runs over a basis of $\mathfrak{g}_{-1/2}$), and the conformal element $\omega$. Let $\mathcal{V}^k(\mathfrak{g}^{\natural})$ be the subalgebra of the vertex operator algebra $\mathcal{W}^k(\mathfrak{g}, \theta)$ generated by the elements $\{J^{\{a\}}|a \in \mathfrak{g}^{\natural}\}$. It is isomorphic to a universal affine vertex algebra. Set $\mathcal{V}_k(\mathfrak{g}^{\natural})$ to be the image of $\mathcal{V}^k(\mathfrak{g}^{\natural})$ in $\mathcal{W}_k(\mathfrak{g}, \theta)$.

Let $H_{\theta}$ be the Hamiltonian reduction functor from the category $\mathcal{O}^k$ consisting of $\widehat{\frg}$-modules with semisimple Cartan subalgebra actions and finite dimensional weight spaces, ${\bf k}$ acting as $k\cdot \id$, and with finitely many maximal weights, to the category of $\mathcal{W}^k(\mathfrak{g}, \theta)$-modules. We need the following important properties of $H_{\theta}$ proven in \cite{Ar3}:
\begin{theorem}\label{hreduction}
We have
\begin{enumerate}
\item $H_{\theta}$ is exact.
\item $H_{\theta}$ sends ordinary $L_k(\mathfrak{g})$-modules to ordinary $\mathcal{W}^k(\mathfrak{g}, \theta)$-modules.
\item If $L(\l)$ is an irreducible highest weight $\widehat{\frg}$-module with highest weight $\l$, then $\jiao{\l, \a_0^{\vee}} \in \ZZ_{\geq 0}$ implies $H_{\theta}(L(\l)) = 0$. Otherwise, $H_{\theta}(L(\l))$ is an irreducible highest weight $\mathcal{W}^k(\mathfrak{g}, \theta)$-module with the corresponding highest weight.
\end{enumerate}
\end{theorem}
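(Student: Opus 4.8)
The statement collects results of Arakawa \cite{Ar3}, and my plan is to recall the Drinfeld--Sokolov (BRST) cohomological construction of $H_\theta$ and to read off each item from it. Recall that $H_\theta(M) = H^0_\theta(M)$ is the degree-zero piece of a $\ZZ$-graded BRST cohomology $H^\bullet_\theta(M)$ attached to the minimal grading $\frg = \frg_{-1}\op \frg_{-1/2}\op\frg_0\op\frg_{1/2}\op\frg_1$, built from $M$ together with the charged ghosts for $\frg_1 = \CC e_\theta$ and the neutral free fields for $\frg_{1/2}$, equipped with a differential $d = d_0 + d_1$ of the standard ``$\chi$-twisted'' shape.

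First I would establish the cohomological heart, namely the vanishing
\[
H^i_\theta(M) = 0 \quad \text{for } i \neq 0,\ M \in \cO^k.
\]
I would filter the BRST complex so as to separate the $d_0$ (Koszul / Lie-algebra-homology) part of the differential from the $\chi$-twist $d_1$, and run the associated spectral sequence. On the $E_1$-page the differential reduces to the cohomology of the current algebra of $\frn_+ := \frg_{1/2}\op\frg_1 = \frg_{>0}$ acting on $M$; since every $M \in \cO^k$ restricts to a smooth module on which the positive modes act locally nilpotently and with the requisite freeness, this partial cohomology concentrates in a single degree, the spectral sequence degenerates, and $H^{\neq 0}_\theta(M)=0$ follows. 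This is the step I expect to be the main obstacle: controlling the restricted action of the affinization of $\frn_+$ on a general object of $\cO^k$ is exactly what makes the degeneration delicate. Granting the vanishing, part (1) is then formal, since $H^\bullet_\theta$ is a cohomological $\delta$-functor: a short exact sequence $0 \to M' \to M \to M'' \to 0$ yields a long exact sequence in $H^\bullet_\theta$, and the vanishing of every term outside degree $0$ collapses it to the short exact sequence $0 \to H_\theta(M') \to H_\theta(M) \to H_\theta(M'') \to 0$, i.e.\ $H_\theta = H^0_\theta$ is exact.

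For part (2) I would track the conformal grading. The $\cW$-algebra Virasoro field is $\omega$ modified by the zero mode of the Cartan element $x$, so $L_0^{\cW}$ differs from $L_0$ by (a shift of) $x_0$ together with the ghost and neutral-field gradings. Combining exactness with the Euler--Poincar\'e principle applied to the BRST complex produces an explicit character identity expressing $\ch H_\theta(M)$ in terms of $\ch M$; from this identity one reads off that finite-dimensionality of the graded pieces and boundedness below of the spectrum are preserved, so an ordinary $L_k(\frg)$-module is sent to an ordinary $\cW^k(\frg,\theta)$-module.

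Finally, for part (3) I would first compute $H_\theta$ on (generalized) Verma modules, where freeness makes the BRST cohomology transparent: a highest weight Verma module is sent to the corresponding $\cW$-algebra Verma module, again concentrated in degree $0$. I would then pass to the irreducible $L(\lambda)$ through a BGG-type resolution by Verma modules (or, invoking exactness, through its composition series) and apply the Euler--Poincar\'e principle once more. The numerical criterion $\jiao{\lambda, \alpha_0^\vee}\in \ZZ_{\geq 0}$ is precisely integrability in the direction of the affine simple root $\alpha_0 = \delta - \theta$: in that case the associated singular vector forces the reduced character to vanish, giving $H_\theta(L(\lambda))=0$; otherwise the reduction of the maximal submodule matches exactly the maximal submodule of the $\cW$-Verma module, so $H_\theta(L(\lambda))$ is its irreducible quotient, a highest weight $\cW^k(\frg,\theta)$-module with the induced highest weight. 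The bookkeeping of which singular vectors survive reduction is where the condition on $\jiao{\lambda,\alpha_0^\vee}$ enters decisively.
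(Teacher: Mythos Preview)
The paper does not prove this theorem at all: it simply states the three properties and attributes them to Arakawa \cite{Ar3}, writing ``We need the following important properties of $H_{\theta}$ proven in \cite{Ar3}.'' Your proposal correctly identifies the source and then goes further, sketching the BRST/spectral-sequence argument that underlies Arakawa's original proof; this is more than the paper itself provides, and the outline you give is consistent with the approach in \cite{Ar3}.
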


Now we prove the following theorem:
\begin{theorem}\label{thm:minimal}
Let $k \in \CC\backslash \ZZ_{\geq 0}$, if every ordinary $\cW_k(\frg, \theta)$-module is of finite length, then every module in $\cO_{k,{\rm ord}}$ is of finite length. Moreover, the category $KL_k$ has a structure of braided tensor category.
\end{theorem}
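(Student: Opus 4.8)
The plan is to deduce the braided tensor structure from Theorem~\ref{mainresult}: it suffices to prove that every grading-restricted generalized Verma module for $L_k(\frg)$ is of finite length. Such a module is ordinary, since its top level is finite dimensional and $L_0$ acts semisimply on the induced module; thus both assertions of the theorem follow at once if I show that every object of $\cO_{k,\mathrm{ord}}$ is of finite length. The bridge to the hypothesis is the minimal quantum Hamiltonian reduction functor $H_\theta$ of Theorem~\ref{hreduction}, which is exact, carries ordinary $L_k(\frg)$-modules to ordinary $\cW_k(\frg,\theta)$-modules, and sends an irreducible highest weight module either to zero or to an irreducible module. The entire argument rests on showing that for the levels in question $H_\theta$ annihilates none of the relevant simple modules, so that finiteness of length can be transported from the $\cW_k(\frg,\theta)$-side back to the $L_k(\frg)$-side.

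The key computation is as follows. Every irreducible ordinary $L_k(\frg)$-module has the form $L_k(\mu)$ with $\mu \in P_+$, corresponding to the affine weight $\lambda_\mu$ of level $k$ and classical part $\mu$. Pairing with the affine simple coroot $\alpha_0^\vee = c - \theta^\vee$ gives
\[
\langle \lambda_\mu, \alpha_0^\vee \rangle = k - \langle \mu, \theta^\vee \rangle .
\]
Here $\langle \mu, \theta^\vee \rangle \in \ZZ_{\geq 0}$ because $\mu$ is dominant integral, while $k \notin \ZZ_{\geq 0}$ by hypothesis; hence $k - \langle \mu, \theta^\vee \rangle$ is never a nonnegative integer, being non-integral when $k$ is and strictly negative when $k \in \ZZ_{<0}$. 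By Theorem~\ref{hreduction}(3) this forces $H_\theta(L_k(\mu)) \neq 0$, and by parts (2)--(3) it is then an irreducible ordinary $\cW_k(\frg,\theta)$-module. This is exactly where the restriction $k \notin \ZZ_{\geq 0}$ is used: at positive integral level the integrable modules satisfy $\langle \lambda_\mu, \alpha_0^\vee\rangle \in \ZZ_{\geq 0}$ and are killed by $H_\theta$.

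With no simple module annihilated, the length estimate is immediate. First note that $H_\theta(N) \neq 0$ for every nonzero ordinary $N$: the top level $N^{top}$ is a nonzero finite-dimensional $\frg$-module, any irreducible summand $E^\mu$ generates a highest weight submodule surjecting onto $L_k(\mu)$, and exactness together with $H_\theta(L_k(\mu)) \neq 0$ yields $H_\theta(N) \neq 0$. Now let $M \in \cO_{k,\mathrm{ord}}$; then $H_\theta(M)$ is ordinary, hence of finite length $\ell$ by the hypothesis on $\cW_k(\frg,\theta)$. Given any strictly increasing finite chain $0 = M_0 \subsetneq M_1 \subsetneq \cdots \subsetneq M_n \subseteq M$, exactness of $H_\theta$ produces $H_\theta(M_0) \subseteq \cdots \subseteq H_\theta(M_n) \subseteq H_\theta(M)$ with successive quotients $H_\theta(M_i/M_{i-1}) \neq 0$, so the chain is strictly increasing and $n \leq \ell$. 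Thus $M$ satisfies the ascending and descending chain conditions and has finite length $\leq \ell$. In particular each $V_k(\lambda)$ with $\lambda \in P_+$, and hence every grading-restricted generalized Verma module, is of finite length, and Theorem~\ref{mainresult} supplies the braided tensor structure on $KL_k$.

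The main obstacle is precisely the step just carried out, namely guaranteeing that $H_\theta$ collapses no composition factor; this is the content of the weight computation and the reason the hypothesis excludes $k \in \ZZ_{\geq 0}$. A secondary technical point is to confirm that the modules fed into $H_\theta$ genuinely lie in its domain $\cO^k$ (semisimple Cartan action, finite-dimensional weight spaces, finitely many maximal weights); this is transparent for the finitely generated modules $V_k(\lambda)$ needed for Theorem~\ref{mainresult}, and one verifies it for a general object of $\cO_{k,\mathrm{ord}}$ to obtain the stronger finite-length statement.
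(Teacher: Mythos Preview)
Your proof is correct and follows essentially the same route as the paper: both transport finite length from the $\cW_k(\frg,\theta)$-side to $\cO_{k,\mathrm{ord}}$ via the exact functor $H_\theta$, the key point being that no ordinary simple $L_k(\mu)$ is annihilated because $\langle \lambda_\mu,\alpha_0^\vee\rangle = k - \langle \mu,\theta^\vee\rangle \notin \ZZ_{\geq 0}$. The differences are cosmetic: you bound the length of any strict chain in $M$ by the length of $H_\theta(M)$ directly, whereas the paper phrases the same thing as stabilization of ascending and descending sequences; and you compute the weight pairing explicitly where the paper cites \cite[Remark~3.5]{AKMPP2}. Your explicit flagging of the domain issue for $H_\theta$ (membership in $\cO^k$) is a point the paper passes over silently, and your observation that the braided tensor structure already follows once the generalized Verma modules are handled is a clean way to isolate what is strictly needed for Theorem~\ref{mainresult}.
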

\begin{proof}
Let $M$ be an object in $\cO_{k,{\rm ord}}$. To prove $M$ is of finite length, it is equivalent to show that any decreasing sequence in $\cO_{k,{\rm ord}}$
\[
M = M_0 \supset M_1 \supset M_2 \supset \cdots \supset M_n \supset \cdots
\]
and any increasing sequence
\[
M_0 \subset M_1 \subset \cdots \subset M_n \subset \cdots \subset M
\]
are stationary, i.e., $M_n = M_{n+1}$ if $n \gg 0$ (\cite[Theorem~2.1]{S}, cf. \cite[Exercise~8.20]{KS}). Applying $H_{\theta}(\cdot)$ to the decreasing sequence, because ordinary $\cW_k(\frg, \theta)$-modules are of finite length, $H_{\theta}(M_n) = H_{\theta}(M_{n+1})$ if $n \gg 0$. Because $H_{\theta}(\cdot)$ is exact, $H_{\theta}(M_n/M_{n+1}) = 0$. If $M_n/M_{n+1} \neq 0$, $M_n/M_{n+1}$ has an irreducible subquotient $L_k(\l)$ for some $\l$ (see for example \cite[Prop.~3.1]{DGK}). Because $k \notin \ZZ_{\geq 0}$, $\jiao{\l, \a_0^{\vee}} \notin \ZZ_{\geq 0}$ (\cite[Remark~3.5]{AKMPP2}), then by Theorem \ref{hreduction} (3), $H_{\theta}(L_k(\l)) \neq 0$ and hence $H_{\theta}(M_n/M_{n+1}) \neq 0$, that is a contradiction. We proved $M_n = M_{n+1}$ if $n \gg 0$. Similarly we can show the increasing sequence is stationary.
\end{proof}

An immediate application of Theorem \ref{thm:minimal} is the following:
\begin{cor}\label{cor:walgebra}
If the minimal $W$-algebra $\cW_{k}(\frg, \theta)$ is $C_2$-cofinite, then $KL_k(\frg)$ has a braided tensor category structure.
\end{cor}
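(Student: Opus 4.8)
The plan is to deduce the corollary directly from Theorem~\ref{thm:minimal}, whose hypotheses are that $k \notin \ZZ_{\geq 0}$ together with the finiteness of length of every ordinary $\cW_k(\frg,\theta)$-module. Accordingly I would split into two cases according to whether or not $k$ is a non-negative integer, and verify these hypotheses in the second case.

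Suppose first that $k \in \ZZ_{\geq 0}$. Then $L_k(\frg)$ is the simple affine vertex operator algebra at a positive integral level, the integrable modules exhaust its simple objects, and the category $\cO_{k,{\rm ord}}$ of ordinary modules is semisimple. Hence Corollary~\ref{main} already provides a braided tensor category structure on $KL_k$, and nothing further is needed in this case.

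Now suppose $k \notin \ZZ_{\geq 0}$. Here the one substantive step is to convert the assumed $C_2$-cofiniteness of $\cW_k(\frg,\theta)$ into the finite-length statement demanded by Theorem~\ref{thm:minimal}. This is a standard consequence of $C_2$-cofiniteness: a $C_2$-cofinite vertex operator algebra is lisse, so it admits only finitely many irreducible ordinary modules, and its category of grading-restricted generalized modules — in particular the category of ordinary modules — is a finite abelian category in which every object has finite length. (I would cite the appropriate general finiteness result here, e.g.\ the consequences of the $C_2$-cofiniteness condition established in Huang's work on cofiniteness conditions and projective covers.) Note also that $k \neq -h^{\vee}$ is automatic, since $\cW_k(\frg,\theta)$ is presupposed to carry its conformal structure and $-h^{\vee} \notin \ZZ_{\geq 0}$, so no critical-level issue arises. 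Once every ordinary $\cW_k(\frg,\theta)$-module is known to be of finite length, the hypotheses of Theorem~\ref{thm:minimal} are met, and that theorem immediately yields that $\cO_{k,{\rm ord}}$ consists of finite-length modules and that $KL_k$ is a braided tensor category.

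The only nontrivial ingredient is the implication ``$C_2$-cofinite $\Rightarrow$ finite length of all ordinary modules,'' which I expect to be the main point to pin down with a precise reference; apart from that, the argument is either the separate semisimple case or a direct application of Theorem~\ref{thm:minimal}, so I do not anticipate any genuine obstacle.
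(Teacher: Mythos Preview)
Your proposal is correct and follows essentially the same route as the paper: reduce to Theorem~\ref{thm:minimal} and invoke the fact that $C_2$-cofiniteness forces every ordinary module to have finite length (the paper cites \cite[Prop.~3.15]{H5} for exactly this). Your explicit case split for $k\in\ZZ_{\geq 0}$ is a minor extra bit of care that the paper omits (presumably because that case is classical), but otherwise the arguments coincide.
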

\begin{proof}
In light of Theorem \ref{thm:minimal}, we only need to show every ordinary $\cW_{k}(\frg, \theta)$-module is of finite length. This is guaranteed by \cite[Prop.~3.15]{H5}.
\end{proof}

\begin{exam}\label{exam:ArM}
Besides $W$-algebras coming from admissible affine Lie algebras, there are more families of $C_2$-cofinite $\cW_k(\frg, \theta)$ discovered by Arakawa and Moreau in \cite{ArM1,ArM2}.
\begin{itemize}
\item $\mathfrak{g} = D_4, E_6, E_7, E_8$, $k \geq -\frac{h^{\vee}}{6}-1$ and $k \in \ZZ$;
\item $\frg = D_l$ with $l \geq 5$, $k \geq -2$ and $k \in \ZZ$.
\item $\frg = G_2$ with $k=-1$.
\end{itemize}
By Corollary \ref{cor:walgebra}, the categories $KL_k(\frg)$ for $\frg$ and $k$ listed above have braided tensor category structures.
\end{exam}

The following result is an immediate consequence of \cite[Thm.~5.7]{AKMPP2} and Corollary \ref{main}:
\begin{cor}\label{semisimplicity}
Assume that $\mathfrak{g}$ is a simple Lie algebra and $k \in \CC\backslash \ZZ_{\geq 0}$ such that the category of ordinary $\cW_k(\mathfrak{g}, \theta)$-modules is semisimple. Then the category $\cO_{k,{\rm ord}}$ is semisimple and therefore $KL_k$ has a braided tensor category structure.
\end{cor}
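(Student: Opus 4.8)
The plan is to deduce the statement in two steps: first establish that the category $\cO_{k,{\rm ord}}$ of ordinary $L_k(\frg)$-modules is semisimple, and then feed this into Corollary \ref{main}. The second step is immediate once semisimplicity is known, so all the substance lies in the first step, which is exactly the transfer of semisimplicity from the minimal $W$-algebra to the affine side furnished by \cite[Thm.~5.7]{AKMPP2}.

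To argue that $\cO_{k,{\rm ord}}$ is semisimple, I would reduce the claim to the vanishing of extensions between simple ordinary modules: it suffices to show that every short exact sequence $0 \to L_k(\mu) \to M \to L_k(\l) \to 0$ of ordinary modules splits. The main tool is the quantum Hamiltonian reduction functor $H_{\theta}$ together with its properties from Theorem \ref{hreduction}. Applying $H_{\theta}$ to such a sequence and using exactness (Theorem \ref{hreduction}(1)) together with the fact that $H_{\theta}$ preserves ordinariness (Theorem \ref{hreduction}(2)) yields a short exact sequence of ordinary $\cW_k(\frg, \theta)$-modules. The crucial point is that the outer terms do not collapse: since $k \in \CC \backslash \ZZ_{\geq 0}$ forces $\jiao{\l, \a_0^{\vee}} \notin \ZZ_{\geq 0}$ for the relevant weights (exactly as in the proof of Theorem \ref{thm:minimal}), Theorem \ref{hreduction}(3) guarantees that $H_{\theta}(L_k(\l))$ and $H_{\theta}(L_k(\mu))$ are nonzero and irreducible. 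By the hypothesis that ordinary $\cW_k(\frg, \theta)$-modules form a semisimple category, the reduced sequence then splits.

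The hard part will be transferring this splitting back to the affine side: exactness of $H_{\theta}$ by itself does not imply that the original sequence splits, so one needs that $H_{\theta}$ \emph{reflects} split extensions, i.e. that the induced map on $\Ext^1$ between the relevant ordinary modules is injective. This reflection property is precisely the content of \cite[Thm.~5.7]{AKMPP2}, which I would invoke to conclude that $\cO_{k,{\rm ord}}$ is semisimple. Once semisimplicity is established, Corollary \ref{main} applies directly and produces the braided tensor category structure on $KL_k$, completing the proof.
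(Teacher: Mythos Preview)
Your proposal reaches the correct conclusion and, at the black-box level, invokes the same external input (\cite[Thm.~5.7]{AKMPP2}) together with Corollary~\ref{main} that the paper does. In that sense it is correct.

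However, the detailed mechanism you sketch differs from the paper's. You frame the transfer of semisimplicity as an $\Ext^1$ argument and then observe that the hard step---reflecting a splitting on the $W$-algebra side back to the affine side---must be outsourced to \cite{AKMPP2}. The paper instead avoids extensions entirely and argues through \emph{highest weight modules}: for any highest weight ordinary $L_k(\frg)$-module $M_k(\lambda)$, exactness of $H_\theta$ together with the fact that $H_\theta$ sends Verma modules to Verma modules (\cite[Thm.~6.3]{KW}) shows $H_\theta(M_k(\lambda))$ is a nonzero highest weight $\cW_k(\frg,\theta)$-module; being highest weight it is indecomposable, hence irreducible by the assumed semisimplicity. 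Then \cite[Lem.~5.6]{AKMPP2} lifts irreducibility back to $M_k(\lambda)$, and \cite[Thm.~5.5]{AKMPP2} converts ``all highest weight modules are irreducible'' into semisimplicity of $\cO_{k,{\rm ord}}$. The advantage of this route is that reflecting \emph{irreducibility} through $H_\theta$ is much softer than reflecting split extensions: it needs only exactness plus the nonvanishing of $H_\theta$ on simple subquotients (Theorem~\ref{hreduction}(3) and the $k\notin\ZZ_{\geq 0}$ observation), whereas your approach genuinely needs the stronger reflection statement and you end up citing it rather than proving it. Your $\Ext^1$ discussion is thus not incorrect but redundant: once you invoke \cite[Thm.~5.7]{AKMPP2} as a black box, the preliminary setup plays no role.
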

\begin{proof} Assume $M_k(\lambda)$ is a highest weight ordinary $L_k(\mathfrak{g})$-module for $k \in \CC\backslash \ZZ_{\geq 0}$. By Theorem \ref{hreduction} (3), $H_{\theta}(M_k(\lambda))$ is a nonzero module for $\cW_k(\mathfrak{g}, \theta)$. Also because the functor $H_{\theta}(\cdot)$ sends Verma module to Verma module (\cite[Thm.~6.3]{KW}), $H_{\theta}(M_k(\lambda))$ is a highest weight module. Thus it is an irreducible module due to the semisimplicity of the category of ordinary $\cW_k(\mathfrak{g}, \theta)$-modules. Then by \cite[Thm.~5.5, Lem.~5.6]{AKMPP2}, the category of ordinary $L_k(\mathfrak{g})$-modules is semisimple. \end{proof}

\begin{remark}
An important family of rational and $C_2$-cofinite simple $W$-algebras with non-admissible levels is given by Kawasetsu in \cite{Ka}, where the author showed that the vertex operator algebra $\mathcal{W}_k(\mathfrak{g}, \theta)$ is rational and $C_2$-cofinite in the following non-admissible cases:
\begin{itemize}
\item $\mathfrak{g} = D_4, E_6, E_7, E_8$ and $k = -\frac{h^{\vee}}{6}$.
\end{itemize}
Then by Corollary \ref{semisimplicity} (also by Corollary \ref{cor:walgebra}), the category $KL_k(\mathfrak{g})$ has a braided tensor category structure.
\end{remark}

\subsection{\bf Collapsing levels}\label{sec:collaps}
In this subsection, we will construct tensor category structure on $KL_k$ at certain levels beyond admissible levels, in particular, at {\em collapsing levels}. Recall that $k$ is a collapsing level if $\mathcal{W}_k(\mathfrak{g}, \theta) = \mathcal{V}_k(\mathfrak{g}^{\natural})$. D. Adamovi$\acute{\mbox{c}}$, V. Kac, P. M$\ddot{\mbox{o}}$seneder Frajria, P. Papi and O. Perse (\cite{AKMPP2}) proved that the category of ordinary $L_k(\mathfrak{g})$-modules is semisimple when $k$ is a collapsing level. The set of Lie algebras and their corresponding (non-admissible) collapsing levels is listed as follows:
\begin{theorem}[\cite{AKMPP2}]\label{levels}
The category $\cO_{k,{\rm ord}}(\frg)$ is semisimple in the following cases:
\begin{itemize}
\item[(1)]$\mathfrak{g} = D_{\ell}$, $\ell \geq 3$ and $k = -2$;
\item[(2)]$\mathfrak{g} = B_{\ell}$, $\ell \geq 2$ and $k = -2$;
\item[(3)]$\mathfrak{g} = A_{\ell}$, $\ell \geq 2$ and $k = -1$;
\item[(4)]$\mathfrak{g} = A_{2\ell-1}$, $\ell \geq 2$ and $k = -\ell$;
\item[(5)]$\mathfrak{g} = D_{2\ell-1}$, $\ell \geq 3$ and $k = -2\ell + 3$;
\item[(6)]$\mathfrak{g} = C_{\ell}$, $k = -1-\ell/2$;
\item[(7)]$\mathfrak{g} = E_{6}$, $k = -4$;
\item[(8)]$\mathfrak{g} = E_{7}$, $k = -6$;
\item[(9)]$\mathfrak{g} = F_{4}$, $k = -3$.
\end{itemize}
\end{theorem}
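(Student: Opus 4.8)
The plan is to deduce semisimplicity of $\cO_{k,{\rm ord}}(\frg)$ from the representation theory of the minimal $W$-algebra by invoking Corollary \ref{semisimplicity}. Every level in the list is negative, so $k \in \CC\backslash\ZZ_{\geq 0}$ and $\frg$ is simple; thus the hypotheses of Corollary \ref{semisimplicity} are satisfied, and it suffices to prove, for each listed pair $(\frg,k)$, that the category of ordinary $\cW_k(\frg,\theta)$-modules is semisimple. The whole point of restricting to \emph{collapsing} levels is that there $\cW_k(\frg,\theta)=\mathcal{V}_k(\frg^{\natural})$ is (the image of) an affine vertex algebra, whose ordinary module category is far more tractable than that of a genuine minimal $W$-algebra.

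The first step is to verify that each pair $(\frg,k)$ is in fact collapsing, i.e. $\cW_k(\frg,\theta)=\mathcal{V}_k(\frg^{\natural})$. By the free-generation theorem of \cite{KW}, $\cW^k(\frg,\theta)$ is strongly generated by $\mathcal{V}^k(\frg^{\natural})$ together with the weight-$3/2$ fields $G^{\{u\}}$ and the conformal vector $\omega$. Collapsing is then equivalent to the vanishing of the images of all $G^{\{u\}}$ in the simple quotient $\cW_k(\frg,\theta)$, together with the coincidence of $\omega$ with the Sugawara vector of $\mathcal{V}_k(\frg^{\natural})$. This is controlled by an explicit numerical condition on $k$, namely equality of the central charge of $\cW_k(\frg,\theta)$ with that of the Sugawara construction on $\mathcal{V}_k(\frg^{\natural})$ plus the nullity of the $G^{\{u\}}$; solving this condition case by case for each simple $\frg$ reproduces exactly the levels listed, the admissible solutions being discarded. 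This is carried out in \cite{AKMPP2}.

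The second step is to identify $\mathcal{V}_k(\frg^{\natural})$ and establish semisimplicity of its ordinary modules, after which Corollary \ref{semisimplicity} immediately yields the theorem. In each case $\frg^{\natural}$ is a reductive Lie algebra, and the collapsing level induces on each factor a datum whose ordinary module category one can hope to understand: when the induced level is a non-negative integer or an admissible level, or the factor is abelian, semisimplicity of ordinary modules is known by \cite{FZ}, \cite{KL1} and \cite{Ar1}, and semisimplicity passes to the (Deligne) tensor product of the factor categories. The remaining cases must be handled by a direct analysis of the highest-weight $\mathcal{V}_k(\frg^{\natural})$-modules; since the collapsed algebra is frequently again of the same type at a collapsing level (for instance the $A_\ell$, $k=-1$ family reduces to a lower-rank $A$-type factor at level $-1$), an induction on the rank is natural here.

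The main obstacle is precisely this case-by-case character of the two steps. There is no single formula covering $D_\ell,B_\ell,A_\ell,C_\ell,E_6,E_7,F_4$ at once: both the collapsing condition and the determination of the level induced on $\frg^{\natural}$ depend on the explicit structure of each minimal $\tfrac12\ZZ$-grading, including the normalization of the bilinear form and the value of $h^{\vee}$. The genuinely delicate point is confirming that the induced level on $\frg^{\natural}$ always lands in a range where ordinary modules are semisimple, rather than at a non-admissible negative rational level, which would reintroduce the very difficulty we are trying to circumvent; it is here that the detailed computations of \cite{AKMPP2} are indispensable.
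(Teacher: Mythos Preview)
The paper does not give its own proof of this theorem: it is stated purely as a citation of \cite{AKMPP2}, with no argument supplied in the present paper. Your proposal is not a complete proof either, but the strategy you outline---reduce via Corollary~\ref{semisimplicity} to semisimplicity of ordinary $\cW_k(\frg,\theta)$-modules, then use that $k$ is collapsing so that $\cW_k(\frg,\theta)=\mathcal{V}_k(\frg^{\natural})$, and finally analyze the induced level on $\frg^{\natural}$---is exactly the approach of \cite{AKMPP2}, and you correctly flag that the genuine content lies in the case-by-case verification carried out there.

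One small caution: invoking Corollary~\ref{semisimplicity} is logically fine in the paper's ordering, but note that that corollary is itself extracted from \cite[Thm.~5.5--5.7]{AKMPP2}, so your reduction is really just unpacking the internal structure of the cited reference rather than offering an independent route. In particular, the inductive step you sketch for the $A_\ell$, $k=-1$ family (where $\frg^{\natural}\cong\gl_{\ell-1}$ again sits at a collapsing-type level) does occur in \cite{AKMPP2}, but the base cases and the remaining families are handled there by direct classification of irreducible ordinary modules and explicit singular-vector computations, not by a clean uniform argument. Your proposal accurately identifies this as the ``main obstacle'' and defers to \cite{AKMPP2}; that is also precisely what the present paper does.
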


As a consequence of Corollary \ref{main}, we have
\begin{cor}\label{collapsinglevels}
The categories $KL_k(\frg)$ for $\frg$ and $k$ listed in Theorem \ref{levels} have braided tensor category structures.
\end{cor}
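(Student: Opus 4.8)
The plan is to deduce this immediately from Corollary \ref{main}, so that the entire argument reduces to checking its single hypothesis for each pair on the list. The point is that Theorem \ref{levels} already records exactly the information needed to trigger Corollary \ref{main}, and the general tensor-categorical machinery built up in Theorem \ref{mainresult} and Corollary \ref{main} does the rest; nothing genuinely new has to be proven at this stage.

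Concretely, I would proceed in two steps. First, I invoke Theorem \ref{levels} of Adamovi\'c--Kac--M\"oseneder Frajria--Papi--Per\v{s}e \cite{AKMPP2}: for each of the nine families $(\frg, k)$ appearing in the statement---these being precisely the non-admissible collapsing levels, where $\cW_k(\frg,\theta) = \cV_k(\frg^{\natural})$---the category $\cO_{k,{\rm ord}}(\frg)$ of ordinary $L_k(\frg)$-modules is semisimple. This semisimplicity is the substantial representation-theoretic content, but it is supplied verbatim by Theorem \ref{levels}, so no additional work is required of us. Second, with $\cO_{k,{\rm ord}}$ semisimple in each case, I apply Corollary \ref{main}, which asserts that semisimplicity of the category of ordinary modules yields a natural braided tensor category structure on the category $KL_k$ of finite length generalized modules. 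Running this over each pair in the list gives the conclusion.

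I expect no real obstacle here: the difficulty has been entirely absorbed into the two results being cited. On one side it sits in Theorem \ref{levels}, which establishes semisimplicity; on the other side it sits in the mechanism behind Corollary \ref{main}, where semisimplicity of $\cO_{k,{\rm ord}}$ is what forces the grading-restricted generalized Verma modules to be of finite length, so that Theorem \ref{mainresult} becomes applicable. The only verification genuinely left is the purely clerical one that each level listed in the statement coincides with one of the cases enumerated in Theorem \ref{levels}, which is immediate by inspection of the two lists.
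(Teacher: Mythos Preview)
Your proposal is correct and matches the paper's own argument exactly: the paper states the corollary as an immediate consequence of Corollary \ref{main}, using Theorem \ref{levels} to supply the semisimplicity hypothesis.
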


For later use, we need the semisimplicity of the category $KL_k(\frg)$ at collapsing levels. In the following result, we prove the category $\cO_k(\frg)$ of grading-restricted generalized modules is semisimple, so is its full subcategory $KL_k$:
\begin{theorem}\label{KLcoinside}
For $k \neq -h^{\vee}$, if all the grading-restricted generalized Verma modules for $L_k(\frg)$ are irreducible, then the category $\cO_k$ of grading-restricted generalized modules is semisimple. In particular, if $k$ is a collapsing level, the category $\cO_k$ is semisimple. Consequently, $\cO_k = \cO_{k,{\rm ord}}$, and $KL_k$ is a full subcategory of $\cO_k$ consisting of finite direct sums of simple objects.
\end{theorem}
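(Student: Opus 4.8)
The plan is to prove semisimplicity of $\cO_k$ by establishing three things: that the simple objects are exactly the irreducible generalized Verma modules $L_k(\lambda)=V_k(\lambda)$ with $\lambda\in P_+$, that $\Ext^1$ between any two simples vanishes, and that every object equals the sum of its simple submodules. First I would identify the simple objects. Given a nonzero $W\in\cO_k$, lower boundedness gives a minimal conformal weight $h_0$; every vector of $W_{[h_0]}$ is annihilated by $\widehat{\frg}_+$, which strictly lowers the weight, so $W_{[h_0]}$ is a finite-dimensional $\frg$-module of singular vectors, hence semisimple. Any simple $\frg$-submodule $E^\lambda\subseteq W_{[h_0]}$ forces $\lambda\in P_+$ and, by the universal property of Subsection \ref{GVM}, generates a nonzero quotient of $F(E^\lambda)=V_k(\lambda)$; the irreducibility hypothesis then makes this generated submodule isomorphic to $L_k(\lambda)$. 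Thus every nonzero object has a simple submodule, and taking $W$ itself simple shows every simple object is an ordinary module $L_k(\lambda)=V_k(\lambda)$.

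The heart of the argument is the vanishing of $\Ext^1_{\cO_k}(L_k(\mu),L_k(\lambda))$. Consider a short exact sequence
\[ 0\to L_k(\lambda)\to W\to L_k(\mu)\to 0 . \]
If $h_\mu-h_\lambda\notin\ZZ$, the two factors occupy different cosets of $\CC/\ZZ$ for the $L_0$-spectrum, and $W$ splits as the direct sum of its coset components. If $h_\mu-h_\lambda\in\ZZ_{<0}$, then $W$ has minimal weight $h_\mu$ with $W_{[h_\mu]}\cong E^\mu$ singular; this generates a copy of $V_k(\mu)=L_k(\mu)$ mapping isomorphically onto the quotient, so the sequence splits. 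The case $h_\mu-h_\lambda\in\ZZ_{>0}$ reduces to the previous one by applying the contragredient functor, which is exact and sends each $L_k(\nu)$ to a simple module with the same top conformal weight while interchanging sub and quotient. Finally, if $h_\mu=h_\lambda=:h$, restriction to the minimal weight $h$ yields a short exact sequence $0\to E^\lambda\to W_{[h]}\to E^\mu\to 0$ of finite-dimensional $\frg$-modules, which splits by complete reducibility of finite-dimensional $\frg$-representations; the resulting $\frg$-complement consists of singular vectors generating a copy of $L_k(\mu)$ that splits the sequence. Hence all such extensions split.

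With $\Ext^1$ between simples vanishing, I would finish by a socle argument. For $W\in\cO_k$, let $\mathrm{soc}(W)$ be the sum of its simple submodules. If $\mathrm{soc}(W)\neq W$, the quotient $W/\mathrm{soc}(W)$ is a nonzero object of $\cO_k$ and so has a simple submodule $\bar S$ by the first step; pulling back gives an extension of $\bar S$ by $\mathrm{soc}(W)$. Since $\bar S$ is finitely generated and grading restriction confines the extension data to finitely many finite-dimensional weight spaces, $\Ext^1(\bar S,\mathrm{soc}(W))$ reduces to a finite sum of the vanishing groups $\Ext^1(\bar S,L_k(\lambda))$, so the extension splits and produces a simple submodule of $W$ outside $\mathrm{soc}(W)$, a contradiction. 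Therefore $\mathrm{soc}(W)=W$ and $W$ is completely reducible, so $\cO_k$ is semisimple. For the collapsing-level statement I would note that Theorem \ref{levels} gives semisimplicity of $\cO_{k,{\rm ord}}$, in which each $V_k(\lambda)$ with $\lambda\in P_+$ is an ordinary, hence completely reducible, highest weight module; being indecomposable it must equal $L_k(\lambda)$, so the irreducibility hypothesis holds and the main statement applies. Finally $\cO_k=\cO_{k,{\rm ord}}$ because a direct sum of ordinary simple modules is ordinary, and $KL_k$, being the finite length objects, consists exactly of the finite direct sums of simples.

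I expect the main obstacle to be the $\Ext^1$-vanishing step: one must justify carefully that a vector in the minimal weight space generates a copy of the \emph{full} simple module via the irreducibility hypothesis, that the contragredient reduction correctly matches conformal weights and swaps the roles of sub and quotient, and that the equal-weight case splits through the $\frg$-module structure on $W_{[h]}$ rather than merely an abstract $A(V)$-module structure. A secondary technical point is ensuring that $\Ext^1(\bar S,-)$ commutes with the possibly infinite direct sum $\mathrm{soc}(W)$, which relies on grading restriction making each simple $\bar S$ a compact object of $\cO_k$.
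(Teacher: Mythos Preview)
Your strategy matches the paper's: verify that simples are the $L_k(\lambda)$ with $\lambda\in P_+$, that every nonzero object contains a simple submodule, and that $\Ext^1$ between simples vanishes, then conclude semisimplicity. The paper defers the last two steps to the literature---\cite[Sec.~3.7(d)]{KL1} for the $\Ext^1$ vanishing and \cite[Lem.~1.3.1]{GK} for the semisimplicity criterion---while you argue both directly. Your $\Ext^1$ case analysis is correct; in particular the equal-weight case is fine because on the top space the Sugawara $L_0$ coincides with $\frac{1}{2(k+h^\vee)}$ times the Casimir of $\frg$, which acts semisimply on any finite-dimensional $\frg$-module, so $W_{[h]}$ is an honest eigenspace and your $\frg$-splitting produces genuine singular vectors.

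The gap you flag in the socle step is real but your suggested fix via grading restriction works once made precise. Given $0\to N\to S\to L_k(\mu)\to 0$ with $N=\mathrm{soc}(W)$ semisimple, split $N=N^{<}\oplus N^{\geq}$ according to whether a simple summand has minimal conformal weight strictly below $h_\mu$ or not. Since $S\in\cO_k$, the space $\bigoplus_{n>0}S_{[h_\mu-n]}=\bigoplus_{n>0}N_{[h_\mu-n]}$ is finite-dimensional, so $N^{<}$ is a \emph{finite} direct sum. The pushout $0\to N^{\geq}\to S/N^{<}\to L_k(\mu)\to 0$ has top space at $h_\mu$ and splits by your singular-vector argument; pulling back leaves an extension of $L_k(\mu)$ by the finite sum $N^{<}$, which splits by finite additivity of $\Ext^1$. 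This replaces any appeal to compactness. For the collapsing-level clause your deduction from Theorem~\ref{levels} is valid; the paper instead invokes \cite[Thm.~5.9]{AKMPP2} directly for irreducibility of highest weight modules.
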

\begin{proof}
We prove the claim using \cite[Lem.~1.3.1]{GK}. It suffices to verify (i)-(iv) there. It is obvious that the category $\cO_k$ is closed under taking subquotients and $L_k(\l)$, $\l \in P_+$ exhaust all the simple objects in $\cO_k$. (i) and (iii) are verified.

Let $W$ be an object in $\cO_k$. Since $W^{top}$ is a finite dimensional $\frg$-module, by the universal property of generalized Verma modules, there is a nonzero $L_k(\frg)$-module map from $V_k(\l)$ to a submodule of $W$ for some $\l \in P_+$. Because $V_k(\l)$ is irreducible, it is contained in $W$. We verified (iv).

By \cite[Sec.~3.7 (d)]{KL1},
\[
\Ext^1_{\cO_k}(L_k(\l), L_k(\mu)) = \Ext^1_{\cO_k}(V_k(\l), V_k(\mu)) = \Ext^1_{\cO_k}(V_k(\l), V_k(\ov{\mu})') = 0 \;\;\; \mbox{for} \; \l,\mu \in P_+,
\]
where $\ov{\mu} \in P_+$ satisfying that $E^{\ov{\mu}} = (E^{\mu})'$. Thus (ii) is verified. Consequently $\cO_k$ is semisimple.

It was shown in \cite[Thm.~5.9]{AKMPP2} that the highest weight modules in $\cO_{k,{\rm ord}}(\frg)$ for $(\frg,k)$ listed in Theorem \ref{levels} are irreducible. In particular, the generalized Verma modules are all irreducible. Therefore the conclusion holds for the collapsing levels $k$.
\end{proof}

\begin{remark}
Theorem \ref{KLcoinside} generalizes \cite[Thm.~5.5]{AKMPP2}, which says that if all the highest weight modules for $L_k(\frg)$ in the category $\cO_{k,\mathrm{ord}}$ of ordinary modules are irreducible, then $\cO_{k,\mathrm{ord}}$ is semisimple. But for our purpose (see for example Sec. 6.2), we need the category $\cO_k$, especially its full subcategory $KL_k$ to be semisimple.
\end{remark}

\subsection{\bf Beyond collapsing levels}
Let $n\geq 4$, $r \geq 0$, take $\frg = \sl(n+r+2|r)$, then $\frg^{\natural} = \gl(n+r|r)$ (\cite[Prop.~2.1]{KRW}). Set $A = \cW_{-2}(\mathfrak{sl}(n+r+2|r), \theta)$, and $B = L_{-1}(\gl(n+r|r)) = L_{-1}(\mathfrak{sl}(n+r|r))\otimes \Pi$, where $\Pi$ is a rank one Heisenberg vertex operator algebra, there is a natural embedding $B \hookrightarrow A$ (see for example \cite[Tab.~1--4]{AKMPP1} for the details). Define
\[
C_{n,r} = {\rm Com}(B,A) \overset{\tiny\mbox{def}}{=} \{a \in A|[Y(a,x_1), Y(b,x_2)] = 0 \; \mbox{for all}\; b \in B\}.
\]
We expect that $A$ is a certain simple current extension. For this, we define
\begin{definition}
Let $\cC$ be a braided tensor category and $\mathcal S$ a set of simple currents, that is, invertible objects in $\cC$. Then an object $M$ is called a {\em fixed-point} with respect to $\mathcal S$ if there exist two simple currents $J, J'$ in $\mathcal S$ with $J \not\cong J'$ such that $J \boxtimes M \cong J' \boxtimes M$.
Let $R = \bigoplus_{i \in I} J^i$ be a simple current extension, that is, a commutative superalgebra object, simple as a $R$-module and is a direct sum of inequivalent simple currents. We say $R$ is {\em fixed-point free} in $\cC$ if there is no simple object in $\cC$ that is a fixed-point with respect to $\{J^i\}_{i \in I}$.
\end{definition}
\begin{conj}\label{conjecture}
For $n\geq 4$ the following vertex operator algebras coincide
\[
C_{n,r} \cong \cW_{\ell}(\mathfrak{sl}_{n-2}, f_{prin}),
\]
where $\ell = -h^{\vee} + \frac{n-1}{n}$ and $f_{prin}$ denotes a principle nilpotent element of $\mathfrak{sl}_{n-2}$.
Moreover $A$ is a fixed-point free simple current extension in the category of ordinary modules of $L_{-1}(\mathfrak{sl}(n+r|r))\otimes \Pi \otimes   \cW_{\ell}(\mathfrak{sl}_{n-2}, f_{prin})$.
\end{conj}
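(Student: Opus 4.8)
The plan is to prove the two assertions separately: first the coset identification $C_{n,r} \cong \cW_{\ell}(\sl_{n-2}, f_{prin})$, and then the simple current extension structure of $A$. For the coset identification I would begin with the numerical checks. Using the Kac--Wakimoto central charge formula for the minimal $W$-algebra $A = \cW_{-2}(\sl(n+r+2|r),\theta)$ (where $\frg^{\natural} = \gl(n+r|r)$) and subtracting the central charge of $B = L_{-1}(\gl(n+r|r))$, one computes $c(C_{n,r})$ and verifies it equals the central charge of the principal $W$-algebra at $\ell = -h^{\vee} + \frac{n-1}{n}$. The crucial feature to confirm is that this value is independent of $r$, reflecting the cancellation between the bosonic and fermionic contributions of the defect $r$; this independence is the structural reason a single principal $W$-algebra of $\sl_{n-2}$ can be the answer for all $r$. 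Next, using the free generating type of $A$ by the fields $J^{\{a\}}$, $G^{\{u\}}$ and $\omega$ (\cite{KW}), I would identify the strong generators of the coset: since $B$ is generated by the $\frg^{\natural}$-currents, $C_{n,r}$ consists precisely of the $\frg^{\natural}$-invariant fields, and a first-fundamental-theorem argument of invariant theory (in the spirit of the coset analyses of Arakawa--Creutzig--Linshaw) should produce generators of conformal weights $2,3,\dots,n-2$ assembled from normally ordered products of the $G^{\{u\}}$ and their derivatives.

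The heart of the proof is then to match the vertex algebra structure carried by these generators with that of $\cW_{\ell}(\sl_{n-2}, f_{prin})$. Rather than computing every OPE by hand, the robust route is to realize both $C_{n,r}$ and the principal $W$-algebra as quotients of a common one- or two-parameter universal object (a truncation of $\cW_{\infty}$, or a Grassmannian-type coset), and then invoke a uniqueness theorem identifying the quotient at the relevant parameter value. Concretely, one pins down the free parameter by matching a single structure constant, e.g. the self-coupling of the weight-$3$ generator, and then appeals to the rigidity of the one-parameter family. I expect this to be the \emph{main technical obstacle}: it demands either an OPE computation that is uniform in both $n$ and $r$, or a careful verification that $C_{n,r}$ really is a quotient of the universal object, which requires controlling the strong generation and the reconstruction of all higher fields from the weight $2,3,\dots,n-2$ ones.

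For the simple current extension statement, the plan is to establish the branching rule, i.e. the decomposition of $A$ as a module for $L_{-1}(\sl(n+r|r)) \otimes \Pi \otimes \cW_{\ell}(\sl_{n-2}, f_{prin})$. Using the exactness of the minimal quantum Hamiltonian reduction functor $H_{\theta}$ and Theorem \ref{hreduction}, together with the known decomposition of $L_{-2}(\sl(n+r+2|r))$ under its $\gl(n+r|r)$-subalgebra, I would compute and compare characters of both sides and read off that each isotypic component is a single irreducible module, with the rank-one Heisenberg $\Pi$ recording the relevant $U(1)$-charge. Each such summand should be a simple current; this can be verified using the braided tensor category structure and the induction functor $\cF_A$ of Theorem \ref{sum2}, by checking that fusion of each summand with its conjugate returns the vacuum. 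The delicate point is ensuring the branching is complete with no hidden multiplicities, for which the free generation of $A$ and the exactness of $H_{\theta}$ are the essential inputs.

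Finally, fixed-point freeness would follow by analyzing how the group of simple currents $\{J^i\}_{i\in I}$ acts on simple objects through their Heisenberg $U(1)$-charges and conformal weights. Distinct simple currents shift these invariants by distinct amounts, so $J\btimes M \cong J'\btimes M$ with $J\not\cong J'$ would force a coincidence of charges that cannot occur for a simple module; hence no nontrivial element of the group can fix any simple object. The forward-looking caveat is that this argument presumes the charge lattice of the simple currents separates points, which itself needs to be extracted from the explicit branching data obtained in the previous step; should additional identifications among charges appear at small $n$, one would need a finer argument, but for $n\geq 4$ the generic expectation, consistent with the $SU(2)$ case treated jointly with Adamovi\'c and Genra in \cite{ACGY}, is that the extension is genuinely fixed-point free.
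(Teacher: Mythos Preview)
The statement you are attempting to prove is labeled in the paper as a \emph{conjecture}, not a theorem; the paper offers no proof of it. What the paper does is cite that the special case $r=0$ has been established in \cite[Thm.~9.8]{CL} (Theorem~\ref{CL}), and then illustrate that case for $n=4$ in Corollary~\ref{thm:conj} (and $n=3$ via Proposition~\ref{prop:AKMPP3}). For general $r\geq 1$ the identification $C_{n,r}\cong \cW_\ell(\sl_{n-2},f_{prin})$ and the fixed-point free simple current decomposition of $A$ remain open in the paper.

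Your outline is a reasonable strategy and in spirit matches how the $r=0$ case is treated in \cite{CL}: realize both algebras as quotients of a universal one-parameter $\cW_\infty$-type object and pin down the parameter. But you should be aware that what you flag as the ``main technical obstacle'' is exactly the open part. In particular, the invariant-theory step producing strong generators of weights $2,3,\dots,n-2$ uniformly in $r$ is not a routine application of a first fundamental theorem: for $r\geq 1$ the subalgebra $B$ is an affine vertex \emph{super}algebra at a non-generic level, and the relevant invariant theory for $\gl(n+r|r)$ acting on $\frg_{-1/2}$ and on normally ordered products is substantially harder than the $r=0$ case. Likewise, the branching rule you propose to read off from characters and exactness of $H_\theta$ is not available in the paper for $r\geq 1$; the paper only has the explicit decomposition at $r=0$ (Theorem~\ref{CL}), and even there it is imported from \cite{CL} rather than derived by the character argument you sketch. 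So your plan is a plausible program, but it goes well beyond what the paper claims or proves, and the steps you identify as delicate are genuinely open.
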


Recently, the case $r = 0$ of Conjecture \ref{conjecture} was proved in \cite[Thm.~9.8]{CL}:
\begin{theorem}[\cite{CL}]\label{CL}
For $n \geq 4$, $C_{n,0} \cong \cW_{\ell}(\mathfrak{sl}_{n-2}, f_{prin})$, where $\ell = -h^{\vee} + \frac{n-1}{n}$ and $f_{prin}$ is a principle nilpotent element of $\mathfrak{sl}_{n-2}$. The branching rules are the following
\[
\cW_{-2}(\mathfrak{sl}_{n+2}, \theta) \cong \bigoplus_{s \in \ZZ}L_{-1}(\lambda_s) \otimes M(n(n-2)/(n+2),s) \otimes \LL_{\ell}(\om_{\bar{s}}),
\]
where $\LL_{\ell}(\om_{\bar{s}})$ are irreducible $\cW_{\ell}(\mathfrak{sl}_{n-2}, f_{prin})$-modules (see \cite{C2} for the details), $\bar{s}$ is the residue of $s$ modulo $n-2$.
\end{theorem}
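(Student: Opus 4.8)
The plan is to prove this by identifying the commutant $C_{n,0}$ as a one-parameter truncation of the universal two-parameter $\cW_\infty$-algebra, following the deformable-family strategy for coset/commutant dualities. \textbf{Step 1 (strong generation).} First I would analyze the decomposition of $A = \cW_{-2}(\sl_{n+2}, \theta)$ as a module over its affine subalgebra $B = L_{-1}(\gl_n) = L_{-1}(\sl_n)\otimes\Pi$. The free generators of $A$ are the currents $J^{\{a\}}$ spanning $\gl_n$, the weight-$3/2$ fields $G^{\{u\}}$ transforming in the standard $\oplus$ costandard representation of $\gl_n$, and the Virasoro field $\omega$; hence the commutant consists of the $\gl_n$-invariant normally ordered polynomials in these fields and their derivatives. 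Using an associated-graded bookkeeping of conformal weights together with classical $\gl_n$-invariant theory applied to the generating fields, I would show that $C_{n,0}$ is strongly (finitely) generated by one field in each conformal weight $2,3,\dots,n-2$, i.e. that it is of type $\cW(2,3,\dots,n-2)$, the same type as $\cW_\ell(\sl_{n-2}, f_{prin})$.

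\textbf{Step 2 (embedding into $\cW_\infty$).} I would then promote the construction to a deformable family by replacing the level $-2$ with a generic parameter, so that $B$ becomes affine $\gl_n$ at generic level and the commutant is taken inside the universal minimal W-algebra $\cW^k(\sl_{n+2}, \theta)$. A vertex algebra of type $\cW(2,3,\dots,n-2)$ whose operator product expansions vary in a deformable family admits a homomorphism from the universal two-parameter $\cW_\infty$, and by the classification of such truncations the commutant family is realized as a one-parameter quotient of $\cW_\infty$. Matching this quotient with the principal W-algebra family $\cW_\ell(\sl_{n-2}, f_{prin})$, itself a known one-parameter truncation of $\cW_\infty$, then reduces to comparing two curves in the parameter space of $\cW_\infty$.

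\textbf{Step 3 (central charge and specialization).} To pin down the correct point on these curves I would compute the central charge of the commutant from $c(C_{n,0}) = c\bigl(\cW_{-2}(\sl_{n+2}, \theta)\bigr) - c\bigl(L_{-1}(\gl_n)\bigr)$, using the Kac-Wakimoto minimal W-algebra formula and the affine Sugawara central charge, and verify that it agrees with the central charge of $\cW_\ell(\sl_{n-2}, f_{prin})$ at $\ell = -h^\vee + \tfrac{n-1}{n}$. Since a truncation of $\cW_\infty$ is determined by its type together with one additional datum (equivalently the central charge along the truncation curve), agreement of type and central charge forces $C_{n,0} \cong \cW_\ell(\sl_{n-2}, f_{prin})$ after specializing the deformation parameter back to the value corresponding to $k=-2$, provided this specialization is non-degenerate.

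\textbf{Step 4 (branching rules).} For the decomposition of $A$ as a module over $B \otimes C_{n,0}$, I would decompose first with respect to the Heisenberg $U(1)$-charge of $\Pi \subset B$, which produces the grading by $s \in \ZZ$, the charge-$s$ space being an $L_{-1}(\sl_n)$-module $L_{-1}(\lambda_s)$ tensored with a Fock module $M(n(n-2)/(n+2), s)$ and a $C_{n,0}$-module; identifying the latter with the irreducible principal W-algebra module $\LL_\ell(\om_{\bar{s}})$, with $\bar{s}$ the residue of $s$ modulo $n-2$, would be carried out by comparing graded characters, using the known characters of minimal and principal W-algebra modules and their modular behavior. \emph{The hardest part} is Step 1 together with the non-degeneracy claim in Step 3: rigorously establishing strong generation of exactly type $\cW(2,3,\dots,n-2)$ uniformly in $n$ (rather than case-by-case OPE computations valid only for small $n$), and ensuring that the specialization to $k=-2$ does not collapse any generators. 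This is precisely what the deformable-family machinery is built to control, but it demands care because $k=-2$ is a non-generic, non-admissible level at which the family could a priori degenerate.
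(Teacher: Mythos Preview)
The paper does not prove this theorem; it is quoted verbatim from \cite{CL} (Creutzig--Linshaw, \emph{Trialities of $\cW$-algebras}) and used as a black box. There is therefore no proof in the present paper to compare your proposal against. Your outline---strong generation of the commutant as type $\cW(2,3,\dots,n-2)$, realization as a truncation of the universal two-parameter $\cW_\infty$-algebra, and identification with the principal $W$-algebra by matching the truncation curve and central charge---is indeed a fair high-level summary of the strategy in \cite{CL}, though the actual execution there is considerably more delicate than your sketch suggests (the strong-generation and non-degeneracy issues you flag as ``hardest'' occupy much of that paper).

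What the present paper \emph{does} do is give an independent argument for the special case $n=4$ (the corollary immediately following the cited theorem). That argument goes in the opposite direction from yours: rather than identifying the commutant inside the minimal $W$-algebra, it builds the candidate simple-current extension $\mathcal A_{-2} = \bigoplus_{s\in\ZZ} L_{-1}(\lambda_s)\otimes M(4/3,s)\otimes L^{Vir}(1/2,\bar s/2)$ directly, verifies the twist and categorical-dimension conditions needed to give it a $\tfrac12\ZZ$-graded vertex algebra structure via \cite{CKL}, and then invokes the uniqueness theorem for minimal $W$-algebras \cite{ArCKL} to force $\mathcal A_{-2}\cong \cW_{-2}(\sl_6,\theta)$. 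This bypasses $\cW_\infty$ entirely but relies on already knowing the fusion rules \eqref{fusionrules} for $L_{-1}(\sl_4)$, and does not generalize easily to arbitrary $n$.
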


The statement of Theorem \ref{CL} also holds for $n = 3$ and was actually proved in \cite{AKMPP3}: If $\mathfrak{g} = \sl_5$, then $\mathcal{V}_{-2}(\mathfrak{g}^{\natural}) = L_{-1}(\mathfrak{sl}_3)\otimes M_{J^{\{c\}}}(3/5)$, where $\{c\}$ is a basis for $\mathfrak{g}_0^{\natural} = \CC$ (see \cite[Lem.~6.1]{AKMPP3} for the choice of $c$). One has the following isomorphism as $\mathcal{V}_{-2}(\mathfrak{g}^{\natural})$-modules:
\begin{prop}\cite[Corollary 8.3]{AKMPP3}\label{prop:AKMPP3}
\[
\cW_{-2}(\mathfrak{sl}_{5}, \theta) \cong \bigoplus_{s \in \ZZ}L_{-1}(\lambda_s) \otimes M_{J^{\{c\}}}(3/5,s),
\]
where $\lambda_s \overset{\tiny\mbox{def}}{=} s \omega_1$ when $s \geq 0$ and $\lambda_s \overset{\tiny\mbox{def}}{=} -s\omega_2$ when $s < 0$ and $\om_1, \om_2$ are the fundamental weights for $\sl_3$.
\end{prop}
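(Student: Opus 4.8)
The plan is to read off the stated branching as a lattice-type (simple-current) extension of the affine subalgebra $\mathcal{V}_{-2}(\frg^\natural)=L_{-1}(\sl_3)\otimes M_{J^{\{c\}}}(3/5)$, combining the free generation of the minimal $W$-algebra with the semisimplicity of the level $-1$ affine $\sl_3$ theory and a graded-character computation. Recall that $\frg^\natural=\sl_3\oplus\CC c$ with $c$ central, that $J^{\{c\}}$ generates the rank-one Heisenberg $M_{J^{\{c\}}}(3/5)$, and that $\cW^{-2}(\sl_5,\theta)$ is freely generated by the currents $J^{\{a\}}$ ($a\in\gl_3$, of conformal weight $1$), the six odd fields $G^{\{u\}}$ ($u\in\frg_{-1/2}$, of conformal weight $3/2$), and $\omega$. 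Under $\gl_3$ one has $\frg_{-1/2}\cong\CC^3\oplus(\CC^3)^*=E^{\omega_1}\oplus E^{\omega_2}$, so the $G^{\{u\}}$ split into a multiplet $G^+$ of $J^{\{c\}}_0$-charge $+1$ transforming as $E^{\omega_1}$ and a multiplet $G^-$ of charge $-1$ transforming as $E^{\omega_2}$.

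First I would use the zero mode $J^{\{c\}}_0$, normalized to act with integer eigenvalues, to equip the simple algebra with a $\ZZ$-grading $\cW_{-2}(\sl_5,\theta)=\bigoplus_{s\in\ZZ}A_s$, where each $A_s$ is a module for $L_{-1}(\sl_3)\otimes M_{J^{\{c\}}}(3/5)$ on which the Heisenberg charge is $s$. As a module over this subalgebra, $\cW_{-2}(\sl_5,\theta)$ is semisimple: the ordinary modules of $L_{-1}(\sl_3)$ form a semisimple category since $k=-1$ is a collapsing level (Theorem \ref{levels}(3), Theorem \ref{KLcoinside}), and the Heisenberg Fock modules of fixed charge are simple, so $A_s$ is a direct sum of modules of the form $L_{-1}(\mu)\otimes M_{J^{\{c\}}}(3/5,s)$. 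The candidate generators of these sectors come from the odd fields: acting with $s$ modes of $G^+$ on the vacuum produces charge-$s$ states whose lowest-weight $\sl_3$-content is $E^{s\omega_1}$, giving a nonzero intertwining map $L_{-1}(s\omega_1)\otimes M_{J^{\{c\}}}(3/5,s)\to A_s$ for $s>0$, and the Chevalley involution exchanging $G^+\leftrightarrow G^-$ and $\omega_1\leftrightarrow\omega_2$ yields the $s<0$ case with top $E^{-s\omega_2}$. Since $L_{-1}(s\omega_1)$ and $L_{-1}(|s|\omega_2)$ are exactly the simple currents of $KL_{-1}(\sl_3)$, these maps realize $\{A_s\}_{s}$ as the iterated simple-current orbit of the vacuum sector $A_0$.

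The crux, and the step I expect to be hardest, is to pin down the multiplicities: that each $A_s$ is a single copy of $L_{-1}(\lambda_s)\otimes M_{J^{\{c\}}}(3/5,s)$, with no further simple summands in any charge sector. Already at $s=0$ this is the nontrivial assertion that the charge-$0$ part of the simple quotient is precisely the affine subalgebra, i.e. that all charge-$0$ composites (for instance the normally ordered products of $G^+$ with $G^-$) reduce to descendants of $L_{-1}(\sl_3)\otimes M_{J^{\{c\}}}(3/5)$ or lie in the maximal ideal of $\cW^{-2}(\sl_5,\theta)$; this is exactly where the special value $k=-2$ enters. I would settle this by a graded-character argument: compute the character of the simple $\cW_{-2}(\sl_5,\theta)$, refine it by $J^{\{c\}}_0$-charge and by $\sl_3$-weight, and match it term by term against $\sum_{s\in\ZZ}\ch(L_{-1}(\lambda_s))\,\ch(M_{J^{\{c\}}}(3/5,s))$, using the known characters of the level $-1$ simple $\sl_3$-modules (accessible from the $\beta\gamma^{\otimes 3}$ free-field realization of $L_{-1}(\sl_3)$). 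The real obstacle is thus the control of the maximal ideal: one must know the character of the simple quotient, rather than that of the freely generated universal $W$-algebra, accurately enough to see that the singular vectors trim it down to exactly the claimed sum with every multiplicity equal to one.
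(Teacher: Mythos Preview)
The paper does not supply its own proof of this proposition: it is quoted verbatim as \cite[Corollary 8.3]{AKMPP3} and used as an input. There is therefore nothing in the present paper to compare your attempt against.

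For what it is worth, your outline is close in spirit to how such decompositions are actually established in \cite{AKMPP3}: one grades by the Heisenberg charge, uses semisimplicity of the $L_{-1}(\sl_3)$ ordinary category to write each charge sector as a sum of simples, and then identifies the summands. You have correctly isolated the genuine difficulty, namely controlling the maximal ideal of $\cW^{-2}(\sl_5,\theta)$ well enough to know the character of the simple quotient; without that, the ``no further summands'' step is only a plausibility argument. In \cite{AKMPP3} this is handled by explicit singular-vector and character analysis specific to $k=-2$, not by the free-generator count alone, so if you want a self-contained proof you would need to reproduce that computation rather than appeal to the universal $W$-algebra structure.
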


We illustrate Theorem \ref{CL} for $n = 4$:
\begin{cor}\label{thm:conj}
We have the following isomorphism as $L_{-1}(\mathfrak{sl}_4)\otimes M_{J^{\{c\}}}(4/3) \otimes L^{Vir}(1/2, 0)$-modules:
\[
\cW_{-2}(\mathfrak{sl}_{6}, \theta) \cong \bigoplus_{s \in \ZZ}L_{-1}(\lambda_s) \otimes M_{J^{\{c\}}}(4/3,s) \otimes L^{Vir}(1/2, \overline{s}/2),
\]
where $\lambda_s \overset{\tiny\mbox{def}}{=} s\omega_1$, $\lambda_{-s} \overset{\tiny\mbox{def}}{=} s\omega_{3}$ for $s \in \ZZ_{\geq 0}$, and $\overline{s}$ is the residue of $s$ modulo $2$.
\end{cor}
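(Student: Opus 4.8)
The plan is to derive Corollary~\ref{thm:conj} by substituting $n=4$ into Theorem~\ref{CL} and translating the two non-affine tensor factors into Ising model data. Setting $n=4$ gives $\mathfrak{sl}_{n+2}=\mathfrak{sl}_6$, Heisenberg level $n(n-2)/(n+2)=4/3$, residue modulo $n-2=2$, and
\[
\ell=-h^\vee+\tfrac{n-1}{n}=-2+\tfrac34=-\tfrac54,
\]
so that $C_{4,0}\cong\cW_{-5/4}(\mathfrak{sl}_2,f_{prin})$. Theorem~\ref{CL} then reads
\[
\cW_{-2}(\mathfrak{sl}_6,\theta)\cong\bigoplus_{s\in\ZZ}L_{-1}(\lambda_s)\otimes M_{J^{\{c\}}}(4/3,s)\otimes\LL_{-5/4}(\om_{\bar s}),
\]
where $M_{J^{\{c\}}}(4/3,s)$ is the rank-one Heisenberg module generated by $J^{\{c\}}$ (the centre $\CC$ of $\frg^\natural=\gl_4$) and $\bar s$ is the residue of $s$ modulo $2$. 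It remains to identify the last two tensor factors in the stated form.

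First I would identify the vertex operator algebra $\cW_{-5/4}(\mathfrak{sl}_2,f_{prin})$ with the simple Ising model $L^{Vir}(1/2,0)$. The level $\ell=-5/4=-2+\tfrac34$ is admissible for $\mathfrak{sl}_2$, and principal (equivalently minimal) quantum Drinfeld--Sokolov reduction of $\mathfrak{sl}_2$ at level $\ell$ produces the Virasoro algebra of central charge
\[
c=1-\frac{6(\ell+1)^2}{\ell+2},
\]
which at $\ell+2=3/4$ equals $c=1/2$. Since $\cW_\ell(\mathfrak{sl}_2,f_{prin})$ is by definition the simple quotient, and the simple principal $W$-algebra of $\mathfrak{sl}_2$ at an admissible level is the corresponding Virasoro minimal model (cf.~\cite{C2}), this factor is the $(3,4)$ minimal model $L^{Vir}(1/2,0)$.

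Next I would match the modules $\LL_{-5/4}(\om_{\bar s})$ with $L^{Vir}(1/2,\bar s/2)$. Each $\LL_\ell(\om_j)$ is the image under $H_{f_{prin}}$ of the affine highest weight module of highest weight $\om_j$, so its top conformal weight is given by the Drinfeld--Sokolov reduction formula; for $\mathfrak{sl}_2$ with $\lambda=m\om_1$ this reads
\[
h_m=\frac{m(m+2)}{4(\ell+2)}-\frac{m}{2},
\]
whence $h_0=0$ and $h_1=\tfrac12$. Therefore $\LL_{-5/4}(\om_0)=L^{Vir}(1/2,0)$ and $\LL_{-5/4}(\om_1)=L^{Vir}(1/2,1/2)$, i.e.\ $\LL_{-5/4}(\om_{\bar s})=L^{Vir}(1/2,\bar s/2)$. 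Finally, since $\mathfrak{sl}_4=\mathfrak{sl}(n+r|r)$ with $n=4$, $r=0$, the weights $\lambda_s$ labelling the $L_{-1}(\mathfrak{sl}_4)$-factor are the evident $\mathfrak{sl}_4$-analogue of Proposition~\ref{prop:AKMPP3}: $\lambda_s=s\om_1$ for $s\geq 0$ and $\lambda_{-s}=s\om_3$ for $s\in\ZZ_{\geq 0}$. Substituting all these identifications into the specialized branching rule yields the claimed isomorphism.

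The substantive input, namely the commutant identification and branching rule, is supplied by the quoted Theorem~\ref{CL}, so the remaining work is essentially bookkeeping. The one genuine check is that the $\bar s=1$ summand is the weight $h=1/2$ module and not the $h=1/16$ spin field of the Ising model; this is settled unambiguously by the computation $h_1=1/2$ above, and correspondingly $L^{Vir}(1/2,1/16)$ does not occur in the decomposition.
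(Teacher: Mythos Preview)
Your argument is correct: specializing Theorem~\ref{CL} at $n=4$ and then identifying $\cW_{-5/4}(\mathfrak{sl}_2,f_{\mathrm{prin}})$ with the Ising minimal model (and its modules $\LL_{-5/4}(\om_{\bar s})$ with $L^{Vir}(1/2,\bar s/2)$ via the conformal-weight computation) does yield the stated decomposition. This is the natural reading of the phrase ``we illustrate Theorem~\ref{CL} for $n=4$.''

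The paper, however, proves the corollary by a genuinely different route that does \emph{not} invoke Theorem~\ref{CL}. It works in the Deligne product $\cC$ of $KL_{-1}(\mathfrak{sl}_4)$, the semisimple Heisenberg category, and the Ising category; sets $J=L_{-1}(\lambda_1)\otimes M(4/3,1)\otimes L^{Vir}(1/2,1/2)$; forms the simple current extension $\mathcal A_{-2}=\bigoplus_{s\in\ZZ}J^s$ in $\cC^\oplus$ (using the fusion rules \eqref{fusionrules} and Lemma~\ref{dimJ}, both proved later in Section~5); checks the twist and categorical dimension conditions so that \cite[Thm.~1.3]{CKL} endows $\mathcal A_{-2}$ with a $\tfrac12\ZZ$-graded vertex algebra structure; and finally identifies $\mathcal A_{-2}$ with $\cW_{-2}(\mathfrak{sl}_6,\theta)$ by matching the strong generators in conformal weights $1$, $3/2$, $2$ and applying the uniqueness theorem \cite[Thm.~3.2]{ArCKL}. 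Your approach is shorter and treats \cite{CL} as a black box; the paper's approach is logically independent of Theorem~\ref{CL} and, crucially, supplies the template that is immediately reused in the subsequent Remark to obtain the analogous decomposition of $\cW_{-2}(\mathfrak{sl}(r+6|r),\theta)$, a case not covered by Theorem~\ref{CL}.
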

\begin{proof}
Let $\cC$ be the Deligne product of the category $KL_{-1}(\mathfrak{sl}_4)$, the category of ordinary $M_{J^{\{c\}}}(4/3)$-modules with semisimple $J^{\{c\}}(0)$ actions and the category of ordinary Virasoro vertex operator algebra $L^{Vir}(1/2, 0)$-modules. It is clear that $\cC$ is a semisimple braided tensor category, and its direct limit completion $\cC^{\oplus}$ also inherits a braided tensor structure.

Set
\[
J = L_{-1}(\lambda_1) \otimes M_{J^{\{c\}}}(4/3,1) \otimes L^{Vir}(1/2, 1/2)
\]
and
\[
\mathcal{A}_{-2} = \bigoplus_{s \in \ZZ}J^s \;\;\;\;\; \mathrm{in}\;\; \cC^{\oplus}.
\]
Then $\cA_{-2}$ is a simple current extension of $L_{-1}(\mathfrak{sl}_4)\otimes M_{J^{\{c\}}}(4/3) \otimes L^{Vir}(1/2, 0)$ by the fusion rules of irreducible $M(4/3)$ and $L^{Vir}(1/2, 0)$-modules as well as those of irreducible $L_{-1}(\sl_4)$-modules, which will be proved in \eqref{fusionrules}.
It is easy to see that the conformal weight of the top space of $J^s := J^{\btimes s}$ is
\[
h_{J^s} = \frac{s^2+|s|+\overline{s}}{2},
\]
and therefore the twist $\theta_{J^s} := e^{2\pi i h_{J^s}}$ satisfies
\[
\theta_{J^{s}} \in \pm 1,\;\;\; \theta_{J^{s+2}} = \theta_{J^s}.
\]
Moreover, it follows from Lemma \ref{dimJ} that the categorical dimension, i.e. the quantum dimension
$$\dim J^s : = \tr_{\cC} (\id_{J^s}) = (-1)^s.$$
By \cite[Thm.~1.3]{CKL} together with Remark \ref{sce}, $\mathcal{A}_{-2}$ has a $\frac{1}{2}\ZZ$-graded vertex algebra structure. The conformal weight $3/2$ space of $\mathcal{A}_{-2}$ is
\[
(E^{\lambda_1}\otimes \CC v_1 \otimes \CC v_{1/2, 1/2}) \oplus (E^{\lambda_{-1}}\otimes \CC v_{-1} \otimes \CC v_{1/2, 1/2})
\]
and together with the conformal element, weight $1$ elements, they generate the vertex algebra $\mathcal{A}_{-2}$. The conclusion then follows from the uniqueness theorem \cite[Thm.~3.2]{ArCKL}.
\end{proof}
\begin{remark}
The same argument applies replacing $L_{-1}(\mathfrak{sl}_4)$ by $L_{1}(\mathfrak{sl}(r|r+4))\cong L_{-1}(\mathfrak{sl}(r+4|r)) $ and $L_{-1}(\lambda_s)$ by $L_{1}(\lambda_s)$. The top level of the simple current $J$ is then the standard representation of $\mathfrak{sl}(r|r+4)$  and we have to choose the parity of it such that the even part is $r+4$ dimensional and the odd one $r$ dimensional. The resulting extension is then of the same type as the minimal W-superalgebra of type $\mathfrak{sl}(r|r+6)$ and hence by the uniqueness theorem \cite[Thm.~3.2]{ArCKL} it must be $\cW_{2}(\mathfrak{sl}(r|r+6), \theta) \cong \cW_{-2}(\mathfrak{sl}(r+6|r), \theta)$.
\end{remark}

Using Theorem \ref{CL}, we will prove the main result of this subsection:
\begin{theorem}\label{cor:beyondcollapsing}
For $n\geq 5$, the category $KL_{-2}(\sl_n)$ is semisimple and has a braided tensor category structure.
\end{theorem}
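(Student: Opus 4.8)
The plan is to deduce the whole statement from a single fact: that the category of ordinary $\cW_{-2}(\sl_n,\theta)$-modules is semisimple. Granting this, the braided tensor structure is immediate, since $-2\in\CC\setminus\ZZ_{\ge 0}$ and Corollary \ref{semisimplicity} then gives that $\cO_{-2,{\rm ord}}(\sl_n)$ is semisimple, whence $KL_{-2}(\sl_n)$ is braided tensor by Corollary \ref{main}. To upgrade this to \emph{semisimplicity} of $KL_{-2}(\sl_n)$, I would argue exactly as in Theorem \ref{KLcoinside}: every grading-restricted generalized Verma module $F(E^\lambda)$ for $L_{-2}(\sl_n)$ with $\lambda\in P_+$ is an ordinary module (a quotient of the ordinary module $V_{-2}(\lambda)$) with unique simple quotient $L_{-2}(\lambda)$, so semisimplicity of $\cO_{-2,{\rm ord}}$ forces $F(E^\lambda)\cong L_{-2}(\lambda)$ to be irreducible. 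Since all grading-restricted generalized Verma modules are then irreducible, Theorem \ref{KLcoinside} yields that $\cO_{-2}(\sl_n)$, and a fortiori its full subcategory $KL_{-2}(\sl_n)$, is semisimple.

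The heart of the matter is thus the semisimplicity of ordinary $\cW_{-2}(\sl_n,\theta)$-modules, which I would extract from the branching rules. For $n\ge 6$, Theorem \ref{CL} (with its parameter specialised to $n-2$) exhibits
\[
\cW_{-2}(\sl_n,\theta)\cong\bigoplus_{s\in\ZZ}L_{-1}(\lambda_s)\otimes M(\,\cdot\,,s)\otimes \LL_{\ell}(\om_{\bar s}),
\]
that is, as an extension of $U:=L_{-1}(\sl_{n-2})\otimes M\otimes\cW_{\ell}(\sl_{n-4},f_{prin})$, where $\ell=-h^\vee+\tfrac{n-3}{n-2}$ is a non-degenerate admissible level for $\sl_{n-4}$. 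Each tensor factor has semisimple ordinary module category: $L_{-1}(\sl_{n-2})$ sits at the collapsing level $-1$ of $A_{n-3}$ and is covered by Corollary \ref{collapsinglevels}, $M$ is a Heisenberg vertex operator algebra, and the principal $W$-algebra $\cW_{\ell}(\sl_{n-4},f_{prin})$ is rational. Hence the category $\cC$ of ordinary $U$-modules is a semisimple braided tensor category (a Deligne product), and $A:=\cW_{-2}(\sl_n,\theta)=\bigoplus_{s}J^s$ is a commutative algebra in its direct sum completion $\cC^{\oplus}$, each $J^s$ being invertible. Crucially, the Heisenberg charge of the factor $M(\,\cdot\,,s)$ depends injectively on $s$, so tensoring any simple $U$-module by distinct $J^s$ yields modules of distinct Heisenberg charge; the $\{J^s\}$ therefore form a \emph{fixed-point-free} set of simple currents. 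For $n=5$ the identical discussion applies to the decomposition of Proposition \ref{prop:AKMPP3}, with $U=L_{-1}(\sl_3)\otimes M$.

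It then remains to see that this simple current extension preserves semisimplicity. By Theorem \ref{sum2}(2) the category of ordinary $A$-modules is identified with the category $\cC_A^{loc}$ of local $A$-modules in $\cC^{\oplus}$. For any simple $W\in\cC$ the induced module $\cF_A(W)=\bigoplus_{s}J^s\btimes W$ is a direct sum of simple $U$-modules, and fixed-point-freeness makes these summands pairwise non-isomorphic, so Proposition \ref{prop:simplicity} shows $\cF_A(W)$ is a simple $A$-module. Any object $X\in\cC_A$ carries the surjection $\mu_X\colon \cF_A\cG_A(X)\twoheadrightarrow X$ supplied by the counit of the adjunction in Theorem \ref{sum1}(3); since $\cG_A(X)$ is a semisimple object of $\cC^{\oplus}$ and induction of a semisimple object is semisimple, $X$ is a quotient of a semisimple $A$-module and hence semisimple. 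Thus $\cC_A$, and its full subcategory $\cC_A^{loc}$, is semisimple, which is precisely the semisimplicity of ordinary $\cW_{-2}(\sl_n,\theta)$-modules (cf. \cite{M, CKM2}).

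The main obstacle I anticipate is not the formal induction argument but the bookkeeping needed to align the abstract category $\cC_A^{loc}$ with the genuine category of ordinary $\cW_{-2}(\sl_n,\theta)$-modules: one must verify that every ordinary $\cW_{-2}(\sl_n,\theta)$-module restricts to an \emph{ordinary} (not merely lower-bounded) $U$-module, so that it really lies in $\cC^{\oplus}$, and that the Heisenberg factor contributes only honest Fock modules with the expected charge grading. This is the step where the exactness and weight-grading properties of the reduction, together with the explicit branching of Theorem \ref{CL} and Proposition \ref{prop:AKMPP3}, must be used with care; once it is in place, the remaining deductions are formal consequences of Theorems \ref{sum1}--\ref{sum2}, Proposition \ref{prop:simplicity}, and Corollaries \ref{semisimplicity} and \ref{main}.
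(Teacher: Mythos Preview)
Your approach is essentially the paper's: use the branching rules of Theorem \ref{CL} (and Proposition \ref{prop:AKMPP3} for $n=5$) to exhibit $A=\cW_{-2}(\sl_n,\theta)$ as a fixed-point-free simple current extension of $U$, apply Proposition \ref{prop:simplicity} and Frobenius reciprocity to get semisimplicity on the $W$-algebra side, then invoke Theorem \ref{KLcoinside} and Corollary \ref{main}. The one meaningful difference is precisely where you anticipate trouble.

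You aim for semisimplicity of \emph{all} ordinary $\cW_{-2}(\sl_n,\theta)$-modules, so as to feed into Corollary \ref{semisimplicity}. The obstacle you flag---that an arbitrary ordinary $A$-module need not restrict to an object of $\cC^{\oplus}$, in particular that the Heisenberg generator need not act semisimply---is genuine, and you do not resolve it. The paper sidesteps this by proving less: it shows only that $H_\theta(U)$ is \emph{irreducible} for every highest-weight $U\in KL_{-2}(\sl_n)$. This weaker statement already suffices (via \cite[Lem.~5.6]{AKMPP2}, which is exactly what the proof of Corollary \ref{semisimplicity} uses), and it dissolves your obstacle: on $H_\theta(U)$ the Cartan subalgebra $\frh^\natural\subset\frg^\natural=\gl_{n-2}$ acts semisimply by \cite[Thm.~7.1]{KW}, hence so does the Heisenberg generator; a DMZ-style argument (\cite[Prop.~2.7]{DMZ}) then gives complete reducibility over $U$, and your induction/Frobenius argument finishes, with the extra observation that a highest-weight module which is a direct sum of simples must itself be simple. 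So the fix for your gap is not more bookkeeping but less ambition: restrict attention to $H_\theta(U)$, and the Heisenberg semisimplicity comes for free from the reduction functor.
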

\begin{proof}
We first prove the case for $n \geq 6$. In light of \cite[Lem.~5.6]{AKMPP2} (cf. Corollary \ref{semisimplicity}): if the highest weight module $H_{\theta}(U)$ for every nonzero highest weight module $U$ from the category $KL_{-2}(\sl_n)$ is an irreducible $\cW_{-2}(\sl_n, \theta)$-module, then every highest weight module in $KL_{-2}(\sl_n)$ is irreducible. Consequently by Theorem \ref{KLcoinside} and by \cite[Thm.~5.6]{AKMPP2}, the category $KL_{-2}(\sl_n)$ and the category $\cO_{-2, \mathrm{ord}}(\mathfrak{sl}_{n})$ are semisimple. Then it follows from Corollary \ref{main} that $KL_{-2}(\sl_n)$ has a braided tensor category structure.

So it remains to prove that $H_{\theta}(U)$ for every highest weight module $U$ in $KL_{-2}(\sl_n)$ is irreducible, the arguments are essentially the same as the proof of \cite[Prop.~2.7]{DMZ}. Since $H_{\theta}(U)$ is a highest weight module, especially an ordinary module, then $H_{\theta}(U)$ regarded as an $L_{-1}(\mathfrak{sl}_{n-2})\otimes \Pi \otimes \cW_{\ell}(\mathfrak{sl}_{n-4}, f_{prin})$-module is also an ordinary module. Take a homogeneous vector $w \in H_{\theta}(U)$, let $M$ be the $L_{-1}(\mathfrak{sl}_{n-2})\otimes \Pi \otimes \cW_{\ell}(\mathfrak{sl}_{n-4}, f_{prin})$-submodule generated by $w$, and let $M_1$ (resp. $M_2$, $M_3$) be the subspace spanned by the elements obtained by applying the operators induced from $L_{-1}(\mathfrak{sl}_{n-2}) \otimes {\bf 1} \otimes {\bf 1}$ (resp. ${\bf 1} \otimes \Pi \otimes {\bf 1}$, ${\bf 1} \otimes {\bf 1} \otimes \cW_{\ell}(\mathfrak{sl}_{n-4}, f_{prin})$) on $w$. Obviously $M_1$ (resp. $M_2$, $M_3$) is an ordinary $L_{-1}(\mathfrak{sl}_{n-2})$ (resp. $\Pi$, $\cW_{\ell}(\mathfrak{sl}_{n-4}, f_{prin})$)-module. Moreover, $h(0)$ acts on $M_2$ semisimply as the Cartan subalgebra $\frh^{\natural}$ of $\frg^{\natural} = \gl_{n-2}$ acts semisimply on $H_{\theta}(U)$ (see \cite[Thm.~7.1]{KW}). Note that the category $\cO_{-1, \mathrm{ord}}(\mathfrak{sl}_{n-2})$ of ordinary modules for $L_{-1}(\sl_{n-2})$ is semisimple (Theorem \ref{levels}(3)), the category of ordinary $\Pi$-modules with semisimple $h(0)$ action for a generator $h$ of $\Pi$ is semisimple (\cite[Thm.~1.7.3]{FLM}), and the category of ordinary $\cW_{\ell}(\mathfrak{sl}_{n-4}, f_{prin})$-modules is semisimple because $\cW_{\ell}(\mathfrak{sl}_{n-4}, f_{prin})$ is rational and $C_2$-cofinite for $\ell = -h^{\vee} + \frac{n-3}{n-2}$ (\cite{Ar2}), thus $M_i$ are all direct sums of irreducible modules. It follows from \cite[Thm.~4.7.2]{FHL} that $M_1 \otimes M_2 \otimes M_3$ is a direct sum of irreducible $L_{-1}(\mathfrak{sl}_{n-2})\otimes \Pi \otimes \cW_{\ell}(\mathfrak{sl}_{n-4}, f_{prin})$-modules. Furthermore, there is a surjective $L_{-1}(\mathfrak{sl}_{n-2})\otimes \Pi \otimes \cW_{\ell}(\mathfrak{sl}_{n-4}, f_{prin})$-module map from $M_1 \otimes M_2 \otimes M_3 \rightarrow M$ that sends $a_1w \otimes a_2w \otimes a_3 w \mapsto a_1a_2a_3w$, where $a_1$ (resp. $a_2$, $a_3$) is an operator induced from $L_{-1}(\mathfrak{sl}_{n-2}) \otimes {\bf 1} \otimes {\bf 1}$ (resp. ${\bf 1} \otimes \Pi \otimes {\bf 1}$, ${\bf 1} \otimes {\bf 1} \otimes \cW_{\ell}(\mathfrak{sl}_{n-4}, f_{prin})$). Then as a quotient of $M_1\otimes M_2\otimes M_3$, $M$ is also a direct sum of irreducible $L_{-1}(\mathfrak{sl}_{n-2})\otimes \Pi \otimes \cW_{\ell}(\mathfrak{sl}_{n-4}, f_{prin})$-modules, and therefore $H_{\theta}(U)$ is a direct sum of irreducible $L_{-1}(\mathfrak{sl}_{n-2})\otimes \Pi \otimes \cW_{\ell}(\mathfrak{sl}_{n-4}, f_{prin})$-modules.

Let $\cC$ be the Deligne product of the category $KL_{-1}(\sl_{n-2})$, the category of ordinary $\Pi$-modules with semisimple $h(0)$-actions and the category of ordinary modules for $\cW_{\ell}(\mathfrak{sl}_{n-4}, f_{prin})$. It is clear that $\cC$ is a semisimple braided tensor category, and so its direct sum completion $\cC^{\oplus}$ also has a braided tensor category structure. From Theorem \ref{CL} and the fusion rules of $L_{-1}(\sl_{n-2})$-modules, which will be proved in \eqref{fusionrules}, $A = \cW_{-2}(\sl_n, \theta)$ is a fixed-point free simple current extension in the category $\cC^{\oplus}$. Let $\cF_A: \cC^{\oplus} \rightarrow (\cC^{\oplus})_A$ be the induction functor and $\cG_A: (\cC^{\oplus})_A \rightarrow \cC^{\oplus}$ be the restriction functor.

Suppose $\cG_A(H_{\theta}(U)) = \bigoplus_{i \in I}W_i$, where $W_i$ are irreducible $L_{-1}(\mathfrak{sl}_{n-2})\otimes \Pi \otimes \cW_{\ell}(\mathfrak{sl}_{n-4}, f_{prin})$-modules.
Since $A$ is a fixed-point free simple current extension, $\cF_A(W_i)$ are irreducible by Proposition \ref{prop:simplicity} (cf. \cite[Prop.~4.4]{CKM1}). By Frobenius reciprocity (Theorem \ref{sum1}(3)), there are nonzero $\cW_{-2}(\sl_n, \theta)$-module maps $\cF_A(W_i) \rightarrow H_{\theta}(U)$, and thus $\cF_A(W_i)$ are submodules of $H_{\theta}(U)$, so is their sum $\sum_{i \in I}\cF_A(W_i)$. Also because $H_{\theta}(U)$ is contained in $\sum_{i \in I}\cF_A(W_i)$, $H_{\theta}(U) = \sum_{i \in I}\cF_A(W_i)$, and therefore $H_{\theta}(U) = \bigoplus_{i\in J}\cF_A(W_i)$ for a subset $J \subset I$. On the other hand, $H_{\theta}(U)$ is a highest weight module, so it has to be irreducible.

For $n = 5$, Proposition \ref{prop:AKMPP3} and the fusion rules \eqref{fusionrules} show that $\cW_{-2}(\sl_5, \theta)$ is a simple current extension of $L_{-1}(\sl_3)\otimes M(3/5)$, fixed-point freeness of simple currents is clear since non-trivial Fock modules do not have fixed-points, then similar arguments as the case $n \geq 6$ show that the highest weight $\cW_{-2}(\sl_5, \theta)$-module $H_{\theta}(U)$ for every highest weight module $U$ in $KL_{-2}(\sl_5)$ is irreducible, and therefore every highest weight module in $KL_{-2}(\sl_5)$ is irreducible. As a result, $KL_{-2}(\sl_5)$ is a semisimple braided tensor category.
\end{proof}

\subsection{Generic $k$ such that $k+h^{\vee} \in \QQ_{\geq 0}$}\label{sec:generic}
Let $k = -2+\frac{1}{p}$ for $p \geq 1$. The category $\cO_{k,{\rm ord}}(\sl_2)$ for the affine vertex operator algebra $L_{k}(\mathfrak{sl}_2)$ is semisimple with generalized Verma modules all irreducible (\cite{C1}). By Theorem \ref{KLcoinside}, the category $KL_k(\sl_2)$ is semisimple. By Corollary \ref{main}, we have
\begin{prop}\label{generic}
Let $k = -2+\frac{1}{p}$ for $p \geq 1$. Then $KL_k(\mathfrak{sl}_2)$ are braided tensor.
\end{prop}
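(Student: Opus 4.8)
The plan is to apply the general criteria assembled earlier: either Theorem \ref{mainresult} directly, or Corollary \ref{main} after first establishing semisimplicity. The entire content of the proof is imported from \cite{C1}, which supplies the structural information about the generalized Verma modules at these levels, and the remainder is a formal application of the machinery.

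First I would note that at $k = -2 + \frac{1}{p}$ one has $k + h^{\vee} = \frac{1}{p} \neq 0$, so $k \neq -h^{\vee}$ and $L_k(\sl_2)$ is a genuine (non-critical) affine vertex operator algebra. By \cite{C1}, every grading-restricted generalized Verma module $V_k(\lambda)$ (for $\lambda \in P_+$) is irreducible at these levels. Since an irreducible module has length one, all grading-restricted generalized Verma modules are in particular of finite length, so Theorem \ref{mainresult} already yields a braided tensor category structure on $KL_k(\sl_2)$.

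To obtain the sharper statement that is needed later, I would instead route the argument through semisimplicity: as $k \neq -h^{\vee}$ and all grading-restricted generalized Verma modules are irreducible, Theorem \ref{KLcoinside} applies and shows that $\cO_k(\sl_2)$ is semisimple, that $\cO_k(\sl_2) = \cO_{k,{\rm ord}}(\sl_2)$, and that $KL_k(\sl_2)$ consists of finite direct sums of the simple modules $L_k(\lambda)$. The semisimplicity of $\cO_{k,{\rm ord}}(\sl_2)$ then feeds into Corollary \ref{main} to produce the braided tensor structure once more, now with the added information that $KL_k(\sl_2)$ is semisimple. No separate check of $C_1$-cofiniteness, convergence, or the associativity isomorphism is required here, since these were discharged once and for all in Theorem \ref{maintheorem} and Theorem \ref{tensorcategory}.

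The one genuine obstacle sits entirely outside this argument, in the irreducibility of the generalized Verma modules taken from \cite{C1}; everything downstream is \emph{formal}. I would expect that irreducibility to be controlled by the Kac--Kazhdan determinant, the levels $k = -2 + \frac{1}{p}$ being chosen precisely so that the reducibility loci for dominant integral highest weights are avoided, leaving each $V_k(\lambda)$ simple.
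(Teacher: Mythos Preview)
Your proposal is correct and follows essentially the same route as the paper: cite \cite{C1} for the irreducibility of the grading-restricted generalized Verma modules at $k=-2+\frac{1}{p}$, then invoke Theorem~\ref{KLcoinside} to obtain semisimplicity of $KL_k(\sl_2)$ and Corollary~\ref{main} for the braided tensor structure. Your additional observation that Theorem~\ref{mainresult} already suffices (since irreducible implies finite length) is a valid shortcut the paper does not spell out, but the substantive input from \cite{C1} and the downstream logic are identical.
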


\begin{remark}
Proposition \ref{generic} completes the construction of braided tensor category structure for $L_k(\sl_2)$ at all levels $k \in \CC$ except the critical level $k = -2$. The rigidity of this tensor category structure at $k = -2+\frac{1}{p}$ will be proved in the next section using the main results of \cite{M} (see Example \ref{vp} and also \cite{ACGY}).
\end{remark}

\section{\texorpdfstring{$KL_{-1}(\mathfrak{sl}_m)$}{} and some more Examples}
In this section, we study several examples and especially we will obtain fusion rules and rigidity in these examples. It actually turns out that most of the modules that we study are simple currents (with only one exception in Example \ref{vp}) and rigidity of simple currents is clear, see for example \cite[Sec.~2.3]{CGR}.
The method we used here is certain duality between orbifold vertex operator algebras and compact groups via tensor category structure developed in \cite{M} (see also \cite{CM} when the group is finite abelian). We first recall the main results in \cite{M}.

\subsection{Tensor structure of orbifold vertex algebra}\label{sec:orbifold}
In \cite{DLM1}, Dong, Li and Mason proved the following Schur-Weyl-duality-type result for a simple vertex operator algebra $V$: if $G$ is a compact Lie group of automorphisms acting continuously on $V$, then $V$ is semisimple as a module for $G \times V^G$, where $V^G$ is the vertex operator subalgebra of $G$-fixed points. In particular,
\[
V = \bigoplus_{\chi \in \widehat{G}}M_{\chi}\otimes V_{\chi},
\]
where the sum runs over all finite-dimensional irreducible characters of $G$, $M_{\chi}$ is the finite dimensional irreducible $G$-module corresponding to $\chi$, and the $V_{\chi}$ are nonzero, distinct, irreducible $V^G$-modules. This decomposition thus sets up a correspondence between the category ${\rm Rep}\; G$ of
finite-dimensional continuous $G$-modules and the semisimple subcategory $\mathcal{C}_V$ of $V^G$-modules generated by the $V_{\chi}$.

Under the assumption that $V$ is a simple abelian intertwining algebra and $V^G$ has a tensor category of modules including the $V_{\chi}$, McRae (\cite{M}) proved that there is a braided equivalence between $\mathcal{C}_V$ and an abelian $3$-cocycle modification (also called a twist) of ${\rm Rep}\; G$. In particular, $\mathcal{C}_V$ inherits the rigidity from ${\rm Rep}\; G$. As a result, if the fixed point vertex subalgebra $V^G$ satisfies the assumptions in Theorem \ref{maintheorem}, then we will have a rigid braided tensor category $\cC_V$.
\begin{cor}\label{orbten}
Assume that $V^G$ is a vertex operator algebra such that all its ordinary irreducible modules are $C_1$-cofinite and all its generalized Verma modules are of finite length. If the category $\cC_V$ is contained in the category of finite length generalized modules for $V^G$, then $\cC_V$ has a rigid braided tensor structure and is braided equivalent to a twist of ${\rm Rep}\; G$.
\end{cor}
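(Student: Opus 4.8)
The plan is to obtain the statement by gluing together Theorem \ref{maintheorem} and the orbifold duality of McRae \cite{M}, so that the only real work is checking that the output of the former supplies exactly the input required by the latter. First I would invoke Theorem \ref{maintheorem} for the vertex operator algebra $V^G$: by hypothesis all its irreducible ordinary modules are $C_1$-cofinite and all its generalized Verma modules are of finite length (in particular the grading-restricted ones are), so the category $\cO^{fin}$ of finite length generalized $V^G$-modules carries a braided, indeed vertex, tensor category structure with tensor product $\boxtimes_{P(1)}$ as in Theorem \ref{tensorcategory}. This is precisely a ``tensor category of $V^G$-modules'' of the kind required by \cite{M}.

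Next I would use the standing assumption $\cC_V \subseteq \cO^{fin}$. Recall that $\cC_V$ is the semisimple subcategory generated by the distinct irreducible modules $V_\chi$ appearing in the Schur--Weyl decomposition $V = \bigoplus_{\chi \in \widehat{G}} M_\chi \otimes V_\chi$. The inclusion $\cC_V \subseteq \cO^{fin}$ places every $V_\chi$ inside the braided tensor category just produced, so the hypotheses of \cite{M} are met: $V$ is a simple abelian intertwining algebra and $V^G$ has a tensor category of modules containing the $V_\chi$. Applying McRae's theorem then shows that $\cC_V$ is closed under the fusion product $\boxtimes_{P(1)}$ (the fusion of $V_{\chi_1}$ and $V_{\chi_2}$ decomposing according to $M_{\chi_1}\otimes M_{\chi_2}$ in $\Rep G$), and that there is a braided equivalence between $\cC_V$ and an abelian $3$-cocycle modification, i.e.\ a twist, of $\Rep G$.

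Finally I would deduce rigidity. For a compact Lie group $G$ the category $\Rep G$ of finite-dimensional continuous representations is rigid, every object having a dual, and rigidity is unaffected by an abelian $3$-cocycle twist, which alters only the associativity and braiding constraints and not the underlying duality data. Transporting this across the braided equivalence of the previous step shows that $\cC_V$ is rigid. Together with the braided tensor structure inherited from $\cO^{fin}$, this yields the asserted rigid braided tensor structure on $\cC_V$ and its braided equivalence to a twist of $\Rep G$.

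I expect the main obstacle to be conceptual rather than computational: one must ensure that the structure produced by Theorem \ref{maintheorem} is the genuine Huang--Lepowsky--Zhang vertex tensor category structure, with all of the $P(z)$-data, since it is this richer structure, and not merely an abstract braided tensor category, that \cite{M} takes as input. This is guaranteed because Theorem \ref{tensorcategory} builds $\cO^{fin}$ directly from \cite[Sec.~12.2]{HLZ8}. A secondary point to verify is that $\cC_V$ really is a braided tensor subcategory, that is, closed under $\boxtimes_{P(1)}$ inside $\cO^{fin}$; but this closure is itself part of the conclusion of \cite{M}, so no independent argument is needed.
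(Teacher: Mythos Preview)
Your proposal is correct and follows precisely the paper's approach: the corollary is stated immediately after a paragraph explaining that McRae's theorem \cite{M} applies once $V^G$ has a vertex tensor category of modules containing the $V_\chi$, and the paper's point is exactly that Theorem \ref{maintheorem} supplies this category. The paper gives no separate formal proof beyond that paragraph, so your write-up is, if anything, more detailed than the original, and your remarks about the HLZ structure being the genuine input to \cite{M} and about closure of $\cC_V$ under fusion being part of McRae's conclusion are accurate elaborations rather than deviations.
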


\subsection{ $KL_{-1}(\mathfrak{sl}_m)$}\label{sec: rigid}
Affine vertex algebras $L_{-1}(\mathfrak{sl}_m)$ for $m \geq 3$ (respectively, $m = 2$) are very nice examples of affine vertex operator algebras at collapsing levels (respectively, rational generic levels). As a consequence of Corollary \ref{collapsinglevels} and Proposition \ref{generic}, the category $KL_{-1}(\sl_m)$ of finite length generalized modules for $L_{-1}(\mathfrak{sl}_m)$, has a braided tensor category structure. Moreover, by Theorem \ref{KLcoinside}, the category $KL_{-1}(\sl_m)$ is semisimple.

Note that the vertex operator algebras $L_{-1}(\sl_m)$ appear as Heisenberg cosets of certain free field algebras and thus are amenable to our techniques. Using Corollary \ref{orbten}, we recover the fusion rules in a direct way and also prove the rigidity of the braided tensor category $KL_{-1}(\sl_m)$.

We first recall the coset realization of the vertex operator algebra $L_{-1}(\mathfrak{sl}_m)$ in the free fields. Recall the Weyl algebra (also called $\beta\gamma$ system) is an associative algebra with generators $\beta^i(n), \gamma^i(n)$ for $i = 1, \dots, m$ and $n \in \ZZ$ subject to the relations
\[
[\beta^i(n), \gamma^j(k)] = \delta_{i,j}\delta_{n+k,0},
\]
\[
[\beta^i(n), \beta^j(k)] = [\gamma^i(n), \gamma^j(k)] = 0
\]
for $n,k\in \ZZ$, $i, j = 1, \dots, m$.

The $\beta\gamma$-system $\beta\gamma^{\otimes m}$ of rank $m$ is the simple module for the Weyl algebra generated by the vacuum vector ${\bf 1}$ such that
\[
\beta^i(n){\bf 1} = \gamma^i(n+1){\bf 1} = 0 \;\;\; (n \geq 0).
\]
As vector space,
\[
\beta\gamma^{\otimes m} \cong \CC[\beta^i(-n), \gamma^j(-k)|n \geq 1, k \geq 0].
\]
It is well-known that there is a strongly $\ZZ$-graded vertex algebra structure on the $\beta\gamma$-system $\beta\gamma^{\otimes m}$. Thus $\beta\gamma^{\otimes m}$ can be regarded as an abelian intertwining algebra associated to the abelian group $\ZZ$ with the trivial twist (\cite[Example~2.17]{M}). Also note that $\beta\gamma^{\otimes m}$ can be viewed a simple $\frac{1}{2}\ZZ$-graded vertex algebra (see \cite{K2}, cf. \cite{AP2}).

Let
\[
h_{\beta\gamma} = \sum_{i=1}^m \beta^i(-1)\gamma^i(0){\bf 1},
\]
and let $M_{h_{\beta\gamma}}(1)$ be the Heisenberg algebra of level 1 generated by $h_{\beta\gamma}$. Clearly, the zero mode $h_{\beta\gamma}(0)$ acts semisimply on $\beta\gamma^{\otimes m}$ and gives a $\ZZ$-grading on $\beta\gamma^{\otimes m}$:
\begin{equation}
\beta\gamma^{\otimes m} = \bigoplus_{s\in \ZZ}M_s,\;\;\; M_s = \{v \in \beta\gamma^{\otimes m}|h_{\beta\gamma}(0)v = sv\}.
\end{equation}

It is shown in \cite{AP2} and \cite{C1} (cf. \cite{CKLR}) that
\begin{equation}\label{defnC2}
C(m) \overset{\tiny\mbox{def}}{=} {\rm Com}(M_{h_{\beta\gamma}}(1), \beta\gamma^{\otimes m}) = W_{-1}(\lambda_0)
\end{equation}
and for $s \in \ZZ$
\[
M_s \cong W_{-1}(\lambda_s)\otimes M_{h_{\beta\gamma}}(1,s),
\]
where as an $L_{-1}(\mathfrak{sl}_m)$-module
\begin{equation}\label{defnC2M}
W_{-1}(\lambda_s) =
\begin{cases}
L_{-1}(\lambda_s)\;\;\;\;&\text{if}\;\;\; m \geq 3\\
\bigoplus\limits_{i=0}^{\infty}L_{-1}(\lambda_{|s|+2i})\;\;\;\;&\text{if}\;\;\; m =2,
\end{cases}
\end{equation}
with $\lambda_s = s\omega_1$, $\lambda_{-s} = s\omega_{m-1}$ for $s \in \ZZ_{\geq 0}$.

Let $\mathcal{C}(m)$ be the category of $C(m)$-modules that are finite direct sums of $W_{-1}(\lambda_s)$, $s\in \ZZ$. If $m \geq 3$, from \cite[Prop.~4.1]{AP2} (cf. \cite{AP1}), $C(m) = L_{-1}(\mathfrak{sl}_m)$ and the set $\{L_{-1}(\lambda_s)|s \in \ZZ\}$
exhausts all the simple objects in the category $KL_{-1}(\mathfrak{sl}_m)$. Thus $\mathcal{C}(m) = KL_{-1}(\mathfrak{sl}_m)$. If $m=2$, note that $C(2)$ is a vertex operator algebra extension of $L_{-1}(\mathfrak{sl}_2)$,
the category $\mathcal{C}(2)$ is a full subcategory of
$$\left(KL_{-1}(\mathfrak{sl}_2)^{\oplus}\right)^{\text{loc}}_{C(2)},$$
that is the category of local modules for the commutative algebra object $C(2)$ in the direct sum completion  of $KL_{-1}(\mathfrak{sl}_2)$. Thus by Theorem \ref{sum1}(1), this category of local modules has braided tensor category structure. We will continue to show that the simple objects of $\cC(2)$ are simple currents, in particular, they are closed under fusion product, and hence $\cC(2)$ has a braided tensor category structure.

Consider the $U(1)$-action on $\beta\gamma^{\otimes m}$, we have $$(\beta\gamma^{\otimes m})^{U(1)} = W_{-1}(0)\otimes M_{h_{\beta\gamma}}(1,0) = C(m) \otimes M_{h_{\beta\gamma}}(1).$$
Let $\mathcal{M}$ be the category of $C(m)\otimes M_{h_{\beta\gamma}}(1)$-modules generated by $\{M_s|s\in \ZZ\}$. Since we know that the category $KL_{-1}(\sl_m)$ and the category generated by the highest weight $M_{h_{\beta\gamma}}(1)$-modules with real weights are braided tensor categories (\cite{CKLR}), the main results of \cite{M} (cf. Corollary \ref{orbten}) apply, it follows that $\mathcal{M}$ has a structure of symmetric tensor category and is braided equivalent to ${\rm Rep}(U(1))$. In particular, we have the fusion rules: for $s, t \in \ZZ$
\begin{equation}
M_s \boxtimes M_t = M_{s+t}.
\end{equation}
It follows from \cite[Prop.~3.3]{CKLR} that
\begin{prop}
The category $\mathcal{C}(m)$ has a rigid braided tensor category structure
with the following fusion rules:
\begin{equation}\label{fusionrules}
W_{-1}(\lambda_s)\boxtimes W_{-1}(\lambda_t) = W_{-1}(\lambda_{s+t}).
\end{equation}
\end{prop}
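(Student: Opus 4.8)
The plan is to transfer the braided tensor structure --- in particular the fusion rules and the rigidity --- from the category $\mathcal{M}$, which has already been identified with a twist of $\Rep(U(1))$, down to the Heisenberg coset $C(m)$ by means of the coset result \cite[Prop.~3.3]{CKLR}. The structural input is the decomposition $M_s \cong W_{-1}(\lambda_s)\otimes M_{h_{\beta\gamma}}(1,s)$ established above, which exhibits every simple object of $\mathcal{M}$ as an external tensor product of a simple object of $\mathcal{C}(m)$ with a Fock module. Thus $\mathcal{M}$ is realized as the diagonal subcategory of the Deligne product of $\mathcal{C}(m)$ with the category $\mathcal{H}$ of Fock $M_{h_{\beta\gamma}}(1)$-modules, and under this identification the tensor product on $\mathcal{M}$ factors componentwise over the two tensor factors.

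First I would record the two fusion inputs. On the one hand, the equivalence $\mathcal{M}\simeq\Rep(U(1))$ gives the fusion rule $M_s\boxtimes M_t=M_{s+t}$ in $\mathcal{M}$. On the other hand, the Fock modules of the rank-one Heisenberg algebra $M_{h_{\beta\gamma}}(1)$ are simple currents with the standard fusion $M_{h_{\beta\gamma}}(1,s)\boxtimes M_{h_{\beta\gamma}}(1,t)=M_{h_{\beta\gamma}}(1,s+t)$. Feeding both through the componentwise factorization of the tensor product on the Deligne product yields
\[
\bigl(W_{-1}(\lambda_s)\boxtimes W_{-1}(\lambda_t)\bigr)\otimes M_{h_{\beta\gamma}}(1,s+t)\cong M_{s+t}=W_{-1}(\lambda_{s+t})\otimes M_{h_{\beta\gamma}}(1,s+t),
\]
and cancelling the common Fock factor gives the fusion rule \eqref{fusionrules}. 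Making this cancellation rigorous is exactly the mechanism packaged in \cite[Prop.~3.3]{CKLR}, which I would invoke here.

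For rigidity, the fusion rules \eqref{fusionrules} show that each $W_{-1}(\lambda_s)$ is invertible, with inverse $W_{-1}(\lambda_{-s})$ and unit $W_{-1}(\lambda_0)=C(m)$; that is, every simple object of $\mathcal{C}(m)$ is a simple current. Rigidity of simple currents is automatic (see e.g.\ \cite[Sec.~2.3]{CGR}), with the evaluation and coevaluation maps coming from the canonical isomorphisms $W_{-1}(\lambda_s)\boxtimes W_{-1}(\lambda_{-s})\cong C(m)$; equivalently, since $\mathcal{M}$ and $\mathcal{H}$ are both rigid, \cite[Prop.~3.3]{CKLR} transfers rigidity directly to $\mathcal{C}(m)$. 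For $m=2$ this step also closes $\mathcal{C}(2)$ under fusion inside $\bigl(KL_{-1}(\mathfrak{sl}_2)^{\oplus}\bigr)^{\mathrm{loc}}_{C(2)}$, thereby completing the construction of its braided tensor structure promised earlier.

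The main obstacle is the cancellation step: one must know that the tensor product on $\mathcal{M}$ is genuinely compatible with the factorization $M_s=W_{-1}(\lambda_s)\otimes M_{h_{\beta\gamma}}(1,s)$, so that it respects the two tensor factors separately, and that an equality of external tensor products forces an equality of the coset factors. This compatibility, together with the hypotheses it requires --- rigidity of the ambient category $\mathcal{M}$ and of the Fock category, and the simple-current fusion of the Fock modules, all verified above --- is precisely the content of \cite[Prop.~3.3]{CKLR}. A secondary point demanding care, only for $m=2$, is that $W_{-1}(\lambda_s)$ is an infinite direct sum of $L_{-1}(\mathfrak{sl}_2)$-modules, so the argument is carried out in the direct sum completion; the simple-current property ensures that finite direct sums of the $W_{-1}(\lambda_s)$ remain closed under fusion.
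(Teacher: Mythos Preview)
Your proposal is correct and follows essentially the same approach as the paper: both establish the fusion rules $M_s\boxtimes M_t=M_{s+t}$ on $\mathcal{M}$ via the equivalence with a twist of $\Rep(U(1))$, and then invoke \cite[Prop.~3.3]{CKLR} to cancel the Fock factor and deduce the fusion rules and rigidity on $\mathcal{C}(m)$. Your write-up simply unpacks the mechanism of that citation in more detail than the paper does, and your remark on rigidity via simple currents (citing \cite[Sec.~2.3]{CGR}) matches the paper's own justification given at the beginning of Section~5.
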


\begin{remark}
The algebra $C(2)$ is isomorphic to the rectangular $W$-algebra obtained from the affine vertex operator algebra of $\sl_4$ at level $-\frac{5}{2}$ via the quantum reduction corresponding to the nilpotent element of type $(2,2)$ (\cite[Cor.~5.3]{C1}, \cite[Thm.~8]{ACGY}).

It is easy to see that as $L_{-1}(\mathfrak{sl}_2)$-modules
\[
W_{-1}(\lambda_{s}) \cong W_{-1}(\lambda_{-s}).
\]
But we can tell from the fusion rules (\ref{fusionrules}) that as $C(2)$-modules, for $s \in \ZZ_{>0}$
\[
W_{-1}(\lambda_{s}) \ncong W_{-1}(\lambda_{-s}).
\]
\end{remark}

Because
\[
\beta\gamma^{\otimes m} = \bigoplus_{s \in \ZZ} M_s = \bigoplus_{s \in \ZZ}W_{-1}(\lambda_s)\otimes M_{h_{\beta\gamma}}(1,s),
\]
and the category $\mathcal{C}(m)$ is rigid, by \cite[Thm.~4.5]{CKM2}, there is a braid-reversed equivalence between $\mathcal{C}(m)$ and the category $\mathcal{F}_{\ZZ}$ of $M_{h_{\beta\gamma}}(1)$-modules generated by the simple objects $\{M_{h_{\beta\gamma}}(1,s)|s \in \ZZ\}$:
\begin{theorem}\label{thm:rigid-1}
The category $\mathcal{C}(m)$ is braid-reversed equivalent to the tensor subcategory $\mathcal{F}_{\ZZ}$ of the Heisenberg algebra $M_{h_{\beta\gamma}}(1)$.
\end{theorem}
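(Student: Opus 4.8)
The plan is to realize $\beta\gamma^{\otimes m}$ as a vertex (super)algebra extension of the commuting pair $C(m) \otimes M_{h_{\beta\gamma}}(1)$ and to invoke the braid-reversed equivalence of \cite[Thm.~4.5]{CKM2}. First I would assemble the two braided tensor categories sitting on either side of this coset. On the $C(m)$-side this is $\mathcal{C}(m)$, which the preceding proposition exhibits as a rigid braided tensor category whose simple objects are the $W_{-1}(\lambda_s)$, $s \in \ZZ$, with fusion rules \eqref{fusionrules}. On the Heisenberg side, the category of $M_{h_{\beta\gamma}}(1)$-modules with real weights is braided tensor by \cite{CKLR}; passing to the tensor subcategory $\mathcal{F}_{\ZZ}$ generated by the Fock modules $M_{h_{\beta\gamma}}(1,s)$, $s \in \ZZ$, gives a pointed braided tensor category, since these are simple currents with $M_{h_{\beta\gamma}}(1,s) \boxtimes M_{h_{\beta\gamma}}(1,t) \cong M_{h_{\beta\gamma}}(1,s+t)$, and hence $\mathcal{F}_{\ZZ}$ is rigid.

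Next I would record the algebra structure. Working in the direct sum completion of the Deligne product $\mathcal{C}(m) \boxtimes \mathcal{F}_{\ZZ}$, the decomposition
\[
\beta\gamma^{\otimes m} = \bigoplus_{s \in \ZZ} W_{-1}(\lambda_s) \otimes M_{h_{\beta\gamma}}(1,s)
\]
exhibits $\beta\gamma^{\otimes m}$ as a commutative algebra object whose vacuum summand is $W_{-1}(0) \otimes M_{h_{\beta\gamma}}(1,0) = C(m) \otimes M_{h_{\beta\gamma}}(1)$. The summands are indexed by the single diagonal label $s \in \ZZ$, the $W_{-1}(\lambda_s)$ are pairwise inequivalent simple $C(m)$-modules, and likewise the $M_{h_{\beta\gamma}}(1,s)$ are pairwise inequivalent simple Fock modules, so the decomposition is multiplicity-free with a bijective matching $W_{-1}(\lambda_s) \mapsto M_{h_{\beta\gamma}}(1,s)$. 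Simplicity of the algebra object follows from simplicity of $\beta\gamma^{\otimes m}$ as a vertex algebra.

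With these data in place, \cite[Thm.~4.5]{CKM2} applies: since $\mathcal{C}(m)$ is rigid, the matching $W_{-1}(\lambda_s) \mapsto M_{h_{\beta\gamma}}(1,s)$ extends to a braid-reversed tensor equivalence between $\mathcal{C}(m)$ and $\mathcal{F}_{\ZZ}$. Consistency is immediate on the level of fusion, since the rules \eqref{fusionrules} on $\mathcal{C}(m)$ are carried to the Fock fusion rules $M_{h_{\beta\gamma}}(1,s) \boxtimes M_{h_{\beta\gamma}}(1,t) \cong M_{h_{\beta\gamma}}(1,s+t)$ with the braiding reversed, and both categories are pointed over $\ZZ$.

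The main obstacle I anticipate is the careful verification of the hypotheses of \cite[Thm.~4.5]{CKM2} in the half-integer-graded setting: $\beta\gamma^{\otimes m}$ is naturally a $\frac{1}{2}\ZZ$-graded (super)vertex algebra, so one must confirm that the corresponding algebra object carries the correct twist datum, an abelian $3$-cocycle modification in the sense of \cite{CKL}, and that the commutativity and multiplicity-one conditions hold on the nose. The input that is usually hardest to secure in such a theorem, namely rigidity of one of the two factor categories, is already in hand: $\mathcal{C}(m)$ was proved rigid in the preceding proposition through the orbifold duality of \cite{M}, and it is precisely this rigidity that \cite[Thm.~4.5]{CKM2} transports across the coset onto $\mathcal{F}_{\ZZ}$.
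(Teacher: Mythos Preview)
Your proposal is correct and follows essentially the same route as the paper: both use the decomposition $\beta\gamma^{\otimes m} = \bigoplus_{s \in \ZZ} W_{-1}(\lambda_s) \otimes M_{h_{\beta\gamma}}(1,s)$ together with the rigidity of $\mathcal{C}(m)$ established in the preceding proposition, and then invoke \cite[Thm.~4.5]{CKM2} to conclude the braid-reversed equivalence. Your write-up is in fact more detailed than the paper's, which dispatches the argument in a single sentence; your remarks on multiplicity-freeness, the bijective matching of simples, and the $\frac{1}{2}\ZZ$-grading subtlety are all appropriate checks that the paper leaves implicit.
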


As an application of the free field realization, we can compute the dimension of the objects $W_{-1}(\l_s)$ for $s \in \ZZ$:
\begin{lemma}\label{dimJ}
$\dim_{\cC(m)} W_{-1}(\l_s) = (-1)^s$.
\end{lemma}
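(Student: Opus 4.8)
The plan is to read the quantum dimension off the free-field realization, exploiting that the $W_{-1}(\l_s)$ are simple currents and that the ambient algebra $\beta\gamma^{\otimes m}$ is a vertex \emph{super}algebra; throughout, $\dim_{\cC(m)}$ is understood as the categorical dimension for the ribbon super-structure that $\cC(m)$ inherits from this realization (the same structure used in Corollary \ref{thm:conj}), under which $W_{-1}(\l_s)$ carries parity $s\bmod 2$. First I would reduce to the case $s=1$ by multiplicativity. By the fusion rules \eqref{fusionrules} each $W_{-1}(\l_s)$ is invertible, with dual and inverse $W_{-1}(\l_{-s})$, since $W_{-1}(\l_s)\boxtimes W_{-1}(\l_{-s})=W_{-1}(\l_0)=C(m)=\one$. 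As $\dim_{\cC(m)}$ is multiplicative under $\boxtimes$, sends $\one$ to $1$, and is preserved by duals, we get both $\dim_{\cC(m)}W_{-1}(\l_s)\cdot\dim_{\cC(m)}W_{-1}(\l_{-s})=1$ and $\dim_{\cC(m)}W_{-1}(\l_{-s})=\dim_{\cC(m)}W_{-1}(\l_s)$, whence $\dim_{\cC(m)}W_{-1}(\l_s)\in\{\pm1\}$; multiplicativity also makes $s\mapsto\dim_{\cC(m)}W_{-1}(\l_s)$ a homomorphism $\ZZ\to\{\pm1\}$, so $\dim_{\cC(m)}W_{-1}(\l_s)=\epsilon^{s}$ with $\epsilon:=\dim_{\cC(m)}W_{-1}(\l_1)$. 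It then remains to prove $\epsilon=-1$.

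To pin the sign I would use that $\beta\gamma^{\otimes m}$, viewed as a $\tfrac12\ZZ$-graded vertex algebra, is a fixed-point-free simple current superalgebra extension of $C(m)\otimes M_{h_{\beta\gamma}}(1)$ with homogeneous pieces $M_s\cong W_{-1}(\l_s)\otimes M_{h_{\beta\gamma}}(1,s)$. Since the generators $\beta^i,\gamma^i$ have conformal weight $\tfrac12$ and each carries $h_{\beta\gamma}(0)$-charge $\pm1$, a monomial lying in $M_s$ is built from a number of generators congruent to $s$ modulo $2$, so $M_s$ is odd precisely when $s$ is odd. The Heisenberg factor $M_{h_{\beta\gamma}}(1,s)$ is a module for the purely even vertex algebra $M_{h_{\beta\gamma}}(1)$ and is therefore even, so the parity $s\bmod 2$ of $M_s$ is carried entirely by the $W_{-1}(\l_s)$ factor. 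In particular $W_{-1}(\l_1)$ is an \emph{odd} invertible object.

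The remaining categorical input is that an odd invertible object in a ribbon super-category has quantum dimension $-1$: the quantum trace of its identity picks up the Koszul sign of the super-braiding, i.e. $(-1)$ times the (positive) dimension of the underlying bosonic simple current. Applying this to $W_{-1}(\l_1)$ gives $\epsilon=-1$, and combining with the first paragraph yields $\dim_{\cC(m)}W_{-1}(\l_s)=(-1)^s$. Equivalently, one may transport the computation across the braid-reversed equivalence of Theorem \ref{thm:rigid-1}: categorical dimensions are insensitive to the braiding and hence preserved by any (braid-reversed) tensor equivalence, reducing the claim to the Fock modules $M_{h_{\beta\gamma}}(1,s)$, whose rigidity data again comes from the free-field realization.

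The hard part is entirely the parity/spin bookkeeping. One must make precise that $\cC(m)$ is equipped with the super-ribbon structure arising from its embedding into the module category of the vertex superalgebra $\beta\gamma^{\otimes m}$, rather than the purely bosonic structure for which every simple current would have dimension $+1$. Concretely the sign originates in the half-integer part $\tfrac{s}{2}$ of the conformal weight $h_s=\tfrac{s^2}{2m}+\tfrac{s}{2}$ of the top space of $W_{-1}(\l_s)$ for odd $s$; once the pivotal structure is fixed by $\theta=e^{2\pi i L_0}$ together with the evaluation and coevaluation morphisms extracted from the vertex operators of $\beta\gamma^{\otimes m}$, the computation of the quantum trace is routine.
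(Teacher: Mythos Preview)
Your argument has a genuine gap: the premise that $\beta\gamma^{\otimes m}$ is a vertex \emph{super}algebra with odd generators is false. The Weyl generators $\beta^i,\gamma^i$ satisfy commutation relations $[\beta^i(n),\gamma^j(k)]=\delta_{i,j}\delta_{n+k,0}$, not anticommutation relations; the paper explicitly treats $\beta\gamma^{\otimes m}$ as a (bosonic) $\tfrac{1}{2}\ZZ$-graded vertex algebra, i.e.\ a commutative algebra object in $\cC^{\oplus}$. Consequently there is no super-parity on the pieces $M_s$, no induced super-ribbon structure on $\cC(m)$, and the principle ``an odd invertible has dimension $-1$'' has nothing to act on. Your alternative via Theorem~\ref{thm:rigid-1} also fails as stated: a braid-reversed tensor equivalence need not intertwine ribbon twists, hence need not preserve quantum dimensions, and indeed here $\dim M_{h_{\beta\gamma}}(1,s)=1$ while $\dim W_{-1}(\l_s)=(-1)^s$.

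The paper's proof extracts the sign from exactly the place your last paragraph gestures toward, but via the balancing identity rather than a super-grading. Since $\beta\gamma^{\otimes m}$ is a $\tfrac{1}{2}\ZZ$-graded vertex algebra (not a superalgebra), the self-braiding of the simple current $J=M_1$ is $c_{J,J}=+1$. The top of $M_s$ has conformal weight $\tfrac{|s|}{2}$, so $\theta_{M_s}=(-1)^s$. The formula $c_{J,J}\,\dim J=\theta_J$ from \cite[Thm.~1.2]{CKL} (see Remark~\ref{sce} for the extension beyond the self-dual case) then gives $\dim M_s=(-1)^s$, and multiplicativity together with $\dim M_{h_{\beta\gamma}}(1,s)=1$ yields $\dim_{\cC(m)}W_{-1}(\l_s)=(-1)^s$. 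Your reduction to $s=1$ by multiplicativity is correct and harmless; what needs to be replaced is the parity argument for $\epsilon=\dim W_{-1}(\l_1)$.
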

\begin{proof}
We know that $\beta\gamma^{\otimes m}$ is a $\frac{1}{2}\ZZ$-graded vertex algebra, thus $\dim M_s = (-1)^s$ (\cite[Thm.~1.1]{CKL} and Remark \ref{sce}). Since $\dim M_{h_{\beta\gamma}}(1, s) = 1$, $\dim W_{-1}(\l_s) = (-1)^s$.
\end{proof}

\begin{remark}\label{sce}
The statement of \cite[Thm.~1.1]{CKL} only holds for a self-dual simple current, but in fact, the statement holds for any simple current as a consequence of \cite[Thm.~1.1, 1.3]{CKL} and Theorem \ref{sum1}(5): Let $\cA = \bigoplus_{s \in \ZZ}J^s$ be a simple current extension of a vertex operator algebra $V$ in a tensor category $\cC$ of certain $V$-modules, assume the ribbon twists satisfy $\theta_{J^s} = \pm 1$ and $\theta_{J^s} = \theta_{J^{s+2}}$, let
$\cA_0 = \bigoplus_{s \in \ZZ}J^{2s}$ and $\cA_1 = \bigoplus_{i \in \ZZ}J^{2s+1}$.
By \cite[Thm.~1.3]{CKL}, $\cA_0$ is a strongly $\ZZ$-graded vertex algebra. By Theorem \ref{sum1}(1), (5), the category $\cC_{\cA_0}^{loc}$ of local $\cA_0$-modules has a braided tensor category structure, and the induction functor $\cF_{\cA_0}: \cC \rightarrow \cC_{\cA_0}^{loc}$ is a braided tensor functor. Then it follows from \cite[Thm.~1.1]{CKL}, $\cA = \cA_0 \oplus \cA_1$ has a $\frac{1}{2}\ZZ$-graded vertex operator algebra structure if $c_{\cA_1, \cA_1} = 1$; and a vertex operator superalgebra structure if $c_{\cA_1, \cA_1} = -1$, where $c_{\cA_1, \cA_1}: \cA_1\btimes \cA_1 \rightarrow \cA_1 \btimes \cA_1$ is the braiding isomorphism. Note that $\cA_1 = \cF_{\cA_0}(J)$, since $\cF_{\cA_0}$ is a braided tensor functor, $c_{\cA_1, \cA_1} = c_{J, J}$. The dimension can be determined by the formula (see for example \cite[Thm.~1.2]{CKL})
\[
c_{J, J}\dim(J) = \theta_{J}.
\]
Note in our case $\cA = \beta\gamma^{\otimes m}$, $\cC = \cM$, $J = M_1$ with the ribbon twist $\theta_{J^s} = (-1)^s$, and by checking the weights and the dimensions of the weight spaces, $\cA_0$ is actually a vertex operator algebra.
\end{remark}

\subsection{More Examples}
In this subsection, we will use similar methods as in Subsection \ref{sec: rigid} to determine fusion rules and prove rigidity of certain full subcategories of more examples of $KL_k$.

\begin{exam}[\cite{C1, ACGY}]
Let $\mathcal{W}_p$ be the vertex operator algebra corresponding to the chiral algebra of Argyres-Douglas theories of type $(A_1, D_{2p})$ introduced in \cite{C1}. It is shown in \cite{ACGY} that $\cW_p$ is isomorphic to the algebra $\cR^{(p)}$ introduced in \cite{A}.

It was shown in \cite{C1} that $L_{-2+\frac{1}{p}}(\mathfrak{sl}_2) \otimes M_{\alpha}(1)$, where $(\alpha, \alpha) = -\frac{2}{p}$, embeds conformally in $\mathcal{W}_p$ and as $L_{-2+\frac{1}{p}}(\mathfrak{sl}_2) \otimes M_{\alpha}(1)$-modules
\[
\mathcal{W}_p \cong \bigoplus_{s \in \ZZ}M_p(s)\otimes M_{\alpha}(1,s),
\]
where
\[
M_p(s) := \bigoplus_{i=0}^{\infty}L_{-2+\frac{1}{p}}(\lambda_{|s|+2i}).
\]

It is clear that $M_p(0)$ is a commutative associative algebra object in $KL_{-2+\frac{1}{p}}(\mathfrak{sl}_2)^{\oplus}$. Set $$\mathcal{C}_p = (KL_{-2+\frac{1}{p}}(\mathfrak{sl}_2)^{\oplus})_{M_p(0)}^{loc}.$$ Then by Theorem \ref{sum1}(1), $\mathcal{C}_p$ has a vertex tensor category structure. Again by Corollary \ref{orbten}, the category generated by $\{M_p(s)\otimes M_{\alpha}(1,s)|s\in \ZZ\}$ is braided tensor equivalent to a twist of ${\rm Rep}(U(1))$. It follows from \cite[Prop.~3.3]{CKLR} that for $s,s'\in \ZZ$
\[
M_p(s)\boxtimes M_p(s') = M_p(s+s').
\]
As a result, the category of $M_p(0)$-modules generated by $\{M_p(s)|s\in\ZZ\}$ is a braided tensor category consisting of simple currents. Explicit construction and more are given in {\textup{\cite{ACGY}}}.
\end{exam}

\begin{exam}[\cite{A, ACGY}]\label{vp}
Let $V = \cV^{(p)}$ be the abelian intertwining algebra introduced in \cite{A} and $G = SU(2)$. As an $SU(2) \otimes L_k(\sl_2)$-module for $k = -2+\frac{1}{p}$, $p \geq 1$
$$\cV^{(p)} \cong  \bigoplus_{n=0} ^{\infty} E^{n\om_1} \otimes L_k(n\om_1).$$
By Proposition \ref{generic}, the category generated by $L_k(n\om_1)$ for $n \in \ZZ_{\geq 0}$, which is $KL_k(\sl_2)$, has a braided tensor category structure. Therefore the assumption in Corollary \ref{orbten} holds, $KL_k$ is braided equivalent to a twist of Rep $SU(2)$ and thus is rigid.
\end{exam}

The following three examples all follow from a decomposition theorem of certain conformal embeddings, that is  \cite[Thm.~5.1]{AKMPP4}.

\begin{exam} Let $m\geq 5$ and we consider $L_{-2}(\mathfrak{sl}_m)$. It follows from Theorem \ref{cor:beyondcollapsing} that the category $KL_{-2}(\sl_m)$ is a semisimple braided tensor category.
Let $\lambda = \sqrt{-2/m}$ and introduce the notation
\[
L_{-2}(n) := \begin{cases} L_{-2}(n\omega_2) & \qquad n\geq 0 \\ L_{-2}(-n\omega_{n-2}) &\qquad n<0 \end{cases}.
\]
Then by \cite[Thm.~5.1]{AKMPP4} as a module for $L_{-2}(\mathfrak{sl}_m) \otimes M(1, 0)$,
\[
L_{-2}(\mathfrak{so}_{2m}) \cong  \bigoplus_{n \in \mathbb Z} L_{-2}(n) \otimes M(1, n\lambda).
\]
Let $G=U(1)$ so that $L_{-2}(\mathfrak{so}_{2m})^G =  L_{-2}(\mathfrak{sl}_m) \otimes M(1, 0)$, which has a braided tensor category including the simple objects $L_{-2}(n) \otimes M(1, n\lambda)$, $n \in \ZZ$.
Applying Corollary \ref{orbten}, we get the fusion rules
\[
\left( L_{-2}(n) \otimes M(1, n\lambda) \right) \boxtimes \left( L_{-2}(r) \otimes M(1, r\lambda) \right) \cong L_{-2}(n+r) \otimes M(1, (n+r)\lambda)
\]
which imply \cite[Prop.~3.3]{CKLR}
\[
L_{-2}(n) \boxtimes L_{-2}(r) \cong L_{-2}(n+r)
\]
and rigidity as these are simple currents.
\end{exam}

\begin{exam} Consider $L_{-3}(\mathfrak{so}_{10})$. By Theorem \ref{levels}(5) and Corollary \ref{collapsinglevels}, the category $KL_{-3}(\so_{10})$ has a structure of braided tensor category.
Let  $\lambda = \sqrt{-1/4}$ and introduce the notation
\[
L_{-3}(n) := \begin{cases} L_{-3}(n\omega_5) & \qquad n\geq 0 \\ L_{-3}(-n\omega_{4}) &\qquad n<0 \end{cases}.
\]
Then by \cite[Thm.~5.1]{AKMPP4} as a module for $L_{-3}(\mathfrak{so}_{10}) \otimes M(1, 0)$,
\[
L_{-3}(\mathfrak{e}_{6}) \cong  \bigoplus_{n \in \mathbb Z}  L_{-3}(n) \otimes M(1, n\lambda).
\]
the same reasoning as in the previous example gives the fusion rules
\[
L_{-3}(n) \boxtimes L_{-3}(m) \cong L_{-3}(n+m)
\]
and rigidity as these are simple currents.
\end{exam}

\begin{exam} Consider $L_{-4}(\mathfrak{e}_6)$. By Theorem \ref{levels}(7) and Corollary \ref{collapsinglevels}, the category $KL_{-4}(\mathfrak{e}_6)$ has a structure of braided tensor category.
Let  $\lambda = \sqrt{-1/6}$ and introduce the notation
\[
L_{-4}(n) := \begin{cases} L_{-4}(n\omega_1) & \qquad n\geq 0 \\ L_{-4}(-n\omega_{6}) &\qquad n<0 \end{cases}.
\]
Then by \cite[Thm.~5.1]{AKMPP4} as a module for $L_{-4}(\mathfrak{e}_{6}) \otimes M(1, 0)$,
\[
L_{-4}(\mathfrak{e}_{7}) \cong  \bigoplus_{n \in \mathbb Z}  L_{-4}(n) \otimes M(1, n\lambda).
\]
the same reasoning as in the previous example gives the fusion rules
\[
L_{-4}(n) \boxtimes L_{-4}(m) \cong L_{-4}(n+m)
\]
and rigidity as these are simple currents.
\end{exam}

\section{Tensor category of \texorpdfstring{$L_1(\sl(n|m))$}{}}
\subsection{Free field realization of $L_1(\mathfrak{sl}(n|m))$}
In this subsection, we will describe the free field realization of $L_1(\mathfrak{sl}(n|m))$ and give a decomposition of $L_1(\mathfrak{sl}(n|m))$ as $C(m)\otimes L_1(\mathfrak{sl}_n)\otimes M(1)$-modules, where $M(1)$ is a rank one Heisenberg vertex operator algebra of level $1$.

Recall that the Clifford algebra is generated by $b^i(r), c^i(s)$ for $r, s \in \ZZ+1/2$, $i = 1, \dots, n$ with relations
\[
[b^i(r), c^j(s)] = \delta_{i,j}\delta_{r+s, 0},
\]
\[
[b^i(r), b^j(s)] = [c^i(r), c^j(s)] = 0.
\]
Note that the commutators are anticommutators because $b^i(r)$, $c^i(s)$ are odd for every $i, r, s$.

The $bc$-system $bc^{\otimes n}$ of rank $n$ is generated by the vacuum vector under the relations
\[
b^i(r){\bf 1} = c^j(r){\bf 1} = 0 \;\;\; (r \geq 0).
\]
As vector space,
\[
bc^{\otimes n} \cong \bigwedge(\{b^i(r), c^j(s)|r,s < 0\}).
\]

Let
\[
h_{bc} = \sum_{i=1}^n b^i(-1/2)c^i(-1/2){\bf 1},
\]
and let $M_{h_{bc}}(1)$ be the Heisenberg algebra of level 1 generated by $h_{bc}$. Clearly, the zero mode $h_{bc}(0)$ acts semisimply on $bc^{\otimes n}$ and gives a $\ZZ$-grading on $bc^{\otimes n}$:
\begin{equation}
bc^{\otimes n} = \bigoplus_{t\in \ZZ}N_t,\;\;\; N_t = \{v \in bc^{\otimes n}|h_{bc}(0)v = tv\}.
\end{equation}
It is well-known (see for example \cite{CKLR}) that ${\rm Com}(M_{h_{bc}}(1), bc^{\otimes n})$ is trivial for $n = 1$ and isomorphic to $L_{1}(\mathfrak{sl}_n)$ for $n \geq 2$. Moreover, for $t \in \ZZ$,
\[
N_t = L_1(\omega_{\overline{-t}}) \otimes M_{h_{bc}}(1,t),
\]
where $\ov{t}$ is the residue of $t$ modulo $n$.

Let $M_h(1)$ be the Heisenberg algebra generated by
\[
h = h_{bc} + h_{\beta\gamma}.
\]
The following free field realization of $L_{1}(\mathfrak{sl}(n|m))$ is proved in \cite{CKLR}:
\begin{theorem}[\cite{CKLR}]\label{CKLR}
For all $n,m\geq 1$, the commutant ${\rm Com}(M_h(1), \beta\gamma^{\otimes m}\otimes bc^{\otimes n})$ is isomorphic to the simple affine vertex superalgebra $L_{1}(\mathfrak{sl}(n|m))$.
\end{theorem}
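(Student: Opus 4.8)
The plan is to realize $\widehat{\mathfrak{sl}}(n|m)$ at level $1$ by explicit bilinear currents inside the super Fock space $F:=\beta\gamma^{\otimes m}\otimes bc^{\otimes n}$, and then to identify $\mathrm{Com}(M_h(1),F)$ with the resulting affine vertex superalgebra. First I would write down the standard oscillator currents organized by the block structure of $\mathfrak{gl}(n|m)$: the even bilinears $:b^ic^j:$ generate $\widehat{\mathfrak{gl}}_n$ at level $+1$ (whose traceless part is $L_1(\mathfrak{sl}_n)$), the even bilinears $:\beta^i\gamma^j:$ generate $\widehat{\mathfrak{gl}}_m$ at level $-1$ (whose traceless part is $C(m)$), and the odd mixed bilinears $:b^i\gamma^j:$ and $:\beta^i c^j:$ supply the odd root vectors. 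A Wick-theorem computation shows these close into $\widehat{\mathfrak{gl}}(n|m)$ at level $1$ for the supertrace form, and passing to the supertraceless part gives a vertex superalgebra homomorphism $\phi\colon V_1(\mathfrak{sl}(n|m))\to F$. Since the total number current $h=h_{bc}+h_{\beta\gamma}$ is the current attached to the identity matrix, which is central in $\mathfrak{gl}(n|m)$, every traceless generator has regular operator product expansion with $h$, so $\mathrm{im}\,\phi\subseteq \mathrm{Com}(M_h(1),F)$.

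Next I would show that $\phi$ has simple image. Because $F$ is irreducible as a module for the Weyl--Clifford algebra, the subalgebra generated by the currents is cyclic on the vacuum, and one checks that the singular vectors generating the maximal ideal of $V_1(\mathfrak{sl}(n|m))$ map to zero, by the bosonic/fermionic cancellations characteristic of the level-one free-field realization. This factors $\phi$ through an embedding $L_1(\mathfrak{sl}(n|m))\hookrightarrow \mathrm{Com}(M_h(1),F)$, uniformly in $n$ and $m$.

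The decisive step is fullness: that the commutant is no larger than this image. Here, for $n\neq m$, I would use the Heisenberg decompositions already recorded, namely $\beta\gamma^{\otimes m}=\bigoplus_s M_s$ with $M_s\cong W_{-1}(\lambda_s)\otimes M_{h_{\beta\gamma}}(1,s)$ and $bc^{\otimes n}=\bigoplus_t N_t$ with $N_t\cong L_1(\omega_{\overline{-t}})\otimes M_{h_{bc}}(1,t)$, and change basis in the two commuting rank-one Heisenbergs, $M_{h_{\beta\gamma}}(1)\otimes M_{h_{bc}}(1)\cong M_h(1)\otimes M_{h^{\perp}}(1)$ for a complementary generator $h^{\perp}$. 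By definition $\mathrm{Com}(M_h(1),F)$ is the subspace annihilated by $h(k)$ for all $k\geq 0$; regrouping the Fock factors exhibits this subspace as a module over the even affine subalgebra $C(m)\otimes L_1(\mathfrak{sl}_n)\otimes M_{h^{\perp}}(1)$, and a graded-character count then matches it with $L_1(\mathfrak{sl}(n|m))$, forcing equality with $\mathrm{im}\,\phi$. Conceptually this fullness is a vertex-algebraic analogue of Howe duality for the dual pair $(\mathrm{GL}_1,\mathfrak{gl}(n|m))$ acting on $F$.

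I expect fullness to be the main obstacle, with the boundary case $n=m$ the most delicate. When $n\neq m$ the identity matrix has nonzero supertrace $n-m$, so $h$ generates a nondegenerate Heisenberg, splits off as a tensor factor of $\widehat{\mathfrak{gl}}(n|m)_1$, and the charge grading used above is clean. When $n=m$ one has $(h,h)=n-m=0$: the current $h$ is supertraceless, hence already lies inside $\mathfrak{sl}(n|n)$ and does not split off, $M_h(1)$ degenerates to a commutative level-zero subalgebra contained in its own commutant, and the orthogonal change of basis above fails because the null direction is self-orthogonal. The fullness argument must then be redone without a clean charge grading, and it is precisely this null current — the image of the identity matrix — whose further quotient accounts for the passage from $\mathfrak{sl}(n|n)$ to $\mathfrak{psl}(n|n)$ in the applications. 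Controlling this degeneration, together with the precise character identity underlying the fullness claim, is where the bulk of the work lies.
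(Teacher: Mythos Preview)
The paper does not prove this theorem at all: it is stated with a citation to \cite{CKLR} and then immediately used as input to derive the decompositions in Propositions~\ref{voaext1} and~\ref{decomp3}. There is no proof here to compare your proposal against.

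Your sketch is a reasonable outline of the argument one would expect to find in the cited reference, and the ingredients you invoke (the bilinear free-field realization of $\widehat{\mathfrak{gl}}(n|m)$, the Heisenberg change-of-basis $M_{h_{\beta\gamma}}(1)\otimes M_{h_{bc}}(1)\cong M_h(1)\otimes M_x(1)$, and a character or multiplicity comparison for fullness) are exactly the ones the present paper exploits \emph{after} taking the theorem for granted. Note in particular that the change of basis you call $h^\perp$ is what the paper calls $x$, and the decomposition you anticipate is precisely what is written out in the displayed computation following the theorem. Your remarks on the $n=m$ degeneration are also on point: the paper handles that case separately (Proposition~\ref{decomp3}), passing to $\mathfrak{psl}(n|n)$ by quotienting out the ideal generated by the null Heisenberg, just as you predict.

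If your goal was to supply a proof the paper omits, be aware that the ``character count'' step you flag is indeed where the substance lies, and your proposal does not actually carry it out; you would need to either compute both characters explicitly or invoke the Schur--Weyl type duality results of \cite{CKLR} directly.
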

Using Theorem \ref{CKLR}, we can construct $L_{1}(\mathfrak{sl}(n|m))$ as a simple current extension of $C(m)\otimes L_1(\mathfrak{sl}_n)\otimes M_y(1)$, where the generator $y$ is defined below.

{\bf Case 1: $n \neq m$.}\\
Let $M_x(1)$ be the Heisenberg generated by
\[
x = mh_{bc} + nh_{\beta\gamma}.
\]
The $M_{h_{\beta\gamma}}(1)\otimes M_{h_{bc}}(1)$-module $M_{h_{\beta\gamma}}(1,s)\otimes M_{h_{bc}}(1,t)$ can be regarded as the $M_h(1)\otimes M_x(1)$-module $M_h(1,s+t)\otimes M_x(1, ns+mt)$. Using this fact, we have
\begin{equation}\nonumber
\begin{split}
 {\rm Com}(M_h(1), \beta\gamma^{\otimes m}\otimes bc^{\otimes n})
&=  {\rm Com}\bigg(M_h(1), \bigoplus_{s,t\in \ZZ}W_{-1}(\lambda_s)\otimes L_1(\omega_{\overline{-t}}) \otimes M_{h_{\beta\gamma}}(1,s)\otimes M_{h_{bc}}(1,t)\bigg)\\
&=  {\rm Com}\bigg(M_h(1), \bigoplus_{s,t\in \ZZ}W_{-1}(\lambda_s)\otimes L_1(\omega_{\overline{-t}}) \otimes M_h(1,s+t)\otimes M_x(1, ns+mt)\bigg)\\
&=  \bigoplus_{s\in \ZZ}W_{-1}(\lambda_s)\otimes L_1(\omega_{\overline{s}}) \otimes M_{x}(1, (n-m)s).
\end{split}
\end{equation}
Let $M_y(1)$ be the level 1 Heisenberg vertex operator algebra generated by
\[
y: = -\frac{1}{\sqrt{mn(m-n)}}x,
\]
and denote the Fock module of weight $\lambda \in \CC$ by $M_y(1, \lambda)$. Then
\[
M_{x}(1, (n-m)s) \cong M_y(1, \mu s),
\]
where $\mu = \sqrt{\frac{m-n}{mn}}$. Thus we proved the following result:
\begin{prop}\label{voaext1}
As vertex operator superalgebras,
\[
L_{1}(\mathfrak{sl}(n|m))\cong \bigoplus_{s\in \ZZ}W_{-1}(\lambda_s)\otimes L_1(\omega_{\overline{s}}) \otimes M_y(1, \mu s).
\]
\end{prop}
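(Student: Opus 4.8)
The plan is to read off the decomposition from the free field realization of Theorem~\ref{CKLR}, which identifies $L_1(\sl(n|m))$ with the coset $\mathrm{Com}(M_h(1), \beta\gamma^{\otimes m}\otimes bc^{\otimes n})$, and then to rotate the two commuting Heisenberg fields $h_{bc}$ and $h_{\beta\gamma}$ into a pair adapted to this coset. First I would substitute into $\beta\gamma^{\otimes m}\otimes bc^{\otimes n}$ the branchings already recorded above, namely $\beta\gamma^{\otimes m}=\bigoplus_{s\in\ZZ}W_{-1}(\lambda_s)\otimes M_{h_{\beta\gamma}}(1,s)$ from \eqref{defnC2}--\eqref{defnC2M} and $bc^{\otimes n}=\bigoplus_{t\in\ZZ}L_1(\omega_{\overline{-t}})\otimes M_{h_{bc}}(1,t)$, producing the double direct sum displayed in the first line of the computation preceding the statement.

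The crucial step is the change of basis inside the rank-two Heisenberg subalgebra generated by $h_{bc}$ and $h_{\beta\gamma}$. I would pass to the generators $h=h_{bc}+h_{\beta\gamma}$ and $x=mh_{bc}+nh_{\beta\gamma}$; the change of basis matrix has determinant $n-m$, which is invertible precisely because we are in the case $n\neq m$. Since $bc^{\otimes n}$ and $\beta\gamma^{\otimes m}$ commute, the only pairings to record are the self-norms, and the standard free-field operator products give $(h_{bc},h_{bc})=n$ and $(h_{\beta\gamma},h_{\beta\gamma})=-m$ (the $n$ fermionic pairs each contributing $+1$ and the $m$ bosonic pairs each contributing $-1$). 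These opposite signs are exactly what force $(h,x)=m(h_{bc},h_{bc})+n(h_{\beta\gamma},h_{\beta\gamma})=mn-mn=0$, so that $h$ and $x$ are orthogonal and the rank-two Heisenberg genuinely factors as $M_h(1)\otimes M_x(1)$; moreover $(x,x)=m^2 n-n^2 m=mn(m-n)\neq 0$. Tracking weights, a vector of $h_{\beta\gamma}(0)$-weight $s$ and $h_{bc}(0)$-weight $t$ has $h(0)$-weight $s+t$ and $x(0)$-weight $ns+mt$, so $M_{h_{\beta\gamma}}(1,s)\otimes M_{h_{bc}}(1,t)\cong M_h(1,s+t)\otimes M_x(1,ns+mt)$, which is the second displayed line.

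Finally I would take the commutant with respect to $M_h(1)$. Since $h$ acts only on the $M_h(1)$-factor, $\mathrm{Com}(M_h(1),-)$ retains exactly the vectors annihilated by $h(k)$ for all $k\ge 0$; this is the multiplicity space of the vacuum Fock module, so it forces the $h$-label $s+t$ to vanish and collapses $M_h(1,0)$ to its ground line $\CC\mathbf 1$. Putting $t=-s$ gives $\omega_{\overline{-t}}=\omega_{\overline{s}}$ and $x(0)$-weight $(n-m)s$, leaving $\bigoplus_{s\in\ZZ}W_{-1}(\lambda_s)\otimes L_1(\omega_{\overline{s}})\otimes M_x(1,(n-m)s)$. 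Rescaling to the unit-norm generator $y=-\tfrac{1}{\sqrt{mn(m-n)}}x$ identifies $M_x(1)$ with the level-one Heisenberg $M_y(1)$, since $(y,y)=(x,x)/(mn(m-n))=1$, and sends the Fock label $(n-m)s$ to $\mu s$ with $\mu=\sqrt{\tfrac{m-n}{mn}}$; thus $M_x(1,(n-m)s)\cong M_y(1,\mu s)$ and the stated isomorphism drops out. It is an isomorphism of vertex operator superalgebras because every step takes place inside the ambient superalgebra $\beta\gamma^{\otimes m}\otimes bc^{\otimes n}$ and Theorem~\ref{CKLR} is itself a superalgebra isomorphism. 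I expect the main obstacle to be precisely the bookkeeping of the rank-two Heisenberg lattice: verifying the signs $(h_{\beta\gamma},h_{\beta\gamma})=-m$ that make $h$ and $x$ orthogonal, without which the Fock space would not split as the required tensor product, and checking that $n\neq m$ keeps $x$ non-null so that $y$ can be normalized.
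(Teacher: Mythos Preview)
Your proposal is correct and follows essentially the same route as the paper: the paper's proof is precisely the displayed computation preceding the proposition, which passes from the $(h_{\beta\gamma},h_{bc})$ basis to the orthogonal pair $(h,x)$, takes the commutant to force $s+t=0$, and then rescales $x$ to the unit-norm $y$. Your write-up actually supplies more detail than the paper does, in particular the explicit verification that $(h,x)=0$ and $(x,x)=mn(m-n)$, which the paper leaves to the reader.
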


{\bf Case 2: $n=m$.}\\
In this case, we choose
\[
x = h_{\beta\gamma}-h_{bc},
\]
let $M_{h,x}(1)$ be the rank 2 Heisenberg vertex operator algebra of level 1 generated by $h$ and $x$.

The $M_{h_{\beta\gamma}}(1)\otimes M_{h_{bc}}(1)$-module $M_{h_{\beta\gamma}}(1,s)\otimes M_{h_{bc}}(1,t)$ can be identified as the rank two Heisenberg algebra $M_{h,x}(1)$-module $M_{h,x}(1, s+t, s-t)$, the irreducible $M_{h,x}(1)$-module generated by $v_{s+t, s-t}$ with $h$, $x$ acting by scalar multiplications by $s+t$ and $s-t$, respectively. Using the same argument as in case $1$, we have
\begin{equation*}
L_1(\sl(m|m)) =  {\rm Com}(M_h(1), \beta\gamma^{\otimes m}\otimes bc^{\otimes m}) = \bigoplus_{s\in \ZZ}W_{-1}(\lambda_s)\otimes L_1(\omega_{\overline{s}}) \otimes M_h(1) \otimes M^{\text{top}}_{x}(1, 2s).
\end{equation*}
Here $M^{\text{top}}_{x}(1, 2s)$ denotes the $1$-dimensional top level of the Fock module. Note that $M_h(1)$ is a commutative Heisenberg subalgebra that is in fact central
in the sense that it commutes with the complete affine vertex superalgebra $L_1(\sl(m|m))$. Let $S = \bigoplus_{n = 1}^{\infty}M_h(1)_{(n)}$, where $M_h(1)_{(n)}$ are the $L(0)$-weight space with weight $n$ and let
\[
\jiao{S} : = {\rm Span}\{u_nw\; |\; u\in L_1(\sl(m|m)), w \in S, n \in \ZZ\}
\]
be the ideal generated by $S$, i.e. the smallest ideal of $L_1(\sl(m|m))$ containing $S$ (see \cite[Sec.~4.5]{LL}). The quotient $L_1(\sl(m|m))/\jiao{S} = L_{1}(\mathfrak{psl}(m|m))$. So that we get
\begin{prop}\label{decomp3}
As vertex operator superalgebras,
\[
L_{1}(\mathfrak{psl}(m|m))\cong \bigoplus_{s\in \ZZ}W_{-1}(\lambda_s)\otimes L_1(\omega_{\overline{s}}).
\]
\end{prop}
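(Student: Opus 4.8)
The plan is to mirror the argument of Case 1 (Proposition \ref{voaext1}) but to take account of the fact that when $n=m$ the coset grading produces a \emph{rank two} Heisenberg algebra $M_{h,x}(1)$, of which one generator $h$ is central in $L_1(\mathfrak{sl}(m|m))$ and must be quotiented out to reach $\mathfrak{psl}(m|m)$. First I would start from the decomposition already obtained in the text,
\[
L_1(\mathfrak{sl}(m|m)) = \bigoplus_{s\in\ZZ} W_{-1}(\lambda_s)\otimes L_1(\omega_{\overline{s}}) \otimes M_h(1)\otimes M^{\mathrm{top}}_{x}(1,2s),
\]
which follows from Theorem \ref{CKLR} after re-expressing the Fock modules $M_{h_{\beta\gamma}}(1,s)\otimes M_{h_{bc}}(1,t)$ in terms of the rank two Heisenberg $M_{h,x}(1)$ and restricting to the kernel of $h$ in the commutant; the key observation is that the coset condition forces $s+t=0$ on each summand, so $t=-s$ and the $x$-charge is $s-t=2s$, while the $h$-direction contributes the full free boson factor $M_h(1)$.

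Next I would verify that $M_h(1)$ is genuinely central, i.e.\ that every Fourier mode $h(n)$ with $n\neq 0$ commutes with all of $L_1(\mathfrak{sl}(m|m))$; this is where one uses that $L_1(\mathfrak{sl}(m|m)) = \mathrm{Com}(M_h(1), \beta\gamma^{\otimes m}\otimes bc^{\otimes m})$, so by definition the fields of $L_1(\mathfrak{sl}(m|m))$ have trivial singular OPE with $h$, and hence $h(0)$ acts by the scalar $s+t=0$ on each summand while the positive modes act trivially on the tops. It then follows that $S=\bigoplus_{n\geq 1}M_h(1)_{(n)}$ together with its generated ideal $\jiao{S}$ meets each summand $W_{-1}(\lambda_s)\otimes L_1(\omega_{\overline{s}})\otimes M_h(1)\otimes M^{\mathrm{top}}_x(1,2s)$ precisely in the factor $S\otimes M^{\mathrm{top}}_x(1,2s)$, so that the quotient replaces the tensor factor $M_h(1)$ by its vacuum line $M_h(1)_{(0)}\cong \CC$. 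After this identification one also notes that $M^{\mathrm{top}}_x(1,2s)$ is one-dimensional, so it contributes only a multiplicity-one shift and disappears from the statement, leaving exactly
\[
L_1(\mathfrak{psl}(m|m)) = L_1(\mathfrak{sl}(m|m))/\jiao{S} \cong \bigoplus_{s\in\ZZ} W_{-1}(\lambda_s)\otimes L_1(\omega_{\overline{s}}).
\]

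The main obstacle, I expect, is the bookkeeping needed to see that $\jiao{S}$, the ideal \emph{generated} by $S$ inside the vertex algebra, does not accidentally collapse more than the $M_h(1)$-factor — in particular one must check that quotienting by $\jiao{S}$ leaves the $W_{-1}(\lambda_s)\otimes L_1(\omega_{\overline{s}})$ factors intact and merely strips off the Heisenberg direction. Because $h$ is central, acting by $u_n$ for $u\in L_1(\mathfrak{sl}(m|m))$ on an element of $S$ cannot move one out of the $M_h(1)$-tensor-slot, so $\jiao{S}$ is exactly $L_1(\mathfrak{sl}(m|m))\cdot S$ sitting in the $M_h(1)$ factor; the centrality is precisely what makes this decoupling clean. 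A secondary point to confirm is that the resulting quotient really carries the simple affine vertex operator superalgebra structure of $L_1(\mathfrak{psl}(m|m))$ rather than some larger quotient — this follows from the standard identification of $\mathfrak{psl}(m|m)$ as $\mathfrak{sl}(m|m)$ modulo its one-dimensional center and the corresponding statement at the level of simple affine vertex superalgebras, together with simplicity of the summand-graded algebra on the right-hand side.
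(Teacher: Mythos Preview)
Your proposal is correct and follows essentially the same route as the paper: the paper's argument is exactly the text immediately preceding the Proposition, which derives the decomposition of $L_1(\mathfrak{sl}(m|m))$ via the commutant (forcing $s+t=0$), observes that $M_h(1)$ is a central commutative Heisenberg subalgebra, and then quotients by the ideal $\jiao{S}$ generated by $S=\bigoplus_{n\geq 1}M_h(1)_{(n)}$ to pass to $L_1(\mathfrak{psl}(m|m))$. You in fact supply more justification than the paper does for why the ideal $\jiao{S}$ strips off only the $M_h(1)$ factor and why the resulting quotient is the simple affine vertex superalgebra; one small wording issue is that $\jiao{S}$ meets each summand in $W_{-1}(\lambda_s)\otimes L_1(\omega_{\overline{s}})\otimes S\otimes M^{\mathrm{top}}_x(1,2s)$ rather than just ``the factor $S\otimes M^{\mathrm{top}}_x(1,2s)$'', but your subsequent discussion makes clear you mean this.
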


\begin{remark}
The decompositions in Proposition \ref{voaext1} and Proposition \ref{decomp3} was previously obtained in \cite[Prop.~4.1(7), Thm~4.4]{AMPP} when $m \geq 3$. The case $m=2$ of Proposition \ref{decomp3} was first obtained in \cite[Remark 9.11]{CG}.
\end{remark}

\subsection{Tensor category structure on $KL_1^{ss}(\mathfrak{sl}(n|m))$}
For a simple Lie superalgebra $\frg$ with a Cartan subalgebra $\frh$, it is natural study the full subcategory $KL_k^{ss}(\frg)$ of $KL_k(\frg)$ consisting of finite length generalized modules for $L_k(\frg)$ on which $h(0)$ act semisimply for all $h \in \frh$. The tensor structure on the category $KL_k^{ss}(\gl(1|1))$ has been studied in \cite{CMY2}.

In this subsection, we will study the detailed structure of the category $KL_1^{ss}(\mathfrak{sl}(n|m))$. Let's first study the case $n \neq m$. Let $\cC$ be the Deligne product of the category $\cC(m)$, $KL_1(\sl_n)$ and the category of ordinary $M_y(1)$-modules with semisimple $y(0)$-actions, it is clear that $\cC$ is a semisimple braided tensor category and so is its direct sum completion $\cC^{\oplus}$.
By Proposition \ref{voaext1}, the vertex operator superalgebra $L_1(\mathfrak{sl}(n|m))$ is a simple current extension of the vertex operator algebra $C(m)\otimes L_{1}(\mathfrak{sl}_n) \otimes M(1)$ in $\cC^{\oplus}$ (we omit the subscript $y$ in $M(1)$ for convenience).
Set $A = L_1(\sl(n|m))$, denote by $\mathcal{F}_A: \cC^{\oplus} \rightarrow (\cC^{\oplus})_A$ the induction functor and by $\mathcal G_A: (\cC^{\oplus})_A \rightarrow \cC^{\oplus}$ the restriction functor, i.e. for $X$ a $\mathcal C$-module $\mathcal F_A(X)$ is a not necessarily local $A$-module and for an $A$-module $Y$,  $\mathcal G_A(Y)$ is $Y$ viewed as an object in $\mathcal C^{\oplus}$.
One has
\[
\mathcal G_A\left(\mathcal F_A \left(X\right) \right) \cong   A \boxtimes X.
\]
By Theorem \ref{sum1}(5), $\mathcal{F}_A$ is a braided tensor functor if restricted to the subcategory of modules that centralize $A$, i.e. restricting to those modules that induce via $\cF_A$ to local $A$-modules. We first prove the following results:

\begin{lemma}\label{indirr}
If $X$ is a simple object in $\cC$, then $\cF_A(X)$ is a simple object in $\cC_{A}$.
\end{lemma}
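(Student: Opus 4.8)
The plan is to realize $A$ as a fixed-point free simple current extension of $V := C(m)\otimes L_1(\sl_n)\otimes M_y(1)$ and then apply the simplicity criterion of Proposition \ref{prop:simplicity}. By Proposition \ref{voaext1} we have, in $\cC^{\oplus}$,
\[
A = L_1(\sl(n|m)) = \bigoplus_{s\in\ZZ} J^s, \qquad J^s \cong W_{-1}(\lambda_s)\boxtimes L_1(\omega_{\ov s})\boxtimes M_y(1,\mu s),
\]
where $J = W_{-1}(\lambda_1)\boxtimes L_1(\omega_{\ov 1})\boxtimes M_y(1,\mu)$ is a product of a simple current in each tensor factor: $W_{-1}(\lambda_1)$ is invertible by the fusion rules \eqref{fusionrules}, the level-one modules $L_1(\omega_j)$ form the cyclic fusion group $\ZZ/n\ZZ$, and every Heisenberg Fock module is invertible. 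Thus each $J^s$ is an invertible object of the semisimple braided tensor category $\cC$, so $A$ is genuinely a simple current extension with $V\subset A^0$, as Proposition \ref{prop:simplicity} requires.

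Next I would verify the two hypotheses of Proposition \ref{prop:simplicity} for the simple object $X$. Since $\cC$ is the Deligne product of three semisimple categories, $X$ factors as an external product $X \cong X_1 \boxtimes X_2 \boxtimes M_y(1,\nu)$ with $X_1\in\cC(m)$, $X_2\in KL_1(\sl_n)$ simple and $\nu\in\CC$. Because each $J^s$ is invertible, every $J^s\boxtimes X$ is again simple, which gives the first hypothesis. The mechanism behind the conclusion is the Frobenius reciprocity computation
\[
\End_{\cC_A}(\cF_A(X)) \cong \Hom_{\cC}\big(X, \cG_A\cF_A(X)\big) = \bigoplus_{s\in\ZZ}\Hom_{\cC}\big(X, J^s\boxtimes X\big),
\]
using Theorem \ref{sum1}(3) and $\cG_A\cF_A(X)\cong A\boxtimes X$; each summand is at most one-dimensional since all objects in sight are simple, and it is nonzero precisely when $J^s\boxtimes X\cong X$.

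The crux---and the step I expect to be the only real obstacle---is \emph{fixed-point freeness}: that $J^s\boxtimes X\cong X$ forces $s=0$, equivalently $J^s\boxtimes X\not\cong J^t\boxtimes X$ for $s\neq t$. To see this I would isolate the Heisenberg component. Since fusion of Fock modules adds $y(0)$-weights,
\[
J^s\boxtimes X \cong (W_{-1}(\lambda_s)\boxtimes X_1)\boxtimes(L_1(\omega_{\ov s})\boxtimes X_2)\boxtimes M_y(1,\mu s+\nu),
\]
and because $n\neq m$ the constant $\mu=\sqrt{(m-n)/(mn)}$ is nonzero, so $s\mapsto \mu s+\nu$ is injective. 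The third tensor factors therefore carry pairwise distinct $y(0)$-eigenvalues and are pairwise non-isomorphic, which yields fixed-point freeness. Consequently the endomorphism algebra above collapses to the single $s=0$ term $\Hom_{\cC}(X,X)=\CC$, and Proposition \ref{prop:simplicity} gives that $\cF_A(X)$ is simple in $\cC_A$. Once the simple-current decomposition of Proposition \ref{voaext1} is in hand, everything apart from this Heisenberg-weight bookkeeping is formal.
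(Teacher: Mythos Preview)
Your proof is correct and follows essentially the same approach as the paper: verify the hypotheses of Proposition \ref{prop:simplicity} by checking that the $J^s\boxtimes X$ are pairwise non-isomorphic simple objects, then conclude. The only minor difference is which tensor factor you use to establish distinctness: the paper cites the fusion rules \eqref{fusionrules} to see that the first factor $W_{-1}(\lambda_{s+i})$ already separates the $J^i\boxtimes X$, whereas you use the Heisenberg factor $M_y(1,\mu i+\nu)$ and the injectivity of $i\mapsto \mu i$ (valid since $n\neq m$). Both work; your Frobenius-reciprocity paragraph is a nice gloss on why Proposition \ref{prop:simplicity} holds but is not needed for the argument.
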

\begin{proof}
For a simple object $X$ in $\cC$ of the form
\[
M_{s,a,\lambda} \overset{\tiny\mbox{def}}{=} W_{-1}(\lambda_s) \otimes L_1(\omega_a) \otimes  M_y(1, \lambda)
\]
for $s \in \ZZ$, $a \in \ZZ/n\ZZ$ and $\lambda \in \CC$, it follows from (\ref{fusionrules}) that for $i \in \ZZ$,
\[
M_{s, a, \lambda} \btimes M_{i, \overline{i}, \mu i} = M_{s+i, a+\overline{i}, \lambda + \mu i},
\]
thus
\[
M_{s, a, \lambda} \btimes M_{i, \overline{i}, \mu i} \ncong M_{s, a, \lambda} \btimes M_{j, \overline{j}, \mu j}
\]
unless $i = j$. Then it follows from Proposition \ref{prop:simplicity} that $\cF_A(X)$ is simple.
\end{proof}
Note that
\[
\mathcal G_A\left(\mathcal F_A \left(M_{s,a,\lambda}\right) \right) \cong   A \boxtimes M_{s,a,\lambda} \cong \bigoplus_{n \in \mathbb Z} M_{s+n,a+n,\lambda+\mu n}.
\]
\begin{theorem}\label{kl}
Every object in $KL_1^{ss}(\sl(n|m))$ is isomorphic to a direct sum of induced objects. In particular, the category $KL_1^{ss}(\sl(n|m))$ is semisimple such that all simple objects are induced objects from simple objects in $\cC$.
\end{theorem}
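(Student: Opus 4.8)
The plan is to exploit the induction--restriction adjunction for the simple current extension $A=L_1(\sl(n|m))$ of $B:=C(m)\otimes L_1(\sl_n)\otimes M_y(1)$ inside $\cC^{\oplus}$. The central step is to show that for any object $M$ of $KL_1^{ss}(\sl(n|m))$ the restriction $\cG_A(M)$ is a semisimple object of $\cC^{\oplus}$, that is, a (possibly infinite) direct sum of simple objects $M_{s,a,\lambda}=W_{-1}(\lambda_s)\otimes L_1(\omega_a)\otimes M_y(1,\lambda)$ of $\cC$. Granting this, Frobenius reciprocity together with the simplicity of the induced objects (Lemma \ref{indirr}) produces enough simple $A$-submodules of $M$ to exhaust it, forcing $M$ to be a direct sum of induced objects.

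The hard part is this first step, the semisimplicity of $\cG_A(M)$, and I would argue it exactly as in the proof of Theorem \ref{cor:beyondcollapsing}. Since $M$ has finite length it is grading-restricted, hence so is $\cG_A(M)$ viewed as a $B$-module. Given a homogeneous $w\in M$, let $M_1,M_2,M_3$ be the submodules generated by $w$ under the operators coming from $C(m)\otimes\one\otimes\one$, $\one\otimes L_1(\sl_n)\otimes\one$ and $\one\otimes\one\otimes M_y(1)$ respectively; each $M_i$ is then an ordinary module for its factor. Now each of the three relevant module categories is semisimple: $\cC(m)=KL_{-1}(\sl_m)$ is semisimple by Theorem \ref{KLcoinside}, $L_1(\sl_n)$ is rational, and the category of $M_y(1)$-modules on which $y(0)$ acts semisimply is semisimple by \cite[Thm.~1.7.3]{FLM} (here we use the $y(0)$-semisimplicity built into $KL_1^{ss}$). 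Hence each $M_i$ is a direct sum of simples, so by \cite[Thm.~4.7.2]{FHL} the outer tensor product $M_1\otimes M_2\otimes M_3$ is a semisimple $B$-module, and the natural surjection $M_1\otimes M_2\otimes M_3\twoheadrightarrow B\cdot w$ shows that every cyclic $B$-submodule of $M$ is semisimple. Therefore $\cG_A(M)$ is a sum of simple $B$-submodules, hence semisimple and an object of $\cC^{\oplus}$; in particular $M$ lies in $(\cC^{\oplus})_A^{loc}$.

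To finish, write $\cG_A(M)=\bigoplus_{j}X_j$ with each $X_j$ a simple object of $\cC$. Each inclusion $X_j\hookrightarrow\cG_A(M)$ corresponds, under the Frobenius reciprocity isomorphism $\Hom_{(\cC^{\oplus})_A}(\cF_A(X_j),M)\cong\Hom_{\cC^{\oplus}}(X_j,\cG_A(M))$ of Theorem \ref{sum1}(3), to a nonzero morphism $\cF_A(X_j)\to M$; since $\cF_A(X_j)$ is simple by Lemma \ref{indirr}, this morphism is injective, so $\cF_A(X_j)$ is a simple $A$-submodule of $M$ (being a submodule of the local module $M$, it is itself local, hence an object of $KL_1^{ss}$). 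Because $X_j$ is the $s=0$ summand of $\cG_A(\cF_A(X_j))\cong A\boxtimes X_j$, the restriction of the submodule $N:=\sum_j\cF_A(X_j)$ contains $\bigoplus_j X_j=\cG_A(M)$; as $\cG_A$ is faithful (it is the identity on underlying vector spaces) this forces $N=M$. Thus $M$ is a sum of simple submodules, hence semisimple, and so equals a direct sum of a subfamily of the $\cF_A(X_j)$; when $M$ is itself simple this exhibits it as a single induced object $\cF_A(X)$. This proves the theorem. The case $n=m$ is handled identically, using the decomposition of Proposition \ref{decomp3} in place of Proposition \ref{voaext1}.
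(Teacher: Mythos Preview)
Your approach is essentially the same as the paper's: restrict to $B$, argue that the restriction is a direct sum of simple objects of $\cC$, and then use Frobenius reciprocity together with Lemma \ref{indirr} to write $M$ as a sum of simple induced submodules. The DMZ-style reduction via $M_1\otimes M_2\otimes M_3$ and the concluding Frobenius argument both match the paper's proof almost verbatim.

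There is, however, a genuine gap in your first step when $m=2$. You write ``$\cC(m)=KL_{-1}(\sl_m)$ is semisimple by Theorem \ref{KLcoinside}'', but the identification $C(m)=L_{-1}(\sl_m)$ only holds for $m\geq 3$ (see the discussion following \eqref{defnC2M}). For $m=2$ the algebra $C(2)$ is a nontrivial simple current extension of $L_{-1}(\sl_2)$ (the rectangular $W$-algebra of $\sl_4$ at level $-5/2$), so Theorem \ref{KLcoinside} applied to $L_{-1}(\sl_2)$ does not directly give complete reducibility of grading-restricted generalized $C(2)$-modules, nor does it tell you that the simple summands land in $\cC(2)$. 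The paper treats this case separately in Lemma \ref{cor:c2}, where one first decomposes a $C(2)$-module as an $L_{-1}(\sl_2)$-module, then explicitly computes the induced modules $\cF(L_{-1}(\lambda_s))$ in $\cC(2)$ (using the fusion rules determined in Example \ref{vp}) to conclude that every induced module, and hence every grading-restricted generalized $C(2)$-module, decomposes as a direct sum of the $W_{-1}(\lambda_t)$. Without this extra argument your proof is incomplete for $m=2$.
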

\begin{proof}
The proof is similar to the one of Theorem \ref{cor:beyondcollapsing}, we briefly sketch it here.

Let $X$ be an object in $KL_1^{ss}(\sl(n|m))$, we first show that $\cG_A$ maps an object in $KL_1^{ss}(\sl(n|m))$ to $\cC^{\oplus}$, i.e. $X \in KL_1^{ss}(\sl(n|m))$ regarded as a $C(m)\otimes L_{1}(\mathfrak{sl}_n) \otimes M(1)$-module is completely reducible and each simple summand is an object in $\cC$. From Theorem \ref{KLcoinside}, grading-restricted generalized modules for $C(m) = L_{-1}(\sl_m)$, $m \geq 3$ are completely reducible with simple summand in $\cC(m)$, we defer the proof for the algebra $C(2)$ in the following lemma as it is slightly different.
Also, the grading-restricted generalized $M(1)$-modules with semisimple $y(0)$ action are completely reducible (see for example \cite[Thm.~1.7.3]{FLM}), the same statement holds for $L_1(\sl_n)$ because it is rational and $C_2$-cofinite. Note that $X$, regarded as a $C(m)\otimes L_{1}(\mathfrak{sl}_n) \otimes M(1)$-module, is a grading-restricted generalized module, then it follows from the proof of \cite[Prop.~2.7]{DMZ} (see also Theorem \ref{cor:beyondcollapsing}) that $X$ is a direct sum of irreducible $C(m)\otimes L_{1}(\mathfrak{sl}_n) \otimes M(1)$-modules, i.e.,
\[
\mathcal G_A(X) \cong \bigoplus_{i \in I}X_i,
\]
where $X_i$ are simple objects in $\cC$.

Choose a summand $X_i$. By Frobenius reciprocity \eqref{eqn:fro-rec}
\[
\text{Hom}_{\cC_A^{\oplus}}\left(\mathcal F_A(X_i), X \right) \cong \text{Hom}_{\cC^\oplus}\left(X_i, \mathcal G_A(X)\right)
\]
and so there is a non-zero homomorphism from $\mathcal F_A(X_i)$ to $X$. By the previous lemma, $\mathcal F_A(X_i)$ is simple and thus $\mathcal F_A(X_i)$ is a submodule of $X$. It follows that $X$ equals the sum of simple modules $\cF_A(X_i)$, and therefore $X$ also equals a direct sum $\bigoplus_{i \in J}\cF_A(X_i)$ for a subset $J \subset I$.
\end{proof}

\begin{lemma}\label{cor:c2}
Every grading-restricted generalized module for the algebra $C(2)$ is completely reducible, and each simple summand is in $\cC(2)$.
\end{lemma}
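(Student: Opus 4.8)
The plan is to reduce the statement to the already-established semisimplicity of $\cO_{-1}(\sl_2)$ and then to transport it across the extension $L_{-1}(\sl_2)\subset C(2)$. First I would note that $C(2)$ and $L_{-1}(\sl_2)$ share the same conformal vector, as is visible from the decomposition $C(2)=W_{-1}(\lambda_0)=\bigoplus_{i\ge 0}L_{-1}(\lambda_{2i})$ as $L_{-1}(\sl_2)$-modules appearing in \eqref{defnC2M}. Consequently any grading-restricted generalized $C(2)$-module $X$ restricts to a grading-restricted generalized $L_{-1}(\sl_2)$-module, i.e.\ an object of $\cO_{-1}(\sl_2)$. By Proposition \ref{generic} together with Theorem \ref{KLcoinside}, the category $\cO_{-1}(\sl_2)$ is semisimple, so $\cG_{C(2)}(X)$ is a (possibly infinite) direct sum of the simple modules $L_{-1}(\lambda_n)$ and hence lies in $\cO_{-1}(\sl_2)^{\oplus}$. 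By Theorem \ref{sum2}(2) this realizes $X$ as a local $C(2)$-module, an object of $\big(\cO_{-1}(\sl_2)^{\oplus}\big)^{\mathrm{loc}}_{C(2)}$, which is a braided tensor category by Theorem \ref{sum1}(1).

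Next I would prove complete reducibility. Since $\mathcal{C}(2)$ is rigid with fusion rules \eqref{fusionrules} and the unit $C(2)=W_{-1}(\lambda_0)$ has categorical dimension $1\neq 0$ by Lemma \ref{dimJ}, the algebra $C(2)$ is separable (indeed a special symmetric Frobenius algebra) in the semisimple braided category $\cO_{-1}(\sl_2)^{\oplus}$. A standard consequence is that the category of local $C(2)$-modules is semisimple: the restriction $\cG_{C(2)}$ lands in the semisimple category $\cO_{-1}(\sl_2)^{\oplus}$, and the separability idempotent lets one average any $L_{-1}(\sl_2)$-splitting of a short exact sequence of local $C(2)$-modules into a genuine $C(2)$-module splitting. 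Thus $X$ is completely reducible as a $C(2)$-module. Alternatively, and more concretely, once a simple $C(2)$-submodule $Y\cong W_{-1}(\lambda_s)$ is produced, invertibility of $W_{-1}(\lambda_s)$ (from \eqref{fusionrules}) lets me apply the autoequivalence $W_{-1}(\lambda_{-s})\boxtimes_{C(2)}(-)$ to the inclusion $Y\hookrightarrow X$, reducing to an embedding of the unit $C(2)$, which splits off because the regular module over a Frobenius algebra is injective; iterating on the complement (finitely often in each weight, as the weight spaces of $X$ are finite dimensional) yields $X\cong\bigoplus_k W_{-1}(\lambda_{s_k})$.

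The main obstacle will be the classification input used above: that every simple grading-restricted $C(2)$-module $Y$ is isomorphic to some $W_{-1}(\lambda_s)$. Here I would take the $L_{-1}(\sl_2)$-summand $L_{-1}(\lambda_m)$ of minimal conformal weight in $\cG_{C(2)}(Y)$; its lowest weight vector generates $Y$ over $C(2)$, so Frobenius reciprocity \eqref{eqn:fro-rec} gives a surjection from the induced module $\cF_{C(2)}(L_{-1}(\lambda_m))=C(2)\boxtimes_{L_{-1}(\sl_2)}L_{-1}(\lambda_m)$ onto $Y$. It then remains to identify the simple quotients of this induced module, whose $L_{-1}(\sl_2)$-restriction is governed by the Clebsch--Gordan fusion rules of $\cO_{-1}(\sl_2)$ and carries composition factors $L_{-1}(\lambda_c)$ with multiplicities greater than one. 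I expect the free-field realization of $C(2)$ inside $\beta\gamma^{\otimes 2}$ (Theorem \ref{CKLR} and the decomposition $\beta\gamma^{\otimes 2}=\bigoplus_{s\in\ZZ}W_{-1}(\lambda_s)\otimes M_{h_{\beta\gamma}}(1,s)$) to pin these simple quotients down to $W_{-1}(\lambda_{m})$ and $W_{-1}(\lambda_{-m})$. Controlling this induced module is precisely the delicate point, and is exactly where the argument departs from the case $m\ge 3$, in which $C(m)=L_{-1}(\sl_m)$ and no nontrivial extension is present.
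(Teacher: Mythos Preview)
Your opening reduction is exactly right and matches the paper: restrict $X$ to $L_{-1}(\sl_2)$, invoke Theorem \ref{KLcoinside} to write $\cG_{C(2)}(X)=\bigoplus_i X_i$ with each $X_i\cong L_{-1}(\lambda_{s_i})$, and then use Frobenius reciprocity to produce maps $\cF_{C(2)}(X_i)\to X$. You also correctly identify the crux: controlling the induced modules $\cF_{C(2)}(L_{-1}(\lambda_s))$.

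The gap is that neither of your two routes around this crux actually closes. The separability argument is problematic because $C(2)=\bigoplus_{i\ge 0}L_{-1}(\lambda_{2i})$ is an \emph{infinite} direct sum in $KL_{-1}(\sl_2)^{\oplus}$, so the standard ``$\dim A\neq 0\Rightarrow$ separable'' criterion does not apply; moreover Lemma \ref{dimJ} computes the dimension of $W_{-1}(\lambda_0)$ in $\cC(m)$ (where it is the tensor unit, trivially of dimension $1$), not in $KL_{-1}(\sl_2)^{\oplus}$. Your alternative via invertibility presupposes both injectivity of $C(2)$ as a module over itself, which is not clear in this infinite setting, and the very classification you are trying to establish. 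Finally, your appeal to the free-field realization to identify the simple quotients of $\cF_{C(2)}(L_{-1}(\lambda_m))$ is left as an expectation rather than an argument.

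The paper resolves the crux by computing $\cF_{C(2)}(L_{-1}(\lambda_s))$ explicitly. Using the Clebsch--Gordan fusion rules of $KL_{-1}(\sl_2)$ from Example \ref{vp}, one proves by induction on $s\ge 0$ that
\[
\cG_{C(2)}\bigl(\cF_{C(2)}(L_{-1}(\lambda_s))\bigr)\;=\;C(2)\boxtimes L_{-1}(\lambda_s)\;\cong\;\bigoplus_{\substack{|t|\le s\\ t\equiv s\ (\mathrm{mod}\ 2)}}\cG_{C(2)}\bigl(W_{-1}(\lambda_t)\bigr),
\]
and a second application of Frobenius reciprocity (with targets $W_{-1}(\lambda_t)$) upgrades this to a $C(2)$-module isomorphism $\cF_{C(2)}(L_{-1}(\lambda_s))\cong\bigoplus_{|t|\le s,\ t\equiv s}W_{-1}(\lambda_t)$. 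Thus each induced module is a \emph{finite} direct sum of objects of $\cC(2)$; since the maps $\cF_{C(2)}(X_i)\to X$ jointly surject, $X$ is a quotient of a semisimple module and hence semisimple with summands in $\cC(2)$. Incidentally, this shows that $\cF_{C(2)}(L_{-1}(\lambda_m))$ has $m+1$ simple quotients, not just $W_{-1}(\lambda_{\pm m})$ as you anticipated; your conclusion about $Y$ would still follow, but only once this full decomposition is in hand.
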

\begin{proof}
Let $X$ be a grading-restricted generalized module for $C(2)$ (recall the definition in \eqref{defnC2}, \eqref{defnC2M}). Then $X$ is also a grading-restricted generalized module if regarded as an $L_{-1}(\sl_2)$-module. From Theorem \ref{KLcoinside}, $X$ is completely reducible, i.e.,
\[
\cG(X) = \bigoplus_{i \in I} X_i,
\]
where $\cG: KL_{-1}(\sl_2)^{\oplus}_{C(2)} \rightarrow KL_{-1}(\sl_2)^{\oplus}$ is the restriction functor, $X_i$, $i \in I$ are irreducible $L_{-1}(\sl_2)$-modules. By Frobenius reciprocity, there is a nonzero $C(2)$-module map $\cF(X_i) \rightarrow X$, where $\cF: KL_{-1}(\sl_2)^{\oplus} \rightarrow KL_{-1}(\sl_2)^{\oplus}_{C(2)}$ is the induction functor.

Next we show that the induced objects $\cF(X_i)$ are in the category $\cC(2)$, in particular, they are completely reducible. We claim that as $L_{-1}(\sl_2)$-modules:
\begin{equation}\label{inductionL-1}
\cG(\cF(L_{-1}(\l_s))) = \cG(W_{-1}(\l_0))\btimes L_{-1}(\l_s) = \bigoplus_{\substack{t = -s\\ t \equiv s\; ({\rm mod}\; 2)}}^{s} \cG(W_{-1}(\l_t))
\end{equation}
for $s \in \ZZ_{\geq 0}$.
From the fusion products of $L_{-1}(\sl_2)$-modules determined in Example \ref{vp},
\begin{align*}
\cG(W_{-1}(\l_0))\btimes L_{-1}(\l_1) &= \bigg(\bigoplus_{n \geq 0}L_{-1}(\l_{2n})\bigg)\btimes L_{-1}(\l_1)\\
&= L_{-1}(\l_1) \oplus \bigoplus_{n \geq 1}(L_{-1}(\l_{2n-1})\oplus L_{-1}(\l_{2n+1}))\\
&= \cG(W_{-1}(\l_{-1})) \oplus \cG(W_{-1}(\l_1)),
\end{align*}
and
\begin{align*}
\cG(W_{-1}(\l_0))\btimes L_{-1}(\l_2) &= \bigg(\bigoplus_{n \geq 0}L_{-1}(\l_{2n})\bigg)\btimes L_{-1}(\l_2)\\
&= L_{-1}(\l_2) \oplus \bigoplus_{n \geq 1}(L_{-1}(\l_{2n-2})\oplus L_{-1}(\l_{2n}) \oplus L_{-1}(\l_{2n+2}))\\
&= \cG(W_{-1}(\l_{-2})) \oplus \cG(W_{-1}(\l_0)) \oplus \cG(W_{-1}(\l_2)).
\end{align*}

Now we assume \eqref{inductionL-1} holds for less than or equal to $s$, then
\begin{align*}
\cG(\cF(L_{-1}(\l_{s})\btimes L_{-1}(\l_{1})))
&= \cG(\cF(L_{-1}(\l_{s+1}) \oplus L_{-1}(\l_{s-1}))) \\
&= \cG(W_{-1}(\l_0))\btimes L_{-1}(\l_{s+1}) \oplus \cG(W_{-1}(\l_0))\btimes L_{-1}(\l_{s-1}) \\
&= \cG(W_{-1}(\l_0))\btimes L_{-1}(\l_{s+1}) \oplus \bigg(\bigoplus_{t = -s+1,\; t \equiv s-1\; ({\rm mod}\; 2)}^{s-1} \cG(W_{-1}(\l_t))\bigg),
\end{align*}
On the other hand,
\begin{align*}
\cG(\cF(L_{-1}(\l_{s})\btimes L_{-1}(\l_{1})))
& = \cG(W_{-1}(\l_0))\btimes (L_{-1}(\l_{s})\btimes L_{-1}(\l_{1}))\\
& = (\cG(W_{-1}(\l_0))\btimes L_{-1}(\l_{s}))\btimes L_{-1}(\l_{1})\\
& = \bigg(\bigoplus_{t = -s,\; t \equiv s\; ({\rm mod}\; 2)}^{s} \cG(W_{-1}(\l_t))\bigg) \btimes L_{-1}(\l_1)\\
& = \bigoplus_{t = -s,\; t \equiv s\; ({\rm mod}\; 2)}^{s} (\cG(W_{-1}(\l_{t+1}))\oplus \cG(W_{-1}(\l_{t-1})))\\
& = \bigoplus_{\substack{t = -s-1\\ t \equiv s+1\; ({\rm mod}\; 2)}}^{s+1} \cG(W_{-1}(\l_t))\oplus \bigg(\bigoplus_{t = -s+1,\; t \equiv s-1\; ({\rm mod}\; 2)}^{s-1} \cG(W_{-1}(\l_t))\bigg),
\end{align*}
By comparing the right hand sides of the above two equations, we prove that \eqref{inductionL-1} holds for $s+1$, and therefore the claim is proved.

Now we use Frobenius reciprocity \eqref{eqn:fro-rec}:
\begin{align*}
\Hom_{\cC(2)}(\cF(L_{-1}(\l_s)), W_{-1}(\l_t)) &= \Hom_{KL_{-1}(\sl_2)^{\oplus}}(L_{-1}(\l_s), \cG(W_{-1}(\l_t)))\\
&=\begin{cases}
\CC \;\;\; \mathrm{if} \; |t|\leq s, \; t \equiv s \; (\mathrm{mod}\; 2) \\
0 \;\;\; \mathrm{else},
\end{cases}
\end{align*}
thus there is a surjective $C(2)$-module map
\[
\cF(L_{-1}(\l_s)) \rightarrow \bigoplus_{\substack{t = -s\\ t \equiv s \; (\mathrm{mod}\; 2)}}^s W_{-1}(\l_t).
\]
Combining with the $L_{-1}(\sl_2)$-module identification \eqref{inductionL-1}, we proved that as $C(2)$-modules
\[
\cF(L_{-1}(\l_s)) \cong \bigoplus_{\substack{t = -s\\ t \equiv s \; (\mathrm{mod}\; 2)}}^s W_{-1}(\l_t)
\]
for $s \in \ZZ_{\geq 0}$. The case for $s \in \ZZ_{<0}$ can be proved similarly.

Now we have shown that there is a surjective $C(2)$-module map $\bigoplus_{i \in I}\cF(X_i) \rightarrow X$ and each $\cF(X_i)$ is completely reducible, so $X$ is also completely reducible. Moreover, since each simple summand of the induced objects is in $\cC(2)$, $X$ must be in the direct sum completion $\cC(2)^{\oplus}$.
\end{proof}

In the above argument we have only used a few properties of the category $\cC$ and the algebra object $A$, namely that $\cC$ is a semisimple braided tensor category with simple currents $\{ A_i | i \in \mathbb Z\}$ such that
$A = \bigoplus_{i\in \mathbb Z} A_i$
is a fixed-point free super commutative algebra object in the direct sum completion.  In other words the following theorem is true and Theorem \ref{kl} and Theorem \ref{cor:beyondcollapsing} are both special cases of it.
\begin{theorem}\label{thm:semisimple}
Let $\cC$ be a semisimple braided tensor category and let $A$ be a simple current extension that is a fixed-point free super commutative algebra object in the direct sum completion of $\cC$.  Then the category of local modules of $A$ that lie in the direct sum completion of $\cC$ is semisimple and every object is isomorphic to an induced object.
\end{theorem}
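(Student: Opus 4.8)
The plan is to abstract the argument already carried out in the proofs of Theorem \ref{cor:beyondcollapsing} and Theorem \ref{kl}, where only the properties now taken as hypotheses were actually used. Write $A = \bigoplus_{i \in \ZZ} A_i$ with the $A_i$ inequivalent simple currents and with $A_0 = \one_{\cC}$, and let $\cF_A, \cG_A$ denote the induction and restriction functors between $\cC^{\oplus}$ and $(\cC^{\oplus})_A$. I work throughout in $\cC^{\oplus}$, which is an abelian braided tensor category because $\cC$ is semisimple, so that Theorem \ref{sum1} and Proposition \ref{prop:simplicity} apply verbatim with $\cC$ replaced by $\cC^{\oplus}$.

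The first step is to record the simplicity of induced objects. For a simple object $X$ of $\cC$, the restriction of $\cF_A(X)$ is $\cG_A(\cF_A(X)) \cong A \btimes X = \bigoplus_{i \in \ZZ} A_i \btimes X$. Since each $A_i$ is invertible, each nonzero $A_i \btimes X$ is again simple, and fixed-point freeness of $A$ says precisely that $A_i \btimes X \ncong A_j \btimes X$ for $i \neq j$. These are exactly the hypotheses of Proposition \ref{prop:simplicity}, so $\cF_A(X)$ is a simple object of $(\cC^{\oplus})_A$.

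Next, take an arbitrary local $A$-module $X$ whose underlying object lies in $\cC^{\oplus}$. Since $\cC^{\oplus}$ is semisimple, its restriction decomposes as $\cG_A(X) \cong \bigoplus_{i \in I} X_i$ with each $X_i$ simple in $\cC$. For each $i$, the inclusion $X_i \hookrightarrow \cG_A(X)$ corresponds under Frobenius reciprocity \eqref{eqn:fro-rec} to a nonzero morphism $f_i \colon \cF_A(X_i) \to X$. Because $\cF_A(X_i)$ is simple, $f_i$ is injective, so its image is a simple $A$-submodule of $X$ isomorphic to $\cF_A(X_i)$; being a submodule of the local module $X$, this image is itself local. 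Tracing through the unit of the adjunction shows that the summand $X_i$ of $\cG_A(X)$ is contained in $\cG_A(\mathrm{im}\, f_i)$, whence $\cG_A\!\big(\sum_{i \in I} \mathrm{im}\, f_i\big) = \cG_A(X)$ and therefore $\sum_{i \in I}\mathrm{im}\, f_i = X$ since $\cG_A$ is faithful on subobjects. Thus $X$ is a sum of simple local submodules, hence semisimple, and equals $\bigoplus_{i \in J}\cF_A(X_i)$ for some subset $J \subseteq I$; in particular every such $X$ is a direct sum of induced objects, and the category is semisimple.

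The main obstacle will be the super/graded bookkeeping hidden behind the cited black boxes: because $A$ is only a super commutative algebra object, one must ensure that Proposition \ref{prop:simplicity} and Frobenius reciprocity are invoked in their super versions (as in Theorem \ref{sum2}(3)) and that locality genuinely passes to $A$-submodules for a super commutative $A$. Once these structural facts are in place the argument is formal, the only essential category-theoretic input being the simplicity criterion of Proposition \ref{prop:simplicity}, which is exactly where the fixed-point free hypothesis is consumed.
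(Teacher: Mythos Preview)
Your proposal is correct and follows essentially the same approach as the paper: the paper does not give a separate proof of this theorem but simply remarks that the arguments of Theorems \ref{kl} and \ref{cor:beyondcollapsing} used only the abstract hypotheses listed, and you have faithfully extracted exactly that argument (Proposition \ref{prop:simplicity} for simplicity of induced objects, then Frobenius reciprocity to exhibit every local module as a sum of simple induced submodules). Your added detail about the unit of the adjunction and the caveat about the super bookkeeping are fine glosses but do not depart from the paper's method.
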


Next we shall identify the simple objects in $KL_1^{ss}(\sl(n|m))$, from Theorem \ref{sum2}(5) and Theorem \ref{kl}, we only need to find the local induced modules from simple objects in $\cC$, which are of the form
\[
M_{s,a,\lambda} = W_{-1}(\lambda_s) \otimes L_1(\omega_a) \otimes M_y(1, \lambda)
\]
for $s \in \ZZ$, $a \in \ZZ/n\ZZ$ and $\lambda \in \CC$. The lowest conformal weights are
\[
h_{s,a,\lambda} = \frac{|s|(m+|s|)}{2m} +\frac{a(n-a)}{2n} +  \frac{\lambda^2}{2}.
\]

By Theorem \ref{sum1}(4), the induced module $\mathcal{F}_A(M_{s,a,\lambda})$ is local iff the monodromy $$\cM_{M_{1,\bar{1},\mu}, M_{s,a,\lambda}} = \one_{M_{1,\bar{1},\mu} \btimes M_{s,a,\lambda}},$$ then it follows from the balancing axiom that
\[
h_{s,a,\lambda} \equiv h_{s+1,\overline{a+1},\lambda+\mu}\;\;\; ({\rm mod}\;\; \ZZ).
\]
Thus $s,a,\lambda$ satisfy
\[
\frac{2s+1}{2m} -\frac{2a+1}{2n} + \frac{\mu^2}{2} + \lambda\mu \in \ZZ,
\]
i.e.,
\[
\lambda \mu - \frac{a}{n}+\frac{s}{m} \in \ZZ.
\]
Let $z \in \ZZ$ be such that $\lambda \mu + \frac{s}{m} = \frac{a}{n} + z$.

We now want to identify these modules as certain irreducible modules for the affine superalgebra $L_1(\mathfrak{sl}(n|m))$. For this purpose, we use the conventions and identifications of Section \ref{sec:slnm} and we introduce a convenient notation. Let $r \in \ZZ$ and set
\begin{equation}\label{eqn:convention}
L_1(r\omega_{n+m-1}) := \begin{cases} L_1(r\omega_{n+m-1}), &\quad r\geq 0 \\  L_1(-r\omega_{n+m-1})^*, & \quad r<0 \end{cases}
\end{equation}

We first show that the conformal gradings of the induced modules $\mathcal{F}_A(M_{a, s,\lambda})$ are bounded from below. i.e., $\{h_{s+t, \overline{a+t}, \lambda+\mu t}|t \in \ZZ\}$ have a lower bound. Moreover, we determine the lower bound as well as the top space. This is a straightforward computation, that we defer to the Appendix $7.2$.

\begin{prop}\label{simpleind}
Let $s \in \ZZ$, $a \in \ZZ/n\ZZ$ and $\lambda \in \mathbb R$ such that $\lambda \mu + \frac{s}{m} - \frac{a}{n} \in \ZZ$.
Then the induced modules are the highest-weight modules
\[
\mathcal{F}_A(M_{s,a,\lambda}) \cong L_1(n(\mu^2s-\lambda\mu)\omega_{n+m-1}).
\]
\end{prop}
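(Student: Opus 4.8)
The plan is to realize $\mathcal{F}_A(M_{s,a,\lambda})$ explicitly through its restriction to $\cC$ and then read off its lowest-weight space. By the fusion rules \eqref{fusionrules} and Proposition \ref{voaext1},
\[
\mathcal{G}_A(\mathcal{F}_A(M_{s,a,\lambda})) \cong A \btimes M_{s,a,\lambda} \cong \bigoplus_{t \in \ZZ} M_{s+t, \overline{a+t}, \lambda+\mu t},
\]
so the summands carry lowest conformal weights $h_t := h_{s+t, \overline{a+t}, \lambda+\mu t}$. The standing hypothesis $\lambda\mu + \frac{s}{m} - \frac{a}{n} \in \ZZ$ is precisely the locality condition obtained above from Theorem \ref{sum1}(4) and the balancing axiom, so $\mathcal{F}_A(M_{s,a,\lambda})$ is local, hence a genuine grading-restricted $L_1(\sl(n|m))$-module; by Lemma \ref{indirr} it is simple.

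First I would show that the family $\{h_t\}_{t\in\ZZ}$ is bounded below and attains a minimum. In
\[
h_t = \frac{|s+t|(m+|s+t|)}{2m} + \frac{\overline{a+t}(n-\overline{a+t})}{2n} + \frac{(\lambda+\mu t)^2}{2},
\]
with $\lambda, \mu \in \RR$ the first and third terms grow quadratically in $t$ whereas the middle term stays in $[0, n/8]$; hence $h_t \to +\infty$ as $|t|\to\infty$ and some $t_0$ minimizes $h_t$. Because locality forces all the $h_t$ to be congruent modulo $\ZZ$, the minimal value $h_{t_0}$ is the bottom of the $L_0$-grading of the induced module, and the summand(s) realizing it constitute its top space. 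The explicit determination of $t_0$ and of $h_{t_0}$ is the routine---if fiddly---computation deferred to the Appendix.

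Next I would identify the top space as a module for $\sl(n|m)$. As a module for the even part its content is $E^{\lambda_{s+t_0}} \otimes L_1(\omega_{\overline{a+t_0}})^{\mathrm{top}} \otimes \CC v_{\lambda+\mu t_0}$, and the odd currents of the $\beta\gamma$- and $bc$-systems glue these factors into a single finite-dimensional irreducible $\sl(n|m)$-module, which is therefore of highest weight. Using the conventions of Section \ref{sec:slnm} I would match this highest weight against $r\omega_{n+m-1}$; the locality relation $\lambda\mu = z - \frac{s}{m} + \frac{a}{n}$ with $z \in \ZZ$ gives
\[
n(\mu^2 s - \lambda\mu) = n\Big(\tfrac{m-n}{mn}s - z + \tfrac{s}{m} - \tfrac{a}{n}\Big) = s - a - nz \in \ZZ,
\]
so $r = n(\mu^2 s - \lambda\mu)$ is an integer and agrees with the parameter of \eqref{eqn:convention}. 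Since $\mathcal{F}_A(M_{s,a,\lambda})$ is simple, lower bounded, and generated by this highest-weight top space, it must be $L_1(n(\mu^2 s - \lambda\mu)\omega_{n+m-1})$.

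The main obstacle will be the bookkeeping in the minimization: the absolute value $|s+t|$ and the $n$-periodic residue $\overline{a+t}$ force a case analysis on the sign of $s+t$ and on its residue class, and one must check that the minimizer yields exactly the weight $(s-a-nz)\omega_{n+m-1}$ and not a Weyl-shifted or conjugate weight. Verifying that the free-field grading of the top space matches the affine highest weight under the identifications of Section \ref{sec:slnm}, and that negative $r$ corresponds to the contragredient modules recorded in \eqref{eqn:convention}, is the delicate part that the Appendix computation carries out.
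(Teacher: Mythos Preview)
Your approach is essentially the paper's: restrict the induced module, minimize the conformal weights $h_t$ (the Appendix computation), read off the highest weight of the top space via the dictionary of Section~\ref{sec:slnm}, and conclude by simplicity. Two refinements worth noting: first, the minimum of $h_t$ is attained not at a single $t_0$ but at a whole range of $t$'s (one for each residue class mod $n$, as the Appendix shows), so the top space as an $\mathfrak{sl}_m\otimes\mathfrak{sl}_n\otimes\mathfrak{h}$-module is the direct sum \eqref{decomp1} rather than a single tensor factor---the highest weight vector sits in one extremal summand, and the odd generators move between summands; second, for the case $s-a-zn\leq 0$ the paper avoids repeating the case analysis by invoking duality, using that $M_{s,a,\lambda}^*\cong M_{-s,-a,-\lambda}$ and that induction commutes with taking contragredients (\cite[Lem.~1.16(2)]{KO}, \cite[Prop.~2.77]{CKM1}), which is cleaner than redoing the minimization.
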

\begin{proof}
If $s-a-zn\geq 0$, the top level space of $\mathcal{F}_A(M_{s,a,\lambda})$ as $\mathfrak{sl}_m \otimes\mathfrak{sl}_n\otimes  \mathfrak{h}$-module for a rank 1 abelian Lie algebra $\mathfrak{h}$ generated by the vector $y$ is
\begin{equation}\label{decomp1}
\bigoplus_{\tiny \begin{array}{c}(z+1)n-s \leq i\\-a< i \leq {n-a}\end{array}}E^{(s+i-(z+1)n)\omega_1}\otimes E^{\omega_{a+i}}\otimes \mathbb{C}_{\lambda+\mu i-\mu (z+1)n}.
\end{equation}
The highest weight of this module is $(\lambda+\mu (n-a)-\mu (z+1)n)\nu_n + (s+n-a-(z+1)n) \nu_{n+1}$, which equals $(s-a-zn)\omega_{n+m-1}=n(\mu^2s-\lambda\mu)\omega_{n+m-1}$ (see Section \ref{sec:slnm} for our conventions on weights), so that we conclude
\[
\mathcal{F}_A(M_{s,a,\lambda}) \cong L_1(n(\mu^2s-\lambda\mu)\omega_{n+m-1}).
\]
The case $s-a-zn\leq 0$ follows since the dual of $M_{s,a,\lambda}$ is $M_{-s,-a,-\lambda}$ and since the induction of the dual is the dual of the induced module (\cite[Lem.~1.16(2)]{KO}, cf. \cite[Prop.~2.77]{CKM1}).
\end{proof}

\begin{remark}
Consider the special case $\lambda \mu = -\frac{s}{m}$ and $a=0$, then we have
\[
\mathcal{F}_A(M_{s,0,- \frac{s}{\mu m}}) \cong L_1( s \omega_{n+m-1})
\]
and since the induction functor is monoidal we obtain the fusion rule
\begin{equation}
\begin{split}
 L_1( s \omega_{n+m-1})  \boxtimes L_1( s' \omega_{n+m-1})
\cong & \mathcal{F}_A(M_{s,0,- \frac{s}{\mu m}}) \boxtimes \mathcal{F}_A(M_{s',0,- \frac{s'}{\mu m}})
=  \mathcal{F}_A(M_{s,0,- \frac{s}{\mu m}} \boxtimes M_{s',0,- \frac{s'}{\mu m}}) \\
=& \mathcal{F}_A(M_{s+s',0,- \frac{s+s'}{\mu m}})
= L_1( (s+s') \omega_{n+m-1}).
\end{split}
\end{equation}
Note that the conformal weight $h_s$ of the top level of $L_1( s \omega_{n+1})$  is
\[
h_s = \frac{s(s+m-n)}{2(m-n)}.
\]
\end{remark}

As a consequence of Theorem \ref{kl} and Proposition \ref{simpleind},
\begin{cor}We have
\begin{itemize}
\item[(1)]The category $KL_1^{ss}(\sl(n|m))$ is semisimple with simple objects $L_1(r\omega_{n+m-1})$ (see the conventions \eqref{eqn:convention} and \eqref{omega}) for $r \in \ZZ$.
\item[(2)]The category $KL_1^{ss}(\sl(n|m))$ has a rigid braided tensor category structure with fusion rules
\[
L_1(r\omega_{n+m-1}) \btimes L_1(s\omega_{n+m-1}) = L_1((r+s)\omega_{n+m-1})
\]
for $r, s \in \ZZ$.
\end{itemize}
\end{cor}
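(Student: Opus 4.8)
The plan is to assemble the statement from the two preceding results, Theorem \ref{kl} and Proposition \ref{simpleind}, together with the locality criterion of Theorem \ref{sum1}(4). By Theorem \ref{kl} the category $KL_1^{ss}(\sl(n|m))$ is already semisimple and every simple object is of the form $\cF_A(X)$ for some simple $X$ in $\cC$, necessarily one of the $M_{s,a,\lambda} = W_{-1}(\lambda_s)\otimes L_1(\om_a)\otimes M_y(1,\lambda)$. Since the objects of $KL_1^{ss}(\sl(n|m))$ are genuine $A$-modules, i.e.\ \emph{local} objects of $(\cC^{\oplus})_A$, I would first isolate those $(s,a,\lambda)$ for which $\cF_A(M_{s,a,\lambda})$ is local. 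This is exactly the monodromy condition of Theorem \ref{sum1}(4), which via the balancing axiom reduces to the congruence $\lambda\mu - \frac{a}{n} + \frac{s}{m} \in \ZZ$ derived just before Proposition \ref{simpleind}.

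Next I would invoke Proposition \ref{simpleind} to identify each such local simple induced object with $L_1(n(\mu^2 s - \lambda\mu)\om_{n+m-1})$. A short computation using $\mu^2 = \frac{m-n}{mn}$ and writing the locality condition as $\lambda\mu + \frac{s}{m} = \frac{a}{n} + z$ with $z \in \ZZ$ shows that the exponent simplifies to $n(\mu^2 s - \lambda\mu) = s - a - nz \in \ZZ$, so every simple object indeed carries a label $r \in \ZZ$. To see that the labelling is onto, I would use the special case recorded in the Remark following Proposition \ref{simpleind}: taking $a = 0$ and $\lambda = -\tfrac{s}{\mu m}$ satisfies the locality condition and yields $\cF_A(M_{s,0,-s/(\mu m)}) \cong L_1(s\om_{n+m-1})$, so as $s$ ranges over $\ZZ$ one obtains every $L_1(r\om_{n+m-1})$. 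Distinct $r$ yield non-isomorphic simple objects, since $r$ is recovered from the underlying $\widehat{\sl(n|m)}$-module (a highest-weight module for $r \geq 0$ and its dual for $r < 0$, per the convention \eqref{eqn:convention}). This establishes part (1).

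For part (2) the fusion rules follow from the fact that, restricted to the subcategory of objects inducing to local modules, $\cF_A$ is a braided tensor functor (Theorem \ref{sum2}(5), Theorem \ref{sum1}(5)). Hence, using the special representatives above together with the fusion rules of $\cC$---namely \eqref{fusionrules} in $\cC(m)$, the $L_1(\sl_n)$ fusion rules, and the additive Fock-module fusion in $M_y(1)$---I would compute
\[
L_1(r\om_{n+m-1}) \btimes L_1(s\om_{n+m-1}) \cong \cF_A\big(M_{r,0,-r/(\mu m)} \btimes M_{s,0,-s/(\mu m)}\big) \cong \cF_A\big(M_{r+s,0,-(r+s)/(\mu m)}\big) \cong L_1((r+s)\om_{n+m-1}),
\]
exactly as in the Remark. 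These fusion rules exhibit each $L_1(r\om_{n+m-1})$ as an invertible object with inverse $L_1(-r\om_{n+m-1})$, i.e.\ a simple current; every simple current is rigid (see \cite[Sec.~2.3]{CGR}), and since $KL_1^{ss}(\sl(n|m))$ is semisimple and generated by these, the whole category is rigid braided tensor.

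The main obstacle is not in the assembly itself, which is essentially formal once Theorem \ref{kl} and Proposition \ref{simpleind} are in hand, but in the bookkeeping that feeds Proposition \ref{simpleind}: correctly matching the top space of the induced module (the decomposition \eqref{decomp1}) with the highest weight $r\om_{n+m-1}$ of an affine $\sl(n|m)$-module under the conventions of Section \ref{sec:slnm}. At the level of the corollary the only genuinely delicate point I anticipate is verifying surjectivity of the parametrization $r = s - a - nz$ compatibly with the locality congruence, which the special-case representatives with $a = 0$ resolve cleanly.
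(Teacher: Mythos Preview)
Your proposal is correct and follows essentially the same route as the paper, which simply states the corollary as a consequence of Theorem \ref{kl} and Proposition \ref{simpleind} together with the fusion-rule computation already carried out in the Remark following Proposition \ref{simpleind}. You have accurately fleshed out the implicit steps: the locality congruence picks out the admissible $(s,a,\lambda)$, Proposition \ref{simpleind} shows $n(\mu^2 s - \lambda\mu) = s-a-nz \in \ZZ$, surjectivity comes from the representatives $M_{s,0,-s/(\mu m)}$, and rigidity follows because the simple objects are simple currents.
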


\begin{remark}
If $n = m$, from Proposition \ref{decomp3}, the vertex operator superalgebra $L_1(\mathfrak{psl}(n|n))$ is a simple current extension of $C(n)\otimes L_1(\mathfrak{sl}_n)$.
Let
\[
M_{s,a} = W_{-1}(\lambda_s)\otimes L_1(\omega_a)
\]
be a simple $C(n)\otimes L_1(\mathfrak{sl}_n)$-module. By similar calculation $\mathcal{F}_A(M_{s,a})$ is local if and only if $s \equiv a$ $({\rm mod}\; n)$. Thus for any such simple objects $M_{s,a}$, $\mathcal{F}_A(M_{s,a}) = L_1(\mathfrak{psl}(n|n))$.
\end{remark}
\begin{cor} For $n\neq m$ and as an $M_h(1)\otimes L_1(\sl(n|m))$-module,
$$\beta\gamma^{\otimes m} \otimes bc^{\otimes n} \cong \bigoplus_{r\in \ZZ} M_h(1, r) \otimes L_1(r\om_{n+m-1}).$$
\end{cor}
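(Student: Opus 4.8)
The plan is to decompose $\beta\gamma^{\otimes m}\otimes bc^{\otimes n}$ first under the four mutually commuting subalgebras $C(m)$, $L_1(\sl_n)$, and the two Heisenberg algebras generated by $h_{\beta\gamma}$ and $h_{bc}$, and then to regroup the result by the charge of the central Heisenberg $M_h(1)$. Concretely, I would start from the bigraded decomposition
\[
\beta\gamma^{\otimes m}\otimes bc^{\otimes n}=\bigoplus_{s,t\in\ZZ}W_{-1}(\lambda_s)\otimes L_1(\omega_{\overline{-t}})\otimes M_{h_{\beta\gamma}}(1,s)\otimes M_{h_{bc}}(1,t),
\]
which comes from the decompositions of $\beta\gamma^{\otimes m}$ under $h_{\beta\gamma}(0)$ and of $bc^{\otimes n}$ under $h_{bc}(0)$. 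Applying the same linear change of Heisenberg generators used just before Proposition \ref{voaext1}, I rewrite $M_{h_{\beta\gamma}}(1,s)\otimes M_{h_{bc}}(1,t)\cong M_h(1,s+t)\otimes M_x(1,ns+mt)$, where $h=h_{bc}+h_{\beta\gamma}$ and $x=mh_{bc}+nh_{\beta\gamma}$.

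Next I would collect terms by the eigenvalue $r=s+t$ of $h(0)$, which is an integer on the whole space. Since $M_h(1)$ acts only on the $M_h(1,s+t)$ factor, the space splits as $\bigoplus_{r\in\ZZ}M_h(1,r)\otimes V[r]$, where $V[r]$ is the space of relative vacuum vectors of $h$-charge $r$; by Theorem \ref{CKLR} it is a module for the commutant $A:=L_1(\sl(n|m))={\rm Com}(M_h(1),\beta\gamma^{\otimes m}\otimes bc^{\otimes n})$. Setting $t=r-s$ and converting the $M_x$-Fock modules to the normalized $M_y$-Fock modules, the restriction of $V[r]$ to $C(m)\otimes L_1(\sl_n)\otimes M_y(1)$ becomes $\bigoplus_{s\in\ZZ}M_{s,\overline{s-r},\mu s+\lambda_0}$ with $\lambda_0=-\tfrac{m}{m-n}\mu r$. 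The purpose of this bookkeeping is to recognize this precisely as $\mathcal G_A(\mathcal F_A(M_{0,\overline{-r},\lambda_0}))$, via the restriction formula $\mathcal G_A(\mathcal F_A(M_{s_0,a_0,\lambda}))\cong\bigoplus_{i\in\ZZ}M_{s_0+i,a_0+i,\lambda+\mu i}$ together with the fusion rules \eqref{fusionrules}.

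Finally I would pin down the $A$-module structure. By Proposition \ref{simpleind}, whose hypotheses reduce here to the identities $\lambda_0\mu=-r/n$ and $n(\mu^2\cdot 0-\lambda_0\mu)=r$, the induced object $\mathcal F_A(M_{0,\overline{-r},\lambda_0})$ is the simple module $L_1(r\om_{n+m-1})$. Frobenius reciprocity \eqref{eqn:fro-rec}, applied to the embedding of the $s=0$ summand $M_{0,\overline{-r},\lambda_0}\hookrightarrow\mathcal G_A(V[r])$, then produces a nonzero, hence injective, $A$-module map $L_1(r\om_{n+m-1})\to V[r]$. Comparing the two restrictions, which are identical multiplicity-free direct sums of the simple objects $M_{s,\overline{s-r},\mu s+\lambda_0}$, Schur's lemma shows this map is an isomorphism on each isotypic component and is therefore surjective, so $V[r]\cong L_1(r\om_{n+m-1})$ as $A$-modules; this gives the claimed decomposition. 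The only genuinely delicate point is this last identification of the module structure on the multiplicity space $V[r]$, as opposed to merely its restriction to the subalgebra; everything preceding it is the Heisenberg change of variables and the Fock-charge bookkeeping already rehearsed in Section 6.1 and in Proposition \ref{simpleind}.
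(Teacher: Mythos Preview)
Your proposal is correct and follows essentially the same route as the paper: decompose $\beta\gamma^{\otimes m}\otimes bc^{\otimes n}$ under the four commuting subalgebras, pass from $(h_{\beta\gamma},h_{bc})$ to $(h,x)$ (equivalently $(h,y)$), regroup by the $h$-charge $r=s+t$, and recognize the multiplicity space of $M_h(1,r)$ as $\mathcal G_A(\mathcal F_A(\,\cdot\,))$ for a simple object of $\mathcal C$, which Proposition~\ref{simpleind} identifies with $L_1(r\omega_{n+m-1})$.

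The only real differences are cosmetic or expository. The paper chooses the base point $M_{r,0,-\frac{r}{m\mu}}$ for the induced module while you choose $M_{0,\overline{-r},\lambda_0}$; these differ by tensoring with a summand of $A$ and hence induce to the same object. More substantively, the paper simply writes the chain of isomorphisms in $\mathcal C^{\oplus}$ and asserts the $A$-module identification at the end, whereas you make that last step explicit via Frobenius reciprocity and Schur's lemma (using that the restriction is a multiplicity-free sum of simples). Your version is a bit more careful here; the paper is relying implicitly on the semisimplicity established in Theorem~\ref{kl} (or equivalently Theorem~\ref{thm:semisimple}) to pass from ``same restriction'' to ``same $A$-module''.
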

\begin{proof}
This statement follows from the decomposition of $\beta\gamma^{\otimes m} \otimes bc^{\otimes n}$ in $\mathcal C$:
\begin{equation}\nonumber
\begin{split}
\beta\gamma^{\otimes m} \otimes bc^{\otimes n} &\cong \bigoplus_{s,t\in \ZZ}W_{-1}(\lambda_s)\otimes L_1(\omega_{\overline{-t}}) \otimes M_h(1,s+t)\otimes M_x(1, ns+mt) \\
&\cong \bigoplus_{s,t\in \ZZ}W_{-1}(\lambda_s)\otimes L_1(\omega_{\overline{-t}}) \otimes M_h(1,s+t)\otimes M_y\left(1, -\frac{ns+mt}{mn\mu}\right) \\
&\cong \bigoplus_{r \in \ZZ} M_h(1,r) \otimes \bigoplus_{s \in \mathbb Z}  W_{-1}(\lambda_s)\otimes L_1(\omega_{\overline{s-r}}) \otimes  M_y\left(1, -\frac{ns+m(r-s)}{mn\mu}\right) \\
&\cong \bigoplus_{r \in \ZZ} M_h(1,r)   \otimes \bigoplus_{s \in \mathbb Z} M_{s, s-r,  -\frac{r}{n\mu} +s\mu}  \\
&\cong \bigoplus_{r \in \ZZ} M_h(1,r)   \otimes  \mathcal G_A\left(\mathcal F_A \left( M_{r, 0,  -\frac{r}{m\mu}}\right)\right)  \\
&\cong \bigoplus_{r \in \ZZ} M_h(1,r)   \otimes  \mathcal G_A\left(L_1(r\om_{n+m-1})\right),
\end{split}
\end{equation}
so that
$$\beta\gamma^{\otimes m} \otimes bc^{\otimes n} \cong \bigoplus_{r\in \ZZ} M_h(1, r) \otimes L_1(r\om_{n+m-1})$$
as $M_h(1)\otimes L_1(\sl(n|m))$-module.
\end{proof}

\section{Appendix}
\subsection{Change of basis for the root system of $\mathfrak{sl}(n|m)$}\label{sec:slnm}
For the Lie superalgebra $\mathfrak{sl}(n|m)$, there is no unique simple root system and we choose a distinguished simple root system, i.e. we will only have one odd positive simple root. This is easiest described by embedding it into $\mathbb Z^{n+m} = \mathbb Z \epsilon_1 \oplus \dots \oplus \mathbb Z \epsilon_n \oplus  \mathbb Z \delta_1 \dots \oplus \mathbb Z\delta_m$
with norm $\epsilon_i\epsilon_j = \delta_{i, j}$ and $\delta_i\delta_j = -\delta_{i, j}$ and elements $a_i \epsilon_1 + \dots a_n \epsilon_n + b_1\delta_1 + \dots b_m \delta_m$ of the root lattice satisfy $a_1+ \dots + a_n +b_1 + \dots + b_m=0$.
For example, we denote $\Delta_{\overline{0}}^+ \cup \Delta_{\overline{1}}^+$
as a system of positive roots, and denote  by $\Pi$ the corresponding set of positive roots and our choice is
\begin{equation}
\begin{split}
\Delta_{\overline{0}}^+ &= \{\epsilon_i-\epsilon_j, \delta_i-\delta_j|i < j\},
\qquad\qquad
\Delta_{\overline{1}}^+ = \{\epsilon_i-\delta_j\},\\
\Pi &=  \{ \alpha_1, \dots, \alpha_{n+m-1}\}, \\
  \alpha_i :&= \begin{cases}  \epsilon_i-\epsilon_{i+1}, & \quad i=1, \dots, n-1 \\
\epsilon_n-\delta_1, & \quad i=n \\  \delta_{m+n-i}-\delta_{m+n+1-i}, & \quad i=n+1, \dots, m+n-1 \end{cases},
\end{split}
\end{equation}
see for example \cite{K1} for a reference for the Lie superalgebra $\sl(n|m)$ and its root system.

Let us set $\epsilon:= \epsilon_1 + \dots + \epsilon_n$ and $\delta:= \delta_1+ \dots  + \delta_m$.
The corresponding fundamental weights are
\begin{equation}\nonumber
\begin{split}
\omega_i &= \frac{1}{n-m} \left( (n-m-i) (\epsilon_1 +\cdots \epsilon_i)  - i(\epsilon_{i+1} + \cdots + \epsilon_n -\delta_1 - \cdots - \delta_m) \right)\\
&= \frac{i}{n-m}(\delta-\epsilon) + \epsilon_1 + \dots + \epsilon_i.
\end{split}
\end{equation}
for $i = 1, \dots, n-1$,
\[
\omega_n = \frac{1}{n-m}(-m(\epsilon_1 + \cdots + \epsilon_{n}) + n(\delta_1 +\cdots + \delta_m)) = \frac{-m\epsilon+n\delta}{n-m} = \frac{n}{n-m}(\delta-\epsilon) + \epsilon
\]
and
\begin{equation}\label{omega}
\omega_{n+j} =  \frac{j}{n-m}(\delta-\epsilon)  + \delta_{m-j+1} + \dots +  \delta_m
\end{equation}
for $j = 1, \dots, m-1$.
The positive roots of the even subalgebra are expressed in terms of ours as
\[
\{ \alpha_1, \dots, \alpha_{n-1}, \mu\omega_n, \alpha_{n+1}, \dots, \alpha_{n+m-1} \},
\]
where $\mu = \sqrt{\frac{m-n}{mn}}$ as in Section 6.1 and the corresponding fundamental weights are
\begin{equation}
\begin{split}
\nu_i &= -\frac{i}{n}\epsilon + \epsilon_1 + \dots \epsilon_i = \omega_i - \frac{i}{n} \omega_n\\
\nu_n &= -\mu \omega_n \\
\nu_{n+j} &=  \frac{j}{m}\delta - (\delta_1 + \dots \delta_j) = \omega_{n+m-j} -\frac{j}{m} \omega_n.
\end{split}
\end{equation}
Thus weight labels translate as
\begin{equation*}
\begin{split}
\Lambda &= \sum_{i = 1}^{n+m-1}a_i\nu_i  = \sum_{i = 1}^{n+m-1}b_i\omega_i \qquad \text{with} \\
b_i &= a_i, \qquad
b_n = -\mu a_n -\frac{1}{n} \sum_{i=1}^{n-1} ia_i - \frac{1}{m} \sum_{j=1}^{m-1} ja_j\qquad \text{and} \qquad
b_{n+j} = a_{n+j}
\end{split}
\end{equation*}
for $i=1, \dots, n-1$ and $j=1, \dots, m-1$.

\subsection{Top level of the induced module}
\begin{lemma}\label{lemma}
The conformal gradings of the induced modules $\mathcal{F}(M_{a, s,\lambda})$ are bounded from below and the bound is given by
\begin{equation*}
\frac{n}{2}(z+1)^2 - n(\frac{a}{n}+\frac{1}{2}+z)(z+1) + \frac{a(n-a)}{2n} + \frac{s(s+m)}{2m} + \frac{\lambda^2}{2}
\end{equation*}
if  $-zn-a+s$ is nonnegative;
the bound is
\begin{equation*}
\frac{n}{2}z^2 - zn(\frac{a}{n}-\frac{1}{2}+z) + \frac{a(n-a)}{2n} + \frac{s(s-m)}{2m} + \frac{\lambda^2}{2}
\end{equation*}
if $-zn-a+s$ is nonpositive.
\end{lemma}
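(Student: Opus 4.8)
The plan is to push the induced module back down to the subalgebra $C(m)\otimes L_1(\sl_n)\otimes M_y(1)$ and read off the conformal grading summand by summand. By the computation preceding the statement, $\cG_A(\cF_A(M_{s,a,\lambda}))\cong\bigoplus_{t\in\ZZ}M_{s+t,\overline{a+t},\lambda+\mu t}$, so the lowest conformal weights occurring in $\cF_A(M_{s,a,\lambda})$ are exactly the numbers $g(t):=h_{s+t,\overline{a+t},\lambda+\mu t}$, $t\in\ZZ$. Hence the conformal grading is bounded below precisely when $\inf_t g(t)>-\infty$, and the asserted bound is this infimum. The whole lemma thus reduces to minimizing the single function $g\colon\ZZ\to\RR$.

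First I would record the explicit shape of $g$. Using the given formula for $h_{s,a,\lambda}$,
\[
g(t)=\frac{|s+t|\,(m+|s+t|)}{2m}+\frac{\overline{a+t}\,(n-\overline{a+t})}{2n}+\frac{(\lambda+\mu t)^2}{2}.
\]
The decisive observation is that the $t^2$-coefficient cancels: on any range where $s+t$ keeps a fixed sign and $a+t$ stays in a fixed residue block, the three quadratic contributions have leading coefficients $\tfrac1{2m}$, $-\tfrac1{2n}$ and $\tfrac{\mu^2}{2}=\tfrac12(\tfrac1n-\tfrac1m)$, which sum to zero. So $g$ is piecewise linear. I would then compute its discrete derivative; inserting the locality relation $\lambda\mu+\tfrac sm-\tfrac an=z\in\ZZ$, the differences collapse to the integers
\[
g(t+1)-g(t)=\begin{cases}1+z+\lfloor (a+t)/n\rfloor,& s+t\ge 0,\\ z+\lfloor (a+t)/n\rfloor,& s+t\le -1.\end{cases}
\]
Since $\lfloor (a+t)/n\rfloor$ is non-decreasing in $t$ and the crossing of $s+t=0$ only raises the difference, $g$ is discretely convex; in particular it is bounded below and attains its minimum.

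It then remains to locate the minimizing block and evaluate. The difference above is negative for $t$ small and positive for $t$ large, so the minimum sits where it changes sign, and this is governed by whether the kink $s+t=0$ lies to the left or right of the residue block on which the relevant slope vanishes. This is exactly the dichotomy $-zn-a+s\ge 0$ versus $-zn-a+s\le 0$. The minimal value coincides with the value of the pertinent affine piece of $g$ (which has slope $0$, hence is constant), so I would evaluate that affine function at the convenient integer $t=-(z+1)n$ in the first case and $t=-zn$ in the second — both chosen so that the block formula for the residue gives $\overline{a+t}=a$, producing the $\tfrac{a(n-a)}{2n}$ term — then expand and collect to obtain the two displayed expressions. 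As a consistency check, and to see that the two cases glue along their common boundary, one verifies directly that the difference of the two closed forms equals $-zn-a+s$, so they agree exactly when $s=a+zn$.

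The main obstacle is bookkeeping rather than conceptual: two independent piecewise features — the absolute value $|s+t|$ coming from the $L_{-1}(\sl_m)$ factor and the sawtooth $\overline{a+t}$ coming from the $L_1(\sl_n)$ factor — interact, and the minimizing block may straddle the line $s+t=0$. Consequently some care is needed to confirm that evaluating the slope-zero affine piece at $t=-(z+1)n$ (resp.\ $-zn$) really returns the \emph{global} minimum, and not merely a value on a branch that is not actually realized as a flat piece of $g$; here the convexity established above, together with the check that the wrong-case formula lies strictly below the true minimum, is what settles the matter. The quadratic cancellation is precisely what reduces all of this to a finite, if slightly tedious, linear computation.
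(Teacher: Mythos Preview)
Your proposal is correct and the key identities check out: the quadratic cancellation $\tfrac{1}{2m}-\tfrac{1}{2n}+\tfrac{\mu^2}{2}=0$ does make $g$ piecewise affine, your discrete derivative formula
\[
g(t+1)-g(t)=\begin{cases}1+z+\lfloor(a+t)/n\rfloor,& s+t\ge 0,\\ z+\lfloor(a+t)/n\rfloor,& s+t\le -1,\end{cases}
\]
is right (the residue-jump case and the sign-crossing at $s+t=0$ both fit), and evaluating the $s+t\ge 0$ affine branch at $t=-(z+1)n$ indeed reproduces the displayed minimum after using $\lambda\mu+\tfrac{s}{m}-\tfrac{a}{n}=z$; the consistency check $(\text{Case 1})-(\text{Case 2})=s-a-zn$ is also correct.

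The paper takes a different route. It writes $t=rn+\bar t$ with $0\le \bar t<n$ and, for each fixed residue $\bar t$, treats $h_{s+t,\overline{a+t},\lambda+\mu t}$ as a genuine quadratic in $r$ with positive leading coefficient $\tfrac{n}{2}$, locates the nearest-integer minimizer $r$ in terms of $\bar t+a$, and then observes that the resulting minimum is independent of $\bar t$. Your approach replaces this case-by-case residue analysis with a single convexity argument in $t$: the quadratic-in-$r$ structure is traded for piecewise linearity in $t$, and the four subcases in the paper collapse to the monotonicity of $\lfloor(a+t)/n\rfloor$ plus the unit jump at $s+t=0$. This is more conceptual and avoids verifying the $\bar t$-independence by hand; the paper's approach, on the other hand, is more explicit about exactly which summands $M_{s+t,\overline{a+t},\lambda+\mu t}$ realize the minimum, which is what feeds directly into identifying the top level in Proposition~\ref{simpleind}. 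The one place where your write-up could be tightened is the sentence about the ``wrong-case formula lying strictly below the true minimum'': what you actually need (and already have) is that convexity guarantees the minimum is attained at $t=-zn-a$, which lies in the $s+t\ge 0$ regime precisely when $s-zn-a\ge 0$, so the slope-zero affine branch of $g_+$ agrees with $g$ there and its value at $t=-(z+1)n$ is legitimate.
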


The proof will be done by analyzing two cases. Let $t = rn + \bar{t}$ for $0\leq \bar{t} < n$ and $r \in \ZZ$.

{\bf Case 1}:
If $s+t \geq 0$,
\begin{align*}
&h_{s+t, \overline{a+t}, \lambda+\mu t} \\
=& \frac{(\overline{a+t})(n-\overline{a+t})}{2n} + \frac{(s+t)(s+t+m)}{2m} + \frac{(\lambda + \mu t)^2}{2}\\
=& \frac{(\overline{a+t})(n-\overline{a+t})}{2n} + \frac{(s+rn+\overline{t})(s+rn+\overline{t}+m)}{2m} + \frac{(\lambda + \mu (rn+\overline{t}))^2}{2}\\
=& \bigg(\frac{\mu^2n^2}{2}+\frac{n^2}{2m}\bigg)r^2 + \bigg(n\big(\mu^2+\frac{1}{m}\big)\overline{t}+\frac{2ns+mn}{2m}+\lambda\mu n\bigg)r + \mbox{const}.
\end{align*}
Since $\mu^2+\frac{1}{m} = \frac{1}{n}$ is always positive, the conformal weights $\{h_{s+t, \overline{a+t}, \lambda+\mu t}|t \in \ZZ\}$ always have a lower bound.
The minimal values attain when $r$ is an integer that is closest to
\[
-\frac{\overline{t}}{n} - \frac{s}{m} - \frac{1}{2} - \lambda\mu = -z-\frac{\overline{t}+a}{n} - \frac{1}{2}.
\]
We list the minimal values for all $\overline{t}$ as follows:
\begin{itemize}
\item If $\overline{t} + a = 0$, i.e., $a = 0, \overline{t} = 0$, then $r = -z$ or $-z-1$.
\item If $1 \leq \overline{t} + a \leq n-1$, then $r = -z-1$.
\item If $\overline{t} + a = n$, then $r = -z-1$ or $-z-2$.
\item If $\overline{t} + a \geq n+1$, then $r = -z-2$.
\end{itemize}
The minimum are all the same (independent of $\overline{t}$):
\begin{equation}\label{min}
\frac{n}{2}(z+1)^2 - n(\frac{a}{n}+\frac{1}{2}+z)(z+1) + \frac{a(n-a)}{2n} + \frac{s(s+m)}{2m} + \frac{\lambda^2}{2}
\end{equation}
It is easy to see \eqref{min} attains when $-zn-a+s$ is nonnegative, i.e. $\mu^2 s -\lambda\mu \geq 0$.

{\bf Case 2}:
If $s+t < 0$,
\begin{align*}
&h_{s+t,\overline{a+t}, \lambda+\mu t} \\
=& \frac{(\overline{a+t})(n-\overline{a+t})}{2n} + \frac{(s+t)(s+t-m)}{2m} + \frac{(\lambda + \mu t)^2}{2}\\
=& \bigg(\frac{\mu^2n^2}{2}+\frac{n^2}{2m}\bigg)r^2 + \bigg(n\big(\mu^2+\frac{1}{m}\big)\overline{t}+\frac{2ns-mn}{2m}+\lambda\mu n\bigg)r + \mbox{const}.
\end{align*}
The minimal values attain when $r$ is an integer that is closest to
\[
-\frac{\overline{t}}{n} - \frac{s}{m} + \frac{1}{2} - \lambda\mu = -z-\frac{\overline{t}+a}{n} + \frac{1}{2}.
\]
We list the minimal values for all $\overline{t}$ as follows:
\begin{itemize}
\item If $\overline{t} + a = 0$, i.e., $a = 0, \overline{t} = 0$, then $r = -z$ or $-z+1$.
\item If $1 \leq \overline{t} + a \leq n-1$, then $r = -z$.
\item If $\overline{t} + a = n$, then $r = -z-1$ or $-z$.
\item If $\overline{t} + a \geq n+1$, then $r = -z-1$.
\end{itemize}
The minimum are all the same (independent of $\overline{t}$):
\begin{equation}\label{min1}
\frac{n}{2}z^2 - zn(\frac{a}{n}-\frac{1}{2}+z) + \frac{a(n-a)}{2n} + \frac{s(s-m)}{2m} + \frac{\lambda^2}{2}.
\end{equation}
Similar to the previous case, \eqref{min1} attains when $-zn-a+s$ is nonpositive.

\bibliographystyle{alpha}

\end{document}